\definecolor{halfgray}
{gray}{0.55}%chapter numbers will be semi
\definecolor{webgreen}
{rgb}{0,0.4,0}
\definecolor{webbrown}
{rgb}{.8,0.1,0.1}
\definecolor{red}
{rgb}{1,0,0}
\newcommand \R {{ \mathbb R}}
\def\C{{\mathbb C}}
\newcommand \Q {{ \mathbb Q}}
\newcommand \Z {{ \mathbb Z}}
\newcommand \N {{ \mathbb N}}
\newcommand \T {{ \mathbb T}}
\newcommand \z {{ \mathfrak h}}
\newcommand*{\diff}{\mathop{}\!\mathrm{d}}
\newcommand{\one}{{\rm 1\mskip-4mu l}}
\newcommand{\norm}[1]{\left\lVert#1\right\rVert}
\newcommand \re {{%
\operatorname{Re}
}}
\newcommand{\SL}{%
\operatorname{SL}
}
\newtheorem{theorem}{Theorem}[section]
\newtheorem {lemma}[theorem]{Lemma}
\newtheorem {proposition}[theorem]{Proposition}
\newtheorem{corollary}[theorem]{Corollary}
\newtheorem{remark}[theorem]{Remark}
\newtheorem{definition}[theorem]{Definition}
\date{\today}
\author{Artur Avila}
\address{Universit\"at Z\"urich, 
Institut f\"ur Mathematik\\
Winterthurerstrasse 190, 
CH-8057 Z\"urich, Switzerland\\ and
}
\address{IMPA, Estrada Dona Castorina 110, Rio de Janeiro, Brazil\\}
\email{artur.avila@math.uzh.ch\\}
\author{Giovanni Forni}
\address{Department of Mathematics\\
Mathematics Building\\
University of Maryland\\
College Park, MD 20742-4015, 
USA\\}
\email{gforni@math.umd.edu\\}
\author{Davide Ravotti}
\address{ %Universität Zürich, Institut für Mathematik, Y27-K36\\ Winterthurerstrasse 190, CH-8057 Zürich
Monash University, School of Mathematics \\ Clayton Campus, 3800 Victoria, Australia
}
\email{davide.ravotti@gmail.com\\}
\author{Corinna Ulcigrai}
\address{School of Mathematics \\ University of Bristol \\ BS8 1TW Bristol, United Kingdom\\ and }
\address{Universit\"at Z\"urich, 
Institut f\"ur Mathematik\\
Winterthurerstrasse 190, 
CH-8057 Z\"urich, Switzerland
}
\email{corinna.ulcigrai@bristol.ac.uk\\}  
 \title%[Mixing for Smooth Time-Changes of  Nilflows]
 {Mixing for Smooth Time-Changes of  General Nilflows}
\begin{document}

 \begin{abstract}
 We consider completely irrational nilflows on any  nilmanifold of step at least $2$. We show that there exists a dense set of smooth time-changes such that any time-change in this class  which is not {measurably trivial} gives rise to a \emph{mixing} nilflow. 
This in particular reproves and generalizes to any nilflow (of step at least $2$) the main result proved in \cite{AFU} for the special class of Heisenberg (step $2$)  nilflows, and later generalized in~\cite{Rav2} to a class of nilflows of arbitrary step which are isomorphic to suspensions of higher-dimensional linear toral skew-shifts.  
  \end{abstract}

\maketitle
\section{Introduction}

Dynamical systems can roughly be divided in three categories ({hyperbolic}, {elliptic} and {parabolic}) according to the speed of divergence (if any) of close orbits. A (non-singular) flow is called \emph{hyperbolic}  if nearby orbits diverge  exponentially in time. We say that the flow is \emph{parabolic} if there is divergence of nearby orbits, but this divergence happens at  \emph{subexponential} (usually \emph{polynomial}) speed, while the flow is called \emph{elliptic} if there is no divergence (or perhaps it is slower than polynomial). While there is a classical and well-developed theory of hyperbolic systems and also a systematic study of  elliptic ones, there is no general theory which describes the dynamics of parabolic flows and only classical and isolated examples are well-understood.   This paper contributes to our understanding of \emph{typical properties} of \emph{parabolic flows}. 

%\smallskip
%\subsubsection*{Classical examples of parabolic flows}
%1) Examples: -horocycle. Homogeneous world. In the homogeneous world, parabolic flows coincide with unipotent ones. Among those, unipotent on semi-simple can be though of direct generalization of horocycle. Other main category are %Nilflows on nilmanifolds. Basic example is Heisenberg (perhaps describe?). 
%Another important class of parabolic examples: also flows on surfaces, parabolic with singularities.

Perhaps the most studied example of a parabolic flow is given by the \emph{horocycle flow} on (the unit tangent bundle of) a compact negatively curved surface. In the context of \emph{homogeneous dynamics} (actions given by group multiplication on quotients of Lie groups), parabolic flows coincide with \emph{unipotent flows}. Horocycle flows  can be seen as the simplest example of unipotent flows on semi-simple Lie groups (given by the right action of upper triangular unipotent matrices in $\SL(2,\mathbb{R})$ on \emph{compact}\footnote{In the following, we will always tacitly assume that the lattice is \emph{cocompact}, even though some of the results also hold for finite volume surfaces.} quotients  $\Gamma\backslash \SL(2,\mathbb{R})$). Horocycle flows are the prototype of \emph{uniformly} (homogeneous) parabolic flows, since they display \emph{uniform}\footnote{Uniform here means that the speed of shearing of nearby orbits is uniformly bounded (on  initial points) from above and below.}  (and homogeneous) polynomial \emph{shearing} of nearby trajectories at every point.

In the context of \emph{area-preserving flows} on (higher genus) surfaces, another important class of parabolic flows  is  %(by many authors, including us) considered 
given by \emph{locally Hamiltonian} flows, which are \emph{smooth} flows which preserve a \emph{smooth} area-form\footnote{\emph{Linear flows} on translation surfaces (surfaces endowed with a flat metric with conical singularities)  are area-preserving but not smooth, due to the presence of singularities reached in finite time. They share many features of parabolic systems, but lack others. In this case the orbit divergence is entirely produced by the splitting of trajectories near the singularities. For this reason they can be considered as \emph{elliptic flows with singularities.}}.
A crucial feature in this context is the presence of saddle-type singularities, which create a  \emph{non-uniform} (and non-homogeneous) form of parabolic shearing of nearby orbits. These flows are hence an example of 
 \emph{non-uniformly} parabolic flows.

Another fundamental class of homogeneous  flows is given by \emph{nilflows}, or flows on (compact) quotients of \emph{nilpotent} Lie groups (\emph{nilmanifolds}). The \emph{prototype} example in this class are \emph{Heisenberg nilflows}. Let us recall that these are given by the action by right multiplication of a $1$-parameter subgroup of the \emph{Heisenberg group}, which can be seen as the group of $3\times 3$ upper triangular matrices, quotiented by a lattice (for example the subgroup of matrices of the same form, but with \emph{integer} entries). Nilflows have an elliptic factor, and they are definitely not \emph{uniformly} parabolic. In fact,  since  they have an isometric (central) direction they are an example of \emph{partially parabolic} flows.

%\textcolor{red} {Giovanni, ci viene un dubbio: ma tu li chiami partially parabolic perche' hanno una direzione isometrica o un fattore ellittico?} 
% with saddle-like singularities. In this case the orbit divergence is entirely produced by the splitting of trajectories near the singularities. Examples in this setting include 
% \textcolor{red}  { Probably it should be mentioned that smooth reparametrizations of parabolic flows
%are parabolic, and that singular reparametrizations of elliptic systems with or without singularities, can be parabolic. In fact, locally Hamiltonian flows on higher genus surfaces are  singular reparametrizations of elliptic flows with finitely many discontinuities of saddle-type.   Translation flows share many features of parabolic systems, but lack others, so I am not sure it is wise to advertise them as parabolic. In particular they are not mixing, and I believe it is important to our discussion that mixing should be a generic property of  parabolic systems. G.F.}

A natural and fundamental question in parabolic dynamics is hence which ergodic and spectral properties are \emph{generic} among  smooth parabolic flows. 
There is a large and quite extensive literature on the ergodic and spectral properties of these \emph{classical} parabolic examples, see for example \cite{Fu:uni, Ma2, Pa:hor,  FU, Bur, FF1, Str, BuFo, FoS1} for the horocycle flow, and more in general \cite{St} or \cite{AGH}, and the reference therein, for homogeneous parabolic flows; \cite{Ka:int, Keane, Ma:int, Ve:gau, AF:wea} or lecture notes such as \cite{Ma:rat, Yo, FoMa} and the references therein for translation flows;  \cite{Ul:wea, Ul2, Rav1, KKU, BK} among others for locally Hamiltonian flows.  These results do not show an entirely coherent picture and make the identification and description of characteristic \emph{parabolic} features uncertain. For example, while all smooth time-changes of horocycle flows are mixing and actually mixing of all orders (see \cite{Ma1, Ma2}), nilflows are never weakly mixing (see below). This  difference in behavior can be attributed to the lack of parabolicity in certain directions (more specifically to the existence of an elliptic factor).  In this paper we prove that these obstructions can be broken by a perturbation and that in a dense set of smooth-time changes all flows which are not trivially conjugate to the nilflow itself are indeed mixing.  Our result therefore supports the view that mixing is a generic property among parabolic perturbations of nilflows.

% characteristic property of typical  \emph{uniformely parabolic} parabolic systems. 
%These do not show a coherent picture, giving the impression that there is no hope for a unified understanding of parabolic dynamics and description of \emph{parabolic} features. For example, while horocycle flows are mixing and actually mixing of all orders (see \cite{Ma1, Ma2}), nilflows are never mixing (see below).  The philosphy we want to push  is that these are not \emph{typical} examples, and when considering \emph{perturbations} of these well-understood example, common typical features do indeed appear. In this work, in particular, we verify this for mixing in the set up of niflows. 

\subsection{Time-changes and parabolic perturbations.} 
Starting from the classical examples of parabolic flows mentioned above, one can build new parabolic flows by considering \emph{perturbations}: the simplest perturbations are perhaps  \emph{time-changes} (or time-\emph{reparametrizations}) of a given flow (see \S~\ref{sec:basic} for the definition), i.e. flows that move points along the \emph{same orbits}, but with different \emph{speed}.  
This construction has the advantage that certain ergodic properties, like \emph{ergodicity} and {\it cohomological properties} (which only depend on the orbit structure and hence are independent of the time-change) persist and \emph{smooth} time-changes of parabolic flows are still parabolic,   while finer (in particular \emph{spectral})  properties can (and do, as we will discuss) emerge. 

{Indeed even such simple perturbations as time-changes can produce genuinely \emph{new} parabolic flows, i.~e.~flows which are \emph{not} measurably conjugated to the unperturbed flow.} 
There is an obvious way to produce (measurably or even smoothly) conjugated flows, which corresponds to the case when the time-change is (measurably or smoothly) \emph{trivial} (see \S~\ref{sec:basic} for definitions). These trivial time-changes are described by solutions of the so-called \emph{cohomological equation}. 

A key feature of parabolic dynamics is  the existence of \emph{distributional obstructions} (invariant distributions) to solve the cohomological equation, that is, obstructions which are not signed measures\footnote{The first complete study of this phenomenon is perhaps Katok's work (which although written and circulated in the 80's
only appeared in~\cite{Kat:CC1}, \cite{Kat:CC2}) on linear skew-shifts of the $2$-torus, which are closely related to Heisenberg nilflows. Let us also remark that \emph{finitely many} invariant distributions for horocycle flows in the finite area, non-compact case were first constructed by P.~Sarnak \cite{Sarnak} by methods based on Eisenstein series.}. The structure of the space of obstructions was described in the case of translation flows (and locally Hamiltonian flows on surfaces) in \cite{Fo:sol}, for nilflows in~\cite{FF3} and horocycle flows, 
see~\cite{FF1}\footnote{The space of obstructions has a different structure in each of these cases: it  is in fact finite dimensional in any finite (Sobolev) order for translation flows, and  infinite dimensional for any sufficiently high order for nilflows and horocycle flows. Finally, the 
horocycle flow has obstructions of arbitrarily high order, which nilflows lack, and therefore subsumes features 
of both translation flows and nilflows.}.

 As a consequence, among smooth time-changes, {\it smoothly} trivial time-changes are \emph{rare} (i.e. form a finite or countable codimension subspace) and therefore time-changes can have essentially different dynamical  properties. However, the question whether a nontrivial time-change is also non-isomorphic is in general very difficult and the answer is known only in a few cases. 
 For example, for horocycle flows, it follows from Marina Ratner's work in \cite{Rat} and \cite{FF4} that
sufficiently smooth time-changes 
which are measurably isomorphic to the horocycle flow are actually  \emph{smoothly} trivial. Thus, time-changes which give isomorphic flows are \emph{rare} among  sufficiently smooth-time changes of horocycle flows. 

It should be pointed out that describing more general perturbations (beyond the class of time changes) which produce (new) \emph{parabolic} flows is quite delicate,  since by a perturbation  one \emph{typically} gets a hyperbolic flow. Examples of parabolic perturbations, which are not time changes, can be constructed for example by \emph{twisting} (see for example the work of Simonelli on \emph{twisted} horocycle flows, in~\cite{Si}); the twisting construction is a particular case of general isometric or unipotent extensions,  constructions which also preserve the parabolic nature of the dynamics.  New examples of parabolic perturbations for which one can study ergodic theoretical properties were recently constructed by D.~R.~in~\cite{Rav3}.

\subsection{Previous results on time-changes.} 

\emph{Perturbations} of (homogeneous) parabolic flows are much less understood than the classical homogeneous parabolic examples, even in the simplest case of time-changes. 
For example, while ergodic and spectral properties of horocycle flows have long been well-understood (see for example   
\cite{Fu:uni, Ma2, Pa:hor}), 
much less is known about the ergodic theory of time-changes of unipotent flows or nilflows.
It is a classical result of B.~Marcus \cite{Ma2} that smooth time-changes of horocycle flows are mixing\footnote{Let us recall that a measure-preserving flow $(\varphi_t)_{t\in \mathbb{R}}$ on a probability space $(X,\mathscr{A},\mu)$ is \emph{mixing} if for any two square-integrable functions $f,g\in L^2(X,\mathscr{A},\mu)$ the correlations $\int_X (f\circ \varphi_t) g \diff\mu $ converge to $(\int_X f \diff \mu)( \int_X g \diff \mu)$ as $t\to +\infty$.} (and actually mixing of all orders) (see also a previous result by Kuschnirenko \cite{Kuschnirenko:mixing}, which applies to time-changes  which are sufficiently small in the $\mathscr{C}^1$ topology). 
Effective mixing and spectral results  are recent.  Decay of correlations and the Lebesgue spectral property were proved by two of the authors (G.~F.~and C.~U.) in \cite{FU}, thus partially confirming  the Katok-Thouvenot  conjecture \cite{KT} on the nature of the spectrum (the question on the multiplicity of the spectrum was left open). The absolute continuity of the spectrum was simultaneously proved by Tiedra de Aldecoa in \cite{Tie} (and later extended to the case of semi-simple unipotent flows by Simonelli \cite{Si}).  The methods for proving the Lebesgue spectrum property have been refined by B.~Fayad, G.~F.  and A. Kanigowski  \cite{FFK}, who 
treated the case of Diophantine toral flows with a sufficiently strong power singularity (Kochergin flows). 

In a recent improvement of the paper \cite{FFK} the authors were able to prove the countable multiplicity of the spectrum  for Kochergin flows, as well as for time-changes of the horocycle flows (thereby completing the proof of the Katok-Thouvenot conjecture).

Recent work of  A.~Kanigowski, M.~Lemanczyk and C.~U.~\cite{KLU}  has brought the insight that although horocycle flows are uniformly parabolic, they have special,  non-generic properties coming from the homogeneous nature of the dynamics. In particular, all \emph{powers}  of the horocycle flows are isomorphic through the action of the geodesic flow. For non-trivial smooth time changes  of the horocycle flow,  this phenomenon does \emph{not} happen and one can prove that powers  are all \emph{disjoint} (as shown in \cite{KLU} and also by L.~Flaminio and G.~F.~in \cite{FF4} using Ratner's work \cite{Rat}).

\smallskip

Locally Hamiltonian flows on higher genus surfaces  can be seen as \emph{singular} time-changes of linear flows on translation surfaces (which are well-known \emph{not} to be mixing, see \cite{Ka:int}). Since, as already mentioned, locally Hamiltonian flows  are parabolic, but \emph{not-uniformly} parabolic,  the presence (or genericity) of \emph{mixing} is much more delicate (and less persistent) than in the uniformly parabolic case.   
 It turns out furthermore that  whether mixing is typical depends crucially on the type of singularities: while the presence of degenerate (multi-saddle singularities) was long known to produce mixing \cite{Ko2} if all singularities are non-degenerate (Morse) singularities,  mixing is generic (on each minimal component) if there are \emph{saddle loops} homologous to zero, but absence of mixing (and weak mixing) is generic if the flow has only simple saddles. Strengthenings of the mixing property, such as quantitative mixing estimates, spectral properties  or multiple mixing, and exceptional mixing examples  have also been studied (see e.~g.~\cite{Fa2, Rav1, FFK, CW}).
\smallskip

Not much is known for time-changes of nilflows. In the special case of {\it Heisenberg nilflows}, mixing for non-trivial time-changes  was proved  by three of the authors (A. A., G. F. and C. U.) in \cite{AFU}; G. F. and A.~Kanigowski proved effective mixing results for generic nilflows in \cite{FK1} and the Ratner property, disjointness of powers and multiple mixing \cite{FK2} for the (measure zero) class of Heisenberg nilflows with bounded type frequencies.

The advances in the ergodic theory of parabolic flows recalled above mainly concern time-changes of {\it renormalizable flows},  a class which so far includes  only translation flows, horocycle flows and Heisenberg nilflows. For these flows, renormalization provides a powerful tool to analyze the fine behavior of ergodic averages, which is crucial in several  results on the ergodic theory of their time-changes\footnote{See \cite{FF1,BuFo} for horocycle flows, \cite{FF2} for  Heisenberg nilflows, and \cite{Fo:dev,Bu} for translation flows (or IET's). In the case of Heisenberg nilflow, bounds on ergodic integrals have a long history going back to the work of Hardy and Littlewood on bounds of quadratic Weyl sums more than a century ago until the optimal bounds of H.~Fiedler, W.~Jurkat and O.~K\"orner  \cite{FJK:Weyl}. 
Recent results have refined the analysis of the behavior of ergodic integrals and derived results on their limit distributions (\cite{FF3}, J.~Marklof~\cite{Mr}, ~F.~Cellarosi and J.~Marklof~\cite{CM}, J.~Griffin and J.~Marklof \cite{GM},  A.~Fedotov and F.~Klopp~\cite{FeKl},  G.~F. and A.~Kanigowski~\cite{FK1}).}.  
Results for non-renormalizable flows, such as higher-step nilflows, are much rarer. 
In fact, refined quantitative  estimates, especially pointwise lower bounds on sets of large measures, are \emph{not} available for higher step nilflows, contrary to the step two case\footnote{However, quantitative (but not optimal) equidistribution estimates were proved for Lebesgue almost all points in the class of quasi-Abelian nilflows by \cite{FF3};  see also the related {\it uniform} estimates proved by T.~Wooley, e.g.~in \cite{W1}, \cite{W2} and, independently, by J. Bourgain, C. Demeter  and L.~Guth \cite{BDG} for exponential sums
(see also the Bourbaki seminar~\cite{P}, section 2.1.1).}. None of the known results seem to provide \emph{point-wise} lower bounds (on sets of large measure), although lower bounds in square mean follow from representation theory.

It is however natural to ask whether the results proved for Heisenberg nilflows also hold  for other nilflows, i.e.~when the step and dimension are higher. A first result in this direction was obtained by D.~R., who studied mixing among non-trivial time-changes in the class of \emph{quasi-Abelian filiform nilflows} (this is a special class of nilflows which constitutes a natural higher dimensional extension of Heisenberg nilflows, since they, as in the Heisenberg case, have a  Poincar{\'e} \emph{section} which is a skew-translation on a torus).

\subsection{Main results}
In this paper we consider \emph{a general nilmanifold} $M= \Gamma\backslash G$ (where $G$ is a nilpotent Lie group and $\Gamma< G$ a lattice) of step at least $2$ (to exclude the case when $G$ is Abelian and hence $M$ is a torus). We then consider a nilflow $\phi=\{\phi_t\}_{t \in \R}$ on $M$ and assume only that $\phi$ is (uniquely) ergodic; by \cite{AGH},  this equivalently means that the linear flow on the toral factor is \emph{completely irrational} (see \S~\ref{sec:prelim_nilmfd}). In the following we will write \emph{parabolic nilflows} as a shortening for a nilflow on a nilmanifold of step at least $2$ (since the step $1$ or Abelian case gives rise to \emph{elliptic} flows).  

Our main result shows that, within a dense class of smooth-time changes of ergodic parabolic nilflows, mixing arises as soon as the time-change is \emph{measurably non-trivial} (i.e.~it is not cohomologous to a constant with a measurable transfer function).

\begin{theorem}\label{thm:main}
For any nilmanifold $M$ of step at least $2$, there exists a dense set $\mathscr{P}$ of smooth functions such that, for any completely irrational nilflow on $M$, the time-changes generated by positive elements of $\mathscr{P}$ are either \emph{measurably trivial} or \emph{mixing}.
\end{theorem}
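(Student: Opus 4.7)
The plan is to extend the strategies of \cite{AFU} and \cite{Rav2} from Heisenberg and quasi-Abelian filiform nilmanifolds to the general setting, via an induction on the nilpotency step $s \ge 2$ of $M = \Gamma \backslash G$ combined with a representation-theoretic analysis. Write $X \in \mathfrak{g}$ for the infinitesimal generator of $\phi$, denote by $Z = Z(G)$ the center of $G$, and let $\bar\phi$ be the nilflow induced by $\phi$ on the step-$(s{-}1)$ nilmanifold $M/Z$; complete irrationality of $\phi$ passes to $\bar\phi$. I would define $\mathscr{P}$ to be the set of smooth functions of the form
\[
\tau \;=\; c \,+\, X h \,+\, f,
\]
where $c \in \R$, $h \in C^\infty(M)$, and $f$ is a finite linear combination of matrix coefficients of irreducible unitary subrepresentations of $L^2(M)$ in its Kirillov decomposition. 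Density of $\mathscr{P}$ in $C^\infty(M)$ follows by standard approximation in the unitary dual of $G$. The summand $c + Xh$ generates a smoothly trivial time-change (a constant reparametrization composed with a smooth coboundary), so the mixing properties of the time-change are controlled by $f$, and measurable non-triviality of $\tau$ translates into a condition on the central-character content of the representations appearing in $f$.

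The inductive structure is governed by the orthogonal splitting $L^2(M) = L^2(M/Z) \oplus \mathcal{H}^\perp$, where $L^2(M/Z)$ is identified with the $Z$-invariant functions (equivalently, the sum of irreducibles with trivial central character), and $\mathcal{H}^\perp$ is the sum of irreducibles with non-trivial central character. Split $\tau = \bar\tau + \tau^\perp$ into its fibrewise $Z$-average and its zero-average complement. If $\bar\tau$, viewed as a function on $M/Z$, is itself measurably non-trivial as a time-change generator for $\bar\phi$, then by the inductive hypothesis the time-changed quotient flow is mixing; since a $Z$-invariant time-change descends to $\bar\phi^{\bar\tau}$, the inductive mixing statement transfers to correlations between $L^2(M/Z)$-components on $M$. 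It remains to handle correlations involving $\mathcal{H}^\perp$, and to deal with the situation in which $\bar\tau$ is measurably trivial while $\tau^\perp$ is not; both are addressed by the central shearing step below, which also handles the base case $s = 2$.

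For the shearing step, pick a central generator $Z_0 \in \mathfrak{z}$ and compare the time-change orbits through $x$ and $\exp(u Z_0)\,x$. Since $Z_0$ is central, the cocycle identity for the time-change gives
\[
\phi^\tau_t\bigl(\exp(u Z_0)\,x\bigr) \;=\; \exp(u Z_0)\,\phi_{S(x,t;u)}(x),
\]
with $S(x, t; u) - S(x, t; 0)$ controlled to first order in $u$ by the Birkhoff integral
\[
\int_0^{S(x, t; 0)} (\partial_{Z_0} \tau)(\phi_s x)\,\diff s.
\]
For $\tau \in \mathscr{P}$ with a non-trivial component in $\mathcal{H}^\perp$, these Birkhoff integrals grow polynomially in $t$ with a positive mean-square lower bound, by the representation-theoretic ergodic estimates of \cite{FF3}. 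Combined with the non-triviality of the central character on $\mathcal{H}^\perp$ (which turns $Z$-translation into rapid oscillation of the corresponding test functions), an integration-by-parts, or stationary-phase, argument in the $Z$-variable then produces decay of the correlation $\langle f \circ \phi^\tau_t, g \rangle$.

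The main obstacle is precisely the passage from $L^2$ (mean-square) lower bounds on the Birkhoff integrals, which are all that representation theory directly provides in higher step, to a pointwise shearing statement robust enough to feed into a stationary-phase argument. This is where the restriction to the dense class $\mathscr{P}$ of functions with finite representation-theoretic support becomes essential: each fixed $\tau \in \mathscr{P}$ involves only finitely many irreducibles, allowing for uniform cocycle estimates. A Chebyshev-type, second-moment argument should then upgrade mean-square lower bounds to pointwise shearing on a set of measure close to $1$, while the complementary bad set contributes negligibly to the correlation integral by unique ergodicity and Cauchy--Schwarz. I expect this quantitative upgrade, together with careful bookkeeping of the inductive constants along the central series, to be the most delicate part of the argument.
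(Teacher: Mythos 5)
Your high-level architecture (induction through central quotients, dichotomy between a non-trivial quotient time-change and a genuinely central component, shearing in a central direction) is close in spirit to what the paper does, but two of your steps have genuine gaps, and one of them is exactly the step the paper flags as the core difficulty.

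The most serious gap is in your ``central shearing'' step. You invoke mean-square lower bounds on the Birkhoff integrals $\int_0^{S} (\partial_{Z_0}\tau)(\phi_s x)\,\diff s$ from \cite{FF3} and then propose ``a Chebyshev-type, second-moment argument'' to upgrade them to a pointwise lower bound on a set of measure close to $1$. This upgrade does not work: Chebyshev controls the measure of the set where a Birkhoff integral is \emph{large} in terms of its second moment, i.e.\ it gives upper bounds of the wrong direction; to turn an $L^2$ lower bound $\|F_t\|_2 \gtrsim t^a$ into a pointwise lower bound away from a small bad set you need anti-concentration input (Paley--Zygmund with controlled fourth moments, or structural information on $F_t$), and precisely this is unavailable for higher-step nilflows. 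Indeed the paper notes explicitly that pointwise lower bounds on sets of large measure are not available beyond step two, and it avoids all quantitative growth entirely: the growth in measure of $F_t$ (Theorem~\ref{thm:stretch_ergodic_integral}) is proved by a soft Gottschalk--Hedlund argument (Lemma~\ref{lemma:Gottschalk-Hedlund}), which gives growth of the Ces\`aro averages of $\mu\{|F_t|<C\}$ from non-coboundarity alone, bootstrapped to growth along a full sequence $t\to\infty$ by the decoupling estimate (Proposition~\ref{proposition:decoupling}). The decoupling proposition is the new idea: it shows $F_T\circ\phi^U_t - F_T$ is large in measure by exploiting that pushing a $Y$-arc (from a Heisenberg triple $[X,Y]=Z$) wraps fast in the central torus fiber, so the oscillatory factors $e_{\mathbf v}(-R(s))$ decouple the two ergodic integrals. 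Your proposal has no substitute for this mechanism, and the second-moment route you sketch cannot replace it.

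There is a secondary structural issue. You quotient by the full center $Z(G)$ at each step, and you define $\mathscr P$ via finitely many Kirillov irreducibles. The paper instead chooses, at each level, a \emph{Heisenberg triple} $(X^{(i)},Y^{(i)},Z^{(i)})$ with $Z^{(i)}\in\mathfrak g^{(i)}_k$ and quotients only by the rational envelope $[Z^{(i)}]_{\Gamma^{(i)}}$; the element $Y^{(i)}$ is then exactly what drives both the decoupling estimate and the lift of mixing from $M^{(i+1)}$ to $M^{(i)}$ in the coboundary case (Proposition~\ref{coboundary_case}). Quotienting by all of $Z(G)$ loses this $Y$-direction, and it also does not guarantee the step drops by exactly one, so the inductive accounting is not aligned with the central series. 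Finally, a smaller but real point: when $\bar\tau$ is non-trivial, the time-change $\phi^\tau$ does not project to $\bar\phi^{\bar\tau}$ unless $\tau^\perp$ is a coboundary, so transferring mixing from the quotient requires first replacing $\tau$ by a cohomologous $Z$-invariant time-change (the paper does this via Lemma~\ref{lemma:triviality}); your sketch elides this. Altogether, the outline is directionally right, but the key technical step — producing pointwise shearing in measure without quantitative equidistribution — is missing and cannot be filled by the Chebyshev argument you propose.
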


The dense class of time-changes in the statement of Theorem \ref{thm:main} will be described in detail later, in section~\S \ref{sec:induction}.
The assumption that the step is at least~$2$ is needed to exclude the abelian case of completely irrational flows on tori: in that case, non-singular, smooth  time-changes are typically \emph{not} mixing (under a full measure Diophantine condition on the frequencies) by classical KAM-type results, see e.g.~\cite{Kolmogorov}. Thus, if the step is one, we are in the elliptic (and non-parabolic) world. 
We stress that we do \emph{not} require any Diophantine condition on the frequencies of the toral factor, only complete irrationality: as soon as the nilflow is (uniquely) ergodic, non-trivial time-changes in our class are mixing.

Our result also
implies that smooth time-changes of a completely irrational nilflow which are \emph{not} measurably \emph{trivial} are \emph{not}  measurably isomorphic to the nilflow (since they are mixing while nilflows are not). Since the converse implication is obvious, we get the following corollary.

\begin{corollary}
{Given any completely irrational nilflow on a  nilmanifold of step at least $2$, a time-change 
within the dense set given by Theorem~\ref{thm:main} is \emph{measurably  conjugated} (isomorphic) to the original flow  \emph{if and only if}  it is \emph{measurably trivial}.}
\end{corollary}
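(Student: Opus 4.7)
The plan is to derive the corollary directly from Theorem~\ref{thm:main} together with the classical fact, mentioned earlier in the introduction, that ergodic nilflows on nilmanifolds of step at least $2$ are never weakly mixing (and hence never mixing). Since the statement is a biconditional, I will split it into the two implications.

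The forward direction --- measurable triviality implies measurable isomorphism to the original nilflow --- is essentially tautological from the definitions recalled in \S~\ref{sec:basic}. If the reparametrization is cohomologous to a constant via a measurable transfer function, then the time-change is conjugated to the original nilflow (up to a linear rescaling of time) by a measurable map built from the coboundary. No appeal to Theorem~\ref{thm:main} is needed here. The converse is the content that actually uses Theorem~\ref{thm:main}: suppose a time-change $\psi$ generated by an element of the dense class $\mathscr{P}$ is measurably isomorphic to the completely irrational nilflow $\phi$. Then $\psi$ inherits every measure-theoretic invariant of $\phi$; in particular, because $\phi$ has a non-trivial Kronecker factor coming from the toral projection, $\phi$ is not weakly mixing and a fortiori not mixing, so $\psi$ cannot be mixing. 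Theorem~\ref{thm:main} gives the dichotomy that every time-change in $\mathscr{P}$ is either measurably trivial or mixing; ruling out the latter alternative forces $\psi$ to be measurably trivial.

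The only point that calls for care is the invocation of the fact that $\phi$ is not mixing; this is classical and follows for example from \cite{AGH} via the existence of the toral (Kronecker) factor, but I would cite it explicitly to make the implication airtight. Given this, the corollary reduces to a one-line consequence of Theorem~\ref{thm:main}, so there is no substantive obstacle beyond what is already overcome in the proof of the main theorem.
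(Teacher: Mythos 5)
Your proposal is correct and matches the paper's argument: the forward direction is the triviality lemma of \S~\ref{sec:basic}, and the converse uses the dichotomy of Theorem~\ref{thm:main} together with the fact (Theorem~\ref{th:Green}, property (iv), giving a non-trivial Kronecker factor) that nilflows are never mixing, exactly as the paper remarks just before the corollary.
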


The conclusion one might want to draw from the main  result of this paper 
 (as well as the results in \cite{KLU} and \cite{FF4} on disjointness for time-changes of horocycle flows)  is hence that (homogeneous) nilflows (as well as the classical horocycle flow)   are indeed \emph{not generic} examples of parabolic flows and 
display "{\em exceptional}" properties. As soon as homogeneity and extra structures (such as toral factors, or isomorphisms between time-$T$ maps) are broken by a perturbation, the expected  "{\em generic}" parabolic features 
 indeed do emerge.

\subsection{Open problems}\label{sec:open}

Our work leaves open some natural questions. 

\smallskip
Both in \cite{AFU} and \cite{Rav2}, as well as in Theorem~\ref{thm:main},  the dynamical dichotomy in the result (between mixing and trivial time-changes) is only claimed within a  special (sub)class of time-changes. Even though the class that we consider is  \emph{dense} in the smooth category (in the $\mathscr{C}^{\infty}$ norm),  a natural question (which is already open even in the Heisenberg case), is to consider (more) \emph{general} smooth time-changes. 

\smallskip
\noindent {\it Problem 1}:  Is \emph{any} measurably non-trivial smooth time-change of a uniquely ergodic 
parabolic (even Heisenberg) nilflow   {weakly mixing}?  mixing?  

\smallskip
{ For renormalizable flows, it is possible to derive {\it cocycle rigidity} results   from results on growth of ergodic integrals of smooth function which are not smooth coboundaries, see \cite{AFU,FF4}. Hence, an \emph{effective} characterization of mixing time-changes  can be given in terms of vanishing of invariant distributions.   For general nilflows,  contrary to the horocycle or Heisenberg case, we are unfortunately not able to explicitly  describe the set of measurably trivial time-changes. }

We therefore pose the following problem.

\smallskip
\noindent {\it Problem 2}:  Give an \emph{effective} description of the class of measurably trivial, non mixing time-changes for higher step nilflows. %section \ref{}.

\smallskip
For Heisenberg nilflows, the results in \cite{AFU} have been recently strengthened by  A.~Kanigowski and G.~F.~in \cite{FK1} and in~\cite{FK2}: under a full measure Diophantine condition,   \emph{quantitative} mixing estimates are given in \cite{FK1} for a larger  class  (in fact, residual) of time-changes, while for  mixing Heisenbeg nilflows of {\it bounded type} multiple mixing is shown in ~\cite{FK2}.
 It it hence natural to ask whether also  these results extend to higher step nilflows.
 
\smallskip
\noindent {\it Problem 3}:  Under  a Diophantine condition on the frequencies of the toral factor of an ergodic, parabolic nilflow, prove quantitative mixing estimates. 
\smallskip

\smallskip
\noindent {\it Problem 4}:  Under similar (or stronger) assumptions, prove that  mixing time-changes are  mixing of all orders. 
\smallskip

We warn the reader that the above Problems 2, 3, 4 seem hard, because of the lack of fine quantitative estimates on ergodic averages discussed above. 

Let us remark that \emph{multiple mixing} is shown in \cite{FK1} by proving  the Ratner property (which, combined with mixing, implies mixing of all orders). However, the Ratner property is not known to hold even for Heisenberg nilflows which are not of bounded type.

\subsection{Strategy of the proof}
Results on mixing  for time-changes of homogeneous parabolic flows, as well as parabolic surface flows, are all based on a geometric mechanism known as \emph{shearing}: short segments \emph{transversal} to the flow, pushed by the flow, get \emph{sheared}  in the direction of the flow\footnote{Shearing (in the direction of the flow) was for example exploited by Marcus to prove mixing for smooth time changes of horocycle flows \cite{Ma1},  and it provides the basis for  all the mixing results in the context of area-preserving flows (e.g.  in \cite{Ko2, Fa2, KS,  Ul1, Rav1, CW}).} (or in a direction which commutes with the flow). 
When the curves are sheared \emph{in the flow direction} and are asymptotically approximated by flow trajectories, this allows in particular  to prove (quantitative)  mixing by exploiting (quantitative) equidistribution of the trajectories of the (uniquely ergodic) flow. 

 Our result is also  based on a \emph{mixing-via-shearing} argument, but the source of shearing is more subtle. Indeed, recall that nilflows are only \emph{partially parabolic}, hence there are \emph{central} directions which are \emph{not} sheared before the time-change. However,  with a carefully chosen inductive procedure, we are able to either show that in these directions shearing is created by the non-trivial time-change, or the nilflow can be seen as an extension over a lower dimensional nilflow.  {In this case mixing can be lifted through shearing in the central direction (a phenomenon that we call \emph{wrapping in the fibers}), which is exactly the mechanism responsible for the  mixing property of nilflows, relative to the elliptic toral factor.}

We will now explain the main ideas of the proof and the new difficulties which arise in the general case and were not present in the case of Heisenberg nilflows~\cite{AFU} or of quasi-Abelian filiform nilflows~\cite{Rav2}.  The starting point in \cite{AFU} is the representation of a (time-change of a) Heisenberg nilflow  as a special flow over a skew-traslation on a two dimensional torus. For the class of time-changes  considered (which essentially consists of trigonometric polynomials) one can show the that curves in the $1$-dimensional central isometric direction, 
 pushed via the flow, get sheared. This gives the geometric shearing mechanism which then allows to prove mixing. 

A~similar strategy is also used by Ravotti in~\cite{Rav2} to prove 
mixing in the case of  quasi-Abelian filiform nilflows. 

Contrary to the quasi-Abelian case, for a general nilflow  the natural sections are isomorphic to non-toral nilmanifolds and return maps are niltranslations, which are more difficult to handle explicitly. Furthermore, an additional difficulty of the general case, which is not present in  the quasi-Abelian filiform class,  is that the  center can be \emph{higher dimensional}. The key idea  is still to study (short) curves in \emph{a} central direction (to choose carefully, so that it is part of a \emph{Heisenberg triple}, see  
 \S~\ref{sec:induction}) and consider  their pushforward by the flow. Here \emph{two} possible scenarios appear: either there is \emph{shearing}, and one can try to directly prove mixing 
or  it is also possible that shearing does \emph{not} occur (this is what we call the \emph{coboundary case}, see section \ref{sec:coboundary_case}). In this case, the idea is to \emph{quotient out}  central toral fibers, chosen appropriately
(see section~\ref{sec:induction}) 
and find a \emph{factor} which is a time-change of a nilflow on a lower dimensional nilmanifold. In this case, if the factor is mixing, one can "\emph{lift}" the mixing property by the  mechanism of  \emph{wrapping in the fibers}
mentioned above:  if a central curve in the factor is pushed by the factor flow, its lift wraps in the toral fiber \emph{faster} than the equidistribution speed in the factor (this is essentially a consequence of the nilflow 
filtration structure).

To implement this strategy, we  use an inductive argument on the dimension $\text{dim} [G,G]$ %=  \text{ dim}[\mathfrak g,\mathfrak g] $, where  $\mathfrak g$ denotes the Lie algebra of the nilpotent Lie group $G$. 
of the commutator subgroup of the nilpotent group $G$.
%\textcolor{red}{We could add the remark that the same scheme is already in Davide' s paper, although the
%implementation is simpler. G.F.}
A delicate point is how to choose a \emph{sequence} of quotients on which to apply the induction. Here the notion of \emph{Heisenberg triples} (defined in \S\ref{sec:induction}, see Definition~\ref{def:Heisenbergtriple}) plays a key role. Each time we quotient by the (toral) closure of a central flow which belongs to a Heisenberg triple. The assumption that the  time-change is non-trivial guarantees that, before reaching the ``base" case, one has to find a nilflow factor where shearing occurs. The construction of the \emph{tower} of nilflow factors considered is described in detail in section~\ref{sec:induction}.

This inductive presentation of the nilflow on a nilmanifold as tower of toral extensions dictates also the class of time-changes for which we can prove mixing. As in \cite{AFU}, it is crucial for the shearing estimates that the time-change has polynomial features (in the sense that derivatives should be controllable in terms of the function itself). Hence, the class that we consider consists of time-changes which, at each level of the tower, behave in each toral fiber  like trigonometric polynomials (see  Definition~\ref{def:trig_pol_tower} in \S\ref{sec:induction} and also Definition \ref{def:trig_pol} in \S\ref{sec:trig_pol}). 
This produces a $\mathscr{C}^\infty$ dense class of time changes in which we have a dynamical dichotomy, i.e.~either the time change is trivial, or it is %weak mixing and automatically 
mixing.

Let us remark that the choice of abandoning the previous set-up from \cite{AFU}, \cite{Rav2}  based on special flow representations of nilflows was motivated, in addition to the greater elegance and simplicity of the arguments,  by significant technical  difficulties which arise in the general higher step case  in working with special flow representations.
%\emph{not} passing through a Poincar\'e section,  in addition to an (a posteriori) motivation of  elegance and simplicity of arguments, is  crucially  aimed at bypassing technical  difficulties which arise in the representation of nilflows as special flows. 
 In fact,  the relation between the discrete time of the Poincar\'e return map and the continuous time of the nilflow is problematic in the higher step case because of the distributional obstructions to solving cohomological equations for non-toral nilflows and the related deviation of ergodic averages from the mean. 
 However, these difficulties are not completely side-stepped, and even in the current approach significant technical work is devoted to overcome the oscillation of the time-change function on the central fibers, an issue that does not arise in the Heisenberg case.

%\emph{not} passing through a Poincar\'e section,  in addition to an (a posteriori) motivation of  elegance and simplicity of arguments, is  crucially  aimed at bypassing technical  difficulties which arise in the representation of nilflows as special flows. 
% In fact,  the relation between the discrete time of the Poincar\'e return map and the continuous time of the nilflow is problematic in the higher step case because of the distributional obstructions to solving cohomological equations for non-toral nilflows and the related deviation of ergodic averages from the mean.  

We underline that the present paper \emph{supersedes} the previous two results  proved in \cite{AFU} and \cite{Rav2}: not only it gives an \emph{independent} proof of prevalence of mixing also among Heisenberg and a larger class of  time-changes than the one considered in \cite{Rav2} for quasi-Abelian filiform nilflows, but it also follows a much more streamlined approach. The arguments, indeed,  drawing on ideas of \cite{AFU} and \cite{Rav2}, recast them in a geometric framework derived from \cite{FU} (already
adpated to Heisenberg nilflows in \cite{FK1}). % which we believe addition to the greater elegance and simplicity of the argument. 
  In this more  \emph{intrinsic} framework (neither section nor coordinate-dependent), %abandon the special flow representation of \cite{AFU} and \cite{Rav2} to 
we work directly on the manifold, and prove shearing by analyzing  the pushforwards of central curves.  Consequently, the proof of mixing is also less intricate  (instead than analyzing the pushforwards of  partitions into small pieces of curves, we can analyze the pushforwards of a single whole curve and directly use integral estimates).

%\textcolor{orange}{ Ma perche' Davide non ha questo problema?? Questa spiegazione non e' chiara su questo
%punto}.
%In section \ref{sec:structure} at the end of this introduction, we outline where the ideas in this outline appear in the sections of the paper. 

\subsection{Structure of the paper}
In Section~\ref{sec:background} we recall basic definitions, such as time-changes and coboundaries  (\S\ref{sec:basic}), as well as some basic material on nilflows on nilmanifolds (\S\ref{sec:prelim_nilmfd}). In Section~\ref{sec:towers_times},  
we  define the dense class of time-changes in Theorem~\ref{thm:main}. This requires also building the tower of extensions which will be used for the induction (in \S\ref{sec:induction}). %Towers with the desired properties are build using the notion of Heisenberg triple and the inductive Lemma \ref{lemma:Heisenberg} on their existence. 
In Section \ref{sec:tools_for_mixing} we state and prove a number of results which will be used as tools to prove mixing, in particular two lemmas which reduce mixing to a statement about shearing (Lemma \ref{lemma:correlationseasy}  and Lemma \ref{lemma:correlations}  in \S\ref{sec:shearing}) and the computation of pushforwards of curves along the flow (Corollary~\ref{corollary:push_forward} in \S\ref{sec:pushing_curves}). We also prove a result on growth of ergodic sums for functions which are \emph{not} coboundaries, which will provide one of the two shearing mechanisms for mixing.  
The heart of the proof of Theorem~\ref{thm:main} is presented in Section~\ref{sec:proof}, where the two different mechanisms for shearing (the ``growth of ergodic sum'' and the ``wrapping in the fibers'') are exploited in order to prove that mixing holds for some factor in the tower and can be then deduced for the original flow. 
The Appendices contain the proofs of some technical lemmas.

\section{Background}\label{sec:background}
Let us recall for the convenience of the reader some basic definitions and background.
% (on mixing and time-changes in \S\ref{sec:basic}) and some basic definitions and background on nilflows, in \S\ref{sec:prelim_nilmfd}.

\subsection{Basic definitions: mixing, time-changes, coboundaries.}\label{sec:basic}

Let $\phi: = \{\phi_t\}_{t \in \R }$  be a measurable flow  on a pro\-ba\-bi\-li\-ty space $(M,\mu)$.  We recall that $\phi$  is said to be \emph{mixing} if for each pair of measurable sets $A$, $B\subset M$, one has 
$$
\lim_{t\rightarrow \infty} \mu( \phi_t(A)\cap B)=\mu(A)\mu(B)\,,
$$
and \emph{weak mixing} if, for each pair of measurable sets $A$, $B \subset M$,
$$
 \lim_{t\to \infty}  \frac{1}{t} \int_0^t \vert \mu( \phi_s(A)\cap B)- \mu(A)\mu(B) \vert \mathrm{d} s  = 0.
$$

\smallskip

 A flow $\{\widetilde{\phi}_t \}_{t \in \R}$ is called a \emph{re\-pa\-ra\-me\-tri\-za\-tion} or a  
\emph{time-change} of a flow  $\{{\phi}_t \}_{t \in \R}$ on $M$  if there exists a measurable
function $\tau \colon M \times \R \rightarrow \R$  such that for all  
$x \in M$ and $t \in \R$ we have $\widetilde{\phi}_{\tau(x,t)}(x) = \phi_t(x) $. Since
$\{\widetilde{\phi}_t \}_{t \in \R}$ is assumed to be a flow 
the function $\tau(x,\cdot) \colon \R \rightarrow \R$ is an \emph{additive cocycle}
over the flow $\{{\phi}_t \}_{t \in \R}$, that is, it satisfies the cocycle identity:
$$
\tau(x, s+t ) = \tau( {\phi}_s(x), t) + \tau(x,s)\,, \quad \text{ for all }  x\in M\,, \,\, s,t \in \R\,.
$$
If $M$ is a manifold and $\{{\phi}_t \}_{t \in \R}$  is a smooth flow, we will say that 
$\{\widetilde{\phi}_t \}_{t \in \R}$ is a smooth re\-pa\-ra\-me\-tri\-za\-tion if the cocycle $\tau$ is a 
smooth function. By the cocycle property, a smooth cocycle is uniquely determined by
its infinitesimal generator, that is the function $\alpha_\tau \colon M \to \R$ defined by the formula:
$$
\alpha_\tau(x) := \frac{\partial \tau}{\partial t}(x,0) \,, \quad \text{ for all } x\in M\,. 
$$
In fact, given any positive function $\alpha \colon M \to \R^+$, the formula
$$
\tau_\alpha (x,t) := \int_0^t  \alpha( \phi_s(x)) \diff s \,, \quad \text{ for all}\, (x,t)\in M\times \R\,
$$
defines a cocycle over the flow $\{\phi_t\}_{t\in \R}$ with infinitesimal generator $\alpha$.

The infinitesimal generators $V$ and $X$ of the flows $\{\widetilde{\phi}_t \}_{t \in \R}$ and $\{\phi_t \}_{t \in \R}$  respectively are related by the identity:
$$
X = \left. \frac{\diff \phi_t}{ \diff t} \right\vert_{t=0} =\left. \alpha_\tau\, \frac{\diff \widetilde{\phi}_t}{\diff t} \right\vert_{t=0} = \alpha_\tau V\,, \quad \text{i.e.\ \ } V = \frac{1}{\alpha_\tau}X.
$$

An additive cocycle $\tau \colon M \times \R \to \R$ over the flow $\{{\phi}_t\}_{t\in \R}$ is called a measurable (respectively smooth)  \emph{coboundary} if there exists a measurable (respectively smooth)  function $u \colon M \to \R$, called the \emph{transfer function},  such that
$$
\tau(x,t) =  u \circ {\phi}_t(x) - u(x)\,, \quad \text { for all } (x,t)\in M\times \R\,.
$$ 

The additive cocycle $\tau$ is a measurable (smooth) coboundary if and only if its infinitesimal generator $\alpha_\tau$ is a measurable (smooth) \emph{coboundary} for the infinitesimal generator $X$ of the flow $\{{\phi}_t\}_{t\in \R}$, that is, if there exists a measurable (smooth) function $u \colon M \to \R$, also called the transfer function, such that $X u = \alpha_\tau$.

Two additive cocycles  are said to be measurably (respectively smoothly) \emph{cohomologous} if their difference is a measurable (respectively smooth) coboundary in the above sense. 
A cocycle is said to be an \emph{almost coboundary} if it is cohomologous to a constant cocycle.

An elementary, but fundamental, result establishes that time-changes given by  
cohomologous cocycles are  isomorphic (see for example \cite{Kat:CC2}, \S 9, for related results). 
\begin{lemma} 
\label{lemma:triviality} Let $\tau_1, \tau_2 \colon M \times \R \to \R$ be cohomologous cocycles over the flow $\{{\phi}_t\}_{t\in \R}$, that is, assume that there exists
a (measurable) function $u: M\to \R$ such that 
$$
\tau_1(x,t)  -\tau_2(x,t) =  u \circ {\phi}_t(x) - u(x)\,, \quad \text { for all } (x,t)\in M\times \R\,.
$$
Then the time-changes $\{\widetilde {\phi}^{(1)}_t\}_{t\in \R}$ and $\{\widetilde{\phi}^{(2)}_t\}_{t\in \R}$ of the flow $\{{\phi}_t\}_{t\in \R}$ defined by the conditions
$$
\widetilde \phi^{(i)}_{\tau_i(x,t)} (x) =  \phi_t(x)   \,, \quad \text { for all } (x,t)\in M\times \R \text{ and for } i =1,2
$$
are  conjugated by the (measurable) isomorphism $\psi:M\to M$ defined  as 
$$
\psi(x) := \widetilde \phi^{(1)}_{u(x)} (x) \,, \quad \text{ for all }  x\in M\,.
$$
In fact, we have the identity
$$
\widetilde \phi^{(1)}_{\tau}  =   \psi \circ \widetilde \phi^{(2)}_{\tau}  \circ \psi^{-1} \,, \quad \text{ for all } \tau \in \R\,.
$$
The isomorphism $\psi$ maps by push-forward every invariant measure for the time-change 
$\widetilde \phi^{(2)}$  onto the unique invariant measure for the time-change $\widetilde \phi^{(1)}$ which has the same 
transverse measure.
\end{lemma}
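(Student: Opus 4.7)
The plan is a direct verification of the conjugation identity, by unwinding the definitions of $\psi$ and of the two time-changes and comparing time parameters in the common underlying flow $\{\phi_t\}_{t\in\R}$. The crucial ingredients will be: the flow (group) property of $\widetilde\phi^{(1)}$, the defining relation $\widetilde\phi^{(i)}_{\tau_i(x,t)}(x)=\phi_t(x)$ (which, together with $\widetilde\phi^{(i)}$ being a flow, guarantees that $\tau_i(x,\cdot)\colon\R\to\R$ is a bijection), the additive cocycle identity for $\tau_1$, and the coboundary relation between $\tau_1$ and $\tau_2$.

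Fix $y\in M$ and $\tau\in\R$. First I would pick the unique $s\in\R$ with $\tau_2(y,s)=\tau$, so that $\widetilde\phi^{(2)}_\tau(y)=\phi_s(y)$. Using $\psi(y)=\widetilde\phi^{(1)}_{u(y)}(y)$ together with the group law of $\widetilde\phi^{(1)}$, one side of the desired identity unfolds as
\begin{equation*}
\widetilde\phi^{(1)}_\tau\bigl(\psi(y)\bigr)=\widetilde\phi^{(1)}_\tau\bigl(\widetilde\phi^{(1)}_{u(y)}(y)\bigr)=\widetilde\phi^{(1)}_{\tau+u(y)}(y).
\end{equation*}
For the other side, one uses $\phi_s(y)=\widetilde\phi^{(1)}_{\tau_1(y,s)}(y)$ together again with the group law of $\widetilde\phi^{(1)}$ to obtain
\begin{equation*}
\psi\bigl(\widetilde\phi^{(2)}_\tau(y)\bigr)=\widetilde\phi^{(1)}_{u(\phi_s(y))}\bigl(\phi_s(y)\bigr)=\widetilde\phi^{(1)}_{u(\phi_s(y))+\tau_1(y,s)}(y).
\end{equation*}
Thus the conjugation identity is reduced to matching the two $\widetilde\phi^{(1)}$-time parameters, which, after substituting $\tau=\tau_2(y,s)$, is exactly the coboundary relation $\tau_1(y,s)-\tau_2(y,s)=u\circ\phi_s(y)-u(y)$ evaluated at $(y,s)$, i.e.\ the hypothesis.

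The main (rather mild) obstacle is essentially the bookkeeping above and checking that the implicit parameter $s$ is well-defined, a point which is built into the definition of a flow reparametrization. The final statement about invariant measures then follows formally: since $\psi$ is a measurable (and, in the smooth setting, a diffeomorphic) conjugacy, push-forward under $\psi$ bijectively carries $\widetilde\phi^{(2)}$-invariant probability measures to $\widetilde\phi^{(1)}$-invariant ones. To see that transverse measures are preserved, one observes that $\psi(x)=\widetilde\phi^{(1)}_{u(x)}(x)$ lies on the same $\phi_t$-orbit as $x$, so $\psi$ preserves the common orbit foliation of $\phi$, $\widetilde\phi^{(1)}$, and $\widetilde\phi^{(2)}$; consequently it sends transversals of $\widetilde\phi^{(2)}$ to transversals of $\widetilde\phi^{(1)}$, and the transverse measure is unchanged since along each orbit $\psi$ is a pure reparametrization.
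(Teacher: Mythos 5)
Your proof is correct and follows essentially the same argument as the paper's: both verify the conjugation identity $\widetilde\phi^{(1)}_\tau\circ\psi=\psi\circ\widetilde\phi^{(2)}_\tau$ by choosing $s$ with $\tau_2(\cdot,s)=\tau$, unfolding both sides with the group law of $\widetilde\phi^{(1)}$ and the defining relations, and matching the resulting $\widetilde\phi^{(1)}$-time parameters via the coboundary hypothesis; the treatment of invariant measures via preservation of the orbit foliation and transverse measures is likewise the same.
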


\begin{proof} 
Since $\tau_2$ is a cocycle,  for all $(x, \tau)\in M\times \R$,  there exists $t\in \R$  (which depends on $x\in M$)
such that  $\tau_2(x,t) =\tau$. We then claim that we have the following chain of identities
$$
 (\psi \circ \widetilde \phi^{(2)}_{\tau}) (x)  =  (\psi \circ \phi_t) (x)   =   \widetilde \phi^{(1)}_{ \tau_1(x,t) + u(\phi_t(x)) } (x) =
    \widetilde \phi^{(1)}_{ \tau_2(x,t) + u(x) } (x) =   (\widetilde \phi^{(1)}_{ \tau }  \circ \psi )(x)\,,
$$
where the first follows from the definition of the time change  $(\widetilde \phi^{(2)}_{t})_{t\in \R}$
and the choice of $\tau= \tau_2(x,t)$, the second from the definition of the isomorphism $\psi$ and the definition of the
time-change $(\widetilde \phi^{(1)}_{t})_{t\in \R}$ ,
the third is given by the cohomological relation between the cocycles, and finally the fourth (last) follows again from the 
definition of the isomorphism $\psi$.

By its definition, the isomorphism $\psi$ fixes every leaf of the common orbit foliation of the flow $\{\phi_t\}_{t\in \R}$ and of its 
time-changes, hence it fixes all of its transverse invariant  measures. Since every invariant measure for
a flow can be locally decomposed as a product of a transverse invariant measure and of the 
Lebesgue  measure $\diff t$ along the orbits, the statement  concerning the action of the isomorphism $\psi$ on the
invariant measures follows immediately.
\end{proof}

The regularity class of the isomorphisms depends on the regularity class of the transfer function. A time-change defined by a measurable (smooth)  almost coboundary is called \emph{measurably (smoothly)  trivial}.  We recall that the existence of a measurable (smooth) isomorphism of time-changes does not implies that the time-changes are cohomologous.  The problem of deriving cohomological relations from isomorphism of time-changes is an important question on the rigidity of time-changes. Results of this type were first proved for the classical horocycle flow by M.~Ratner \cite{Rat} (and  were completed in \cite{FF4}), but are not known for general unipotent flows. 

\subsection{Preliminaries on nilmanifolds}\label{sec:prelim_nilmfd}
Let~$ \mathfrak g $ be a $k$-step nilpotent real Lie algebra ($k\ge 2$) with a minimal set of generators $ \mathcal E:= \{E_{1}, \dots , E_{n}\}\subset \mathfrak g$.  For all $j\in \{1, \dots, k\}$, let $\mathfrak g_{j}$,  denote the {\it descending central series} of $\mathfrak g$:
\begin{equation}
\label{eq:descser}
\mathfrak g_{1}=\mathfrak g, \ \mathfrak g_{2}=[ \mathfrak g,\mathfrak g], \dots, \ \mathfrak g_{j}=[ \mathfrak g_{j-1},\mathfrak g], \dots, \ \mathfrak g_{k}\subset Z(\mathfrak g)\,,
\end{equation}
where $Z(\mathfrak g)$ is the center of $\mathfrak g$. 

Let $G$ be the connected and simply connected nilpotent Lie group with Lie algebra $\mathfrak g$. The corresponding Lie subgroups $G_j=\exp \mathfrak g_j=[G_{j-1},G]$ form the descending central series of $G$. 
Let $\Gamma$ be a lattice in $G$. It exists if and only if $G$ admits rational structure constants (see, for example, \cite{Ra, CG}).

A (compact) {\em nilmanifold} is by definition a quotient manifold $M:=\Gamma\backslash G$ with $G$ a nilpotent Lie group and $\Gamma\subset G$ a lattice. 
On a nilmanifold $M=\Gamma\backslash G$, the group $ G $ acts on the right transitively by right multiplication. By definition, the {\it nilflow $\phi^X$ generated by $X\in \mathfrak g$ }is the flow obtained by the restriction of this action to the one-parameter subgroup $(\exp t X)_{t \in \R}$ of $G$:
\begin{equation}
\label{eq:nilflow}
\phi^X_t(\Gamma x ) = \Gamma x \exp( t X ). 
\end{equation}
It is plain that nilflows on $\Gamma\backslash G$ preserve the probability measure $\mu$ on $\Gamma\backslash G$ given locally by the Haar measure. To simplify the notation, the vector field on $\Gamma\backslash G$ generating the flow $\phi^X$ will also be indicated by $X$.

Every nilmanifold is a fiber bundle over a torus. In fact, the group~$G^{Ab}=G/[G,G]$ is Abelian, connected and simply connected, hence isomorphic to~$\R^n$ and~$\Gamma^{Ab}= \Gamma/[\Gamma,\Gamma]$ is a lattice in~$G^{Ab}$. Thus we have a natural projection
\begin{equation}
\label{eq:projection}
p: \Gamma\backslash G\to \Gamma^{Ab}\backslash G^{Ab} 
\end{equation}
over a torus of dimension~$n$. We recall the following:

\begin{theorem}[\cite{Gr}, \cite{AGH}]
\label{th:Green} 
The following properties are equivalent.
 \begin{itemize}
 \item[(i)] The nilflow $\phi^X$ on $\Gamma\backslash G$ is ergodic.
 \item[(ii)] The nilflow $\phi^X $ on $\Gamma\backslash G$ is uniquely ergodic.
 \item[(iii)] The nilflow $\phi^X$ on $\Gamma\backslash G$ is minimal.
 \item[(iv)] The projected flow $\psi^{X^{Ab}}$ on $\Gamma^{Ab} \backslash G^{Ab}$ $\approx \T^n$ is a completely irrational linear flow on~$ \T^n $, hence it is (uniquely) ergodic and minimal.
 \end{itemize}
\end{theorem}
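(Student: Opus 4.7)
The simple implications first. (ii) $\Rightarrow$ (i) is immediate, while both (i) $\Rightarrow$ (iv) and (iii) $\Rightarrow$ (iv) follow because the abelianisation $\phi^{X^{Ab}}$ is a continuous factor of $\phi^X$: ergodicity (respectively minimality) descends to the factor, and Kronecker--Weyl characterises ergodic (equivalently, minimal) linear flows on $\T^n$ as those with rationally independent frequencies. Once (iv) $\Rightarrow$ (ii) is established, (iii) comes for free: the unique invariant probability must equal the Haar measure $\mu$ (which is always invariant), and the full support of $\mu$ forces every orbit closure to coincide with $M$. The heart of the theorem is therefore the implication (iv) $\Rightarrow$ (ii).

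I would prove (iv) $\Rightarrow$ (ii) by induction on the nilpotency step $k$ of $G$. The base case $k=1$ is Weyl equidistribution on the torus. For $k \geq 2$, let $Z := G_k$ denote the last non-trivial term of the descending central series, so $Z \subset Z(G)$; by the rationality of the structure constants (\cite{Ra, CG}), $\Lambda := \Gamma \cap Z$ is cocompact in $Z$, making $T := \Lambda\backslash Z$ a central torus. Setting $G' := G/Z$ and $\Gamma' := \Gamma Z/Z$, the natural projection $\pi \colon M \to M' := \Gamma'\backslash G'$ is a principal $T$-bundle, and $X$ descends to a generator $X'$ of a factor nilflow $\phi^{X'}$ on the step-$(k-1)$ nilmanifold $M'$. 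Crucially the abelianisations of $G$ and $G'$ agree, so the toral factor of $\phi^{X'}$ is the \emph{same} completely irrational flow and, by the inductive hypothesis, $\phi^{X'}$ is uniquely ergodic.

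It remains to lift unique ergodicity across the central toral extension $\pi$. Let $\nu$ be any $\phi^X$-invariant Borel probability on $M$; then $\pi_*\nu$ is $\phi^{X'}$-invariant and thus equals the Haar measure on $M'$. To identify $\nu$ with the Haar measure $\mu$ on $M$, I would test $\nu$ against smooth $f$ decomposed into its fibrewise $T$-Fourier series $f = \sum_{\chi \in \widehat T} f_\chi$ with $f_\chi(x z) = \chi(z) f_\chi(x)$ for $z \in Z$. The trivial character yields a function pulled back from $M'$, so $\int f_0\, d\nu = \int f_0\, d\mu$. For non-trivial $\chi$, write $\int f_\chi\, d\nu$ as a Birkhoff time average and perform Weyl differencing in the direction of a $Y \in \mathfrak g_{k-1}$ chosen so that $\chi(\exp [X,Y]) \neq 1$. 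Since $[X,Y] \in \mathfrak g_k$ is central, the Baker--Campbell--Hausdorff series terminates to give the exact identity $\exp(tX)\exp(sY)\exp(-tX) = \exp(sY)\exp(st[X,Y])$, so $\chi$ acquires a drifting phase $\chi(\exp(st[X,Y]))$ linear in $t$. After differencing, the resulting two-point correlation reduces to a character-twisted ergodic average of an $M'$-function, which vanishes by the inductive unique ergodicity of $\phi^{X'}$. Hence $\int f_\chi\, d\nu = 0$ for every non-trivial $\chi$, and $\nu = \mu$.

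The main obstacle is this last step: producing, for every non-trivial $\chi \in \widehat T$, some $Y \in \mathfrak g_{k-1}$ with $\chi(\exp[X,Y]) \neq 1$. This non-vanishing exploits simultaneously that $\mathfrak g_k = [\mathfrak g_{k-1}, \mathfrak g] \neq 0$ (the exact step being $k$) and the complete irrationality of the toral factor, which prevents $X$ from living in any proper rational subalgebra that would trivialise the bracket pairing against the rational character $\chi$. This is the precise nilpotent incarnation of the central-shearing mechanism that drives the mixing results in the present paper, so the base-line ergodicity result and the main theorem share the same conceptual core.
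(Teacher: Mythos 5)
The paper itself does not prove Theorem~\ref{th:Green}; it simply recalls it from \cite{Gr} and \cite{AGH}, so there is no ``paper proof'' to compare your sketch against. Evaluated on its own merits, your overall scheme (reduce all implications to (iv)~$\Rightarrow$~(ii), induct on the nilpotency step by quotienting out $Z = \exp\mathfrak{g}_k$, decompose into $T$-Fourier modes, and treat non-trivial modes via the central bracket pairing) is a recognisable Parry/Green style argument, and the easy implications, the BCH computation, and the setup of the quotient $M\to M'$ are all correct.

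There is, however, a genuine gap in the final step. After van der Corput differencing the Birkhoff average $I_T(x)=\tfrac1T\int_0^T f_\chi(\phi_t^X x)\,\diff t$ in the time direction, the resulting two-point correlation $h_s(x):= f_\chi(\phi_s^X x)\overline{f_\chi(x)}$ is indeed an $M'$-function (it is $T$-invariant because $Z$ is central), and inductive unique ergodicity of $\phi^{X'}$ does give uniform convergence of its Birkhoff averages. But those averages converge to $\int_{M'}\bar h_s\,\diff\mu' = \langle f_\chi\circ\phi_s^X,\,f_\chi\rangle_{L^2(M,\mu)}$, the autocorrelation of $f_\chi$ under $\phi^X$ with respect to Haar, which is \emph{not} zero for fixed $s$ (e.g.\ it equals $\|f_\chi\|_2^2$ at $s=0$). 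To finish one needs the separate Cesaro estimate $\tfrac1S\int_0^S |\langle f_\chi\circ\phi_s^X, f_\chi\rangle|\,\diff s\to 0$, which by Wiener's lemma is equivalent to the continuity of the spectral measure of $f_\chi$ under $\phi^X$, i.e.\ to the absence of $\phi^X$-eigenfunctions in the weight-$\chi$ subspace $H_\chi\subset L^2(M,\mu)$. That spectral input is exactly where the $Y$-flow and the phase twist $\chi(\exp(st[X,Y]))$ must be used (if $f\in H_\chi$ were a $\phi^X$-eigenfunction with eigenvalue $e^{2\pi i\lambda t}$, then $f\circ\phi_r^Y$ would be a norm-one eigenfunction with eigenvalue $e^{2\pi i(\lambda+rc)t}$, $c=\langle\ell_\chi,[X,Y]\rangle\neq 0$, producing a continuum of mutually orthogonal eigenfunctions and contradicting separability of $L^2$). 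Attributing the vanishing of the two-point correlation directly to ``inductive unique ergodicity of $\phi^{X'}$'' conflates this spectral step with the equidistribution step and does not constitute a proof. Separately, the assertion that for every non-trivial $\chi$ one can find $Y\in\mathfrak{g}_{k-1}$ with $\chi(\exp[X,Y])\neq 1$ is stated but not justified; this is precisely the content of an argument like the paper's Lemma~\ref{thm:Heisenberg_triple} (existence of a $Y$ with $[X,Y]$ of full rational envelope in $\mathfrak g_k$), and it does require the transcendence/irrationality bookkeeping performed there, since $[X,\mathfrak g_{k-1}]$ can be a proper subspace of $\mathfrak g_k$.
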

By property $(iv)$ it follows that a nilflow is never (weakly) mixing, since it has a linear toral flow, which has pure point spectrum, as a factor. However, it is possible to prove by methods of representation theory that any nilflow is  {\it relatively mixing}, in the sense that the limit of correlations of functions with zero average along all fibers of the projection in formula~\eqref{eq:projection} is equal to zero.

The \emph{complete irrationality} condition  mentioned in  our main result, Theorem~\ref{thm:main}, and in  $(iv)$ in the above Theorem~\ref{th:Green}, refers to the rational structure of the Abelianized group $G^{Ab}$ determined by the lattice $ \Gamma^{Ab} \subset G^{Ab}$. This structure is in turn determined by the lattice $\Gamma \subset G$ as follows.  Let us first recall the definition of \emph{Malcev basis} (see~\cite{CG}).

\begin{definition}[Malcev basis] 
\label{def:Malcev}
  A Malcev basis  for $\mathfrak{g}$ through the descending
  central series $\mathfrak{g}_j$ and strongly based at $\Gamma$ is a basis $E_1^1, E_2^1, \dots E_{n_1}^1, E_1^2, \dots,
  E_{n_2}^2, \dots, E_1^k, \dots, E_{n_k}^k$, (with
  $n_1=n$) of $\mathfrak{g}$ 
  satisfying the following
  properties:
  \begin{itemize}
  \item[(i)] if we drop the first $\ell$ elements of the basis we
    obtain a basis of a subalgebra of codimension $\ell$ of
    $\mathfrak{g}$.
  \item[(ii)] if we set $\mathcal E^j:=\{ E_1^j, \dots,
    E_{n_j}^j\}$ the elements of the set $\mathcal
    E^j\cup\mathcal E^{j+1} \cup \dots \cup\mathcal E^k $
    form a basis of $\mathfrak{g}_j$.
  \item[(iii)] every element of $\Gamma$ can be written as a
    product
    $$
    \exp m_1^1 E_1^1 \dots \exp m_{n_1}^1 E_{n_1}^1\dots
    \exp m_1^k E_1^k \dots \exp m_{n_k}^k E_{n_k}^k
    $$
    with integral coefficients $m_i^j$.
  \end{itemize}
 % \end{sloppypar}
\end{definition}
\noindent The existence of a Malcev basis can be derived by combining the
  proofs of Theorems~1.1.13 and~5.1.6 of~\cite{CG}. A Malcev basis determines a \emph{rational structure}  
  on the Lie algebra  $\mathfrak g$ of the nilpotent Lie group $G$ as follows.

\begin{definition}[Rational and completely irrational vectors]
\label{def:rational} An element $V\in \mathfrak g$ is called  \emph{rational} with respect to the lattice $\Gamma$ if it belongs to the span
over $\Q$ of a Malcev basis strongly based at $\Gamma$, and it is called \emph{rational} with respect to the Abelianized lattice $\Gamma^{Ab}$ if
its projection $V^{Ab} \in \mathfrak g / [ \mathfrak g, \mathfrak g]\approx \R^n$ belongs to the span over
$\Q$ of the projection of the Malcev basis.   
 
An element $V\in  \mathfrak g$ is called \emph{completely irrational}  (with respect to $\Gamma^{Ab}$) if the coordinates of its projection $V^{Ab}$ with respect to a \emph{rational} basis of $\mathfrak g / [ \mathfrak g, \mathfrak g]$, namely a basis given by projections of rational vectors, are linearly independent over $\Q$.
\end{definition}

Thus, if the generators $\{E_1,\dots,E_n\}$ of~$\mathfrak g$ are chosen so that the elements $\{\exp E_1,\dots,\exp E_n\}$ project onto generators $\{\exp E^{Ab}_1,\dots,\exp E^{Ab}_n\}$ of $\Gamma^{Ab}$, then for every $X \in \mathfrak g$  there exists a vector $\Omega_X:=\left(\omega_1(X), \dots,\omega_n(X)\right)\in \R^n$ such that
\begin{equation}
\label{eq:XAb}
X^{Ab} =\omega_1(X) E^{Ab}_1+\dots+\omega_n(X) E^{Ab}_n .
\end{equation}
According the above definition, the element $X$ is called {\em completely irrational (with respect to $\Gamma^{Ab}$)} if the numbers  $\omega_1(X), \dots,\omega_n(X)$ are linearly independent  over $\Q$.

\section{Towers and time-changes}\label{sec:towers_times}
The goal of this section is to define the class of time-changes in which the dichothomy in Theorem~\ref{thm:main} holds. In \S\ref{sec:trig_pol}, we first define trigonometric polynomials relative to a central extension (see Definition~\ref{def:trig_pol}) and state some  of the properties which will be needed later (see \S\ref{sec:prop_trig}). In \S\ref{sec:induction} we then explain how to construct inductively the tower of nilflow extensions we will be working with  and, finally, in \S\ref{sec:time_changes}, we define the class of time-changes that we will consider to prove Theorem~\ref{thm:main} (which consists of trigonometric polynomials relative to the tower, see Definition~\ref{def:trig_pol_tower}).

\subsection{Trigonometric polynomials on nilmanifolds}\label{sec:trig_pol}
Let $M:=\Gamma\backslash G$ be a nilmanifold, with $G$ a nilpotent Lie group with Lie algebra $\mathfrak g$ and $\Gamma\subset G$ a lattice. Let $\z  \subset Z(\mathfrak g)$ be a  $d$-dimensional  \emph{rational} subalgebra; that is, a $d$-dimensional subspace which admits a basis of rational vectors (in the sense of Definition~\ref{def:rational}) and let $H:= \exp \z$ be the corresponding analytic subgroup.  The subgroup $\Lambda:=  H \cap \Gamma$ is a lattice in $H$.  As $H$ is Abelian,
the quotient space $H/\Lambda$ is a $d$-dimensional torus. Letting $\overline{G}:= G/H$ and $\overline{\Gamma}:= \Gamma / (\Gamma \cap H)$ we have a fiber bundle $\pi: M\to \overline{M}$ of the nilmanifold $M$ onto the nilmanifold $\overline{M}:=\overline{\Gamma} \backslash \overline{G}$ whose fibers are precisely the orbits of the torus $T := H/\Lambda$.

We can disintegrate the Haar measure $\mu$ with respect to this fibration as
\begin{equation}
\label{eq:disintegration}
\mu = \int_{\overline{M}} \, \mu_{\pi^{-1}\{{\overline{x}}\}}\, \diff \overline{\mu}(\overline{x}),
\end{equation}
where $\overline{\mu} = \pi_{\ast}\mu$ is the Haar measure on the quotient nilmanifold $\overline{M}$ and the conditional measures $\mu_{\pi^{-1}\{{\overline{x}}\}}$ 
on each fiber of the fibration are equivalent (up to a fixed constant) to the push-forward  of the Lebesgue measure on the torus $T$.

\smallskip
We have an orthogonal decomposition
\begin{equation}\label{eq:orthogonal1}
L^2(M,\diff\mu) = \pi^{\ast}(L^2(\overline{M},\diff {\overline{\mu}})) \oplus [\pi^{\ast}(L^2(\overline{M},\diff {\overline{\mu}}))]^{\perp},
\end{equation}
where $\pi^{\ast}(L^2(\overline{M},\diff {\overline{\mu}}))$ is the space of pull-backs of functions over $\overline{M}$, which coincides with the space of functions over $M$ which are $T$-invariant (and hence constant on the toral fibers).  

In the following, we may identify $\z$ with $\R^d$ and let $(\Phi^{\z}_{\bf t})_{{\bf t}\in \R^d}$ denote the action of $\z \approx \R^d$ on the nilmanifold $M$,
and  identify functions on $\overline{M}$ with their pull-backs, which are defined on $M$ so that they are constant with respect to the action $(\Phi^{\z}_{\bf t})_{{\bf t}\in \R^d}$ .

\smallskip
The space $L^2(M, \diff\mu)$ splits as an orthogonal Hilbert sum of eigenspaces $H_{{\bf v}} (\z)$ of the characters of the torus $T$. 
That is, if we let $\Lambda^\ast$ denote the dual lattice of $\Lambda$, we have a Fourier decomposition 
\begin{equation*}
L^2(M,\diff\mu) = \bigoplus_{{{\bf v}} \in \Lambda^*} H_{{\bf v}} (\z),
\end{equation*}
where
$$
H_{{\bf v}} ( \z) = \left\{ f \in L^2(M,\diff\mu) : f \circ \Phi^{ \z} _{{\bf t} }= 
\exp(2 \pi \imath  \langle {{\bf v}}, {\bf t}  \rangle)  f  \right\}.
$$
In particular, for every function $f \in \mathscr{C}^0(M)$, there exists a family  $(f_{{\bf v}})_{{{\bf v}}\in  \Lambda^\ast}$ of continuous functions such that we have the formula
\begin{equation}
\label{eq:Fourier_exp} 
f \circ \Phi^{\z} _{\bf t} (x)  = \sum_{{{\bf v}} \in \Lambda^\ast} f_{{\bf v}} (x) \exp(2\pi\imath \langle {{\bf v}}, {\bf t}\rangle ) \,, \quad \text{ for all } (x, {\bf t}) \in M \times \z\,.
\end{equation}
We remark that the function $f_{0} \in H_{0} ( \z) = \pi^{\ast}(L^2(\overline{M},\diff {\overline{\mu}}))$ is the pull-back of a continuous function on $\overline{M}$ and hence is constant on all fibers $\pi^{-1}\{{\overline{x}}\}$, while functions in $H_{0} ( \z) ^\perp=
\bigoplus_{{{\bf v}} \in \Lambda^* \setminus \{ 0 \}} H_{{\bf v}} (\z)$ are relative trigonometric polynomials \emph{without} the term which is constant on the fibers $\pi^{-1}\{{\overline{x}}\}$.  
Thus,
  we can write the orthogonal decomposition given in~\eqref{eq:orthogonal1} also as
\begin{align}\label{eq:orthogonal}
&L^2(M,\diff\mu) = H_{0} ( \z) \oplus H_{0} ( \z)^{\perp},  \\ \nonumber \text{where } \quad &  H_{0} ( \z):= \pi^{\ast}(L^2(\overline{M},\diff {\overline{\mu}})), \\  \nonumber&\text H_{0} ( \z) ^\perp:=[\pi^{\ast}(L^2(\overline{M},\diff {\overline{\mu}}))]^{\perp}= \bigoplus_{{{\bf v}} \in \Lambda^* \setminus \{ 0 \}} H_{{\bf v}} (\z).
\end{align}

%Let $\mathscr{C}^0(M)$ denote the space of continuous functions $f \colon M \to \C$.

\begin{definition}[Trigonometric polynomial with respect to $\z$]
\label{def:trig_pol} 
Let $\z \subset Z(\mathfrak g)$ be any rational subspace with respect to $\Gamma$. A function $f \in \mathscr{C}^0(M)$ is called a \emph{trigonometric polynomial with respect to $\z$} if in the expansion in the above formula \eqref{eq:Fourier_exp} the family $(f_{{\bf v}})_{{{\bf v}}\in  \Lambda^\ast}$ has finite support.

For all ${{\bf v}} \in \Lambda^{\ast}$, let $d({{\bf v}}) \geq 0$ be the nonnegative integer defined by $\langle {{\bf v}}, \Lambda \rangle = d({{\bf v}}) \Z$. The \emph{degree} of a trigonometric polynomial $f$ is 
$$
\deg(f) = \max \{ d({{\bf v}}) : f_{{\bf v}} \not\equiv 0 \}.
$$
\end{definition}
Trigonometric polynomials with respect to $\z$ will also be called \emph{relative trigonometric polynomials} (when the toral fibers are clear from the context).  

\subsection{Basic properties of relative trigonometric polynomials}\label{sec:prop_trig}

The following lemma provides an estimate on the measure of sub-level sets of \emph{relative} trigonometric polynomials on nilmanifolds (see also \cite[Lemma 4]{AFU} for the case of trigonometric polynomials and Heisenberg nilflows).

\begin{lemma}[Level sets of relative trigonometric polynomials]
\label{thm:sublevel_sets_polynomials}
For each $m \geq 1$ there exist constants $\Delta_m$ and $d_m>0$ such that the following holds.
Let $f \in \bigoplus_{{{\bf v}} \in \Lambda^* \setminus \{ 0 \}} H_{{\bf v}} (\z)$ be a trigonometric polynomial with respect to $\z$ of degree $m$ without constant term in the fibers.  

For any $C \geq 0$ and $\varepsilon \geq 0$, if 
$$
\mu \left( x \in M : \sum_{{{\bf v}} \in \Lambda^\ast \setminus \{0 \}} |f_{{\bf v}}(x)| \leq C \right) \leq \varepsilon,
$$ 
then, for all $\delta >0$,
$$
\mu \left( x \in M : | f(x) | \leq \delta \sum_{{{\bf v}} \in \Lambda^\ast \setminus \{0 \}} |f_{{\bf v}}(x)| \right) \leq \Delta_m \delta^{d_m} + \varepsilon.
$$
Furthermore,
$$
\mu \left( x \in M : | f(x) | \leq \delta C \right) \leq \Delta_m \delta^{d_m} + \varepsilon.
$$      
\end{lemma}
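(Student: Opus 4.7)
The strategy is to reduce the global bound on $M$ to a fibrewise Remez-type sublevel-set estimate for trigonometric polynomials on the toral fibre $T=H/\Lambda$, and then to integrate over the base $\overline{M}$ using the disintegration~\eqref{eq:disintegration}. To begin, I would observe that, since each Fourier component $f_{{\bf v}}$ lies in the eigenspace $H_{{\bf v}}(\z)$, the modulus $|f_{{\bf v}}|$ is invariant under the central torus action, so that the sum $S:=\sum_{{\bf v}\neq 0}|f_{{\bf v}}|$ descends to a continuous function on $\overline{M}$. The hypothesis can then be rephrased as $\overline{\mu}(\{S\leq C\})\leq\varepsilon$. For any $x\in M$, the restriction of $f$ to the fibre through $x$ is the exponential polynomial
$$
P_{\overline{x}}({\bf t}):=\sum_{{\bf v}\neq 0}f_{{\bf v}}(x)\,e^{2\pi\imath\langle {\bf v},{\bf t}\rangle}, \qquad {\bf t}\in T,
$$
whose modulus depends only on $\overline{x}=\pi(x)$, has no constant term, and has degree at most $m$.

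The key technical ingredient is then a fibrewise Remez estimate: there should exist constants $\Delta_m,d_m>0$, depending only on $m$ and on the lattice $\Lambda$, such that for every such $P_{\overline{x}}$ and every $\delta>0$ one has
$$
\vol_T\!\left(\{{\bf t}\in T:|P_{\overline{x}}({\bf t})|\leq \delta\,S(\overline{x})\}\right)\leq \Delta_m\,\delta^{d_m}.
$$
To obtain this, I would restrict $P_{\overline{x}}$ to a suitable rational one-parameter subgroup of $T$ chosen so as to separate the finitely many active characters; the restriction becomes a one-dimensional exponential polynomial to which one may apply the classical Remez-type sublevel-set inequality, in the spirit of Lemma~4 of~\cite{AFU}. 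Alternatively, one can argue by induction on the dimension of $T$, using Fubini to peel off one toral factor at a time.

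Granted the fibrewise estimate, partition $\overline{M}=G\sqcup B$ with $G:=\{S>C\}$ and $B:=\{S\leq C\}$, so that $\overline{\mu}(B)\leq\varepsilon$. Integrating against the disintegrated measure, the fibrewise bound contributes at most $\Delta_m\delta^{d_m}$ over $G$ and at most $1$ (and hence at most $\varepsilon$ in total) over $B$, which yields the first inequality. For the second, on $\pi^{-1}(G)$ the inequality $|f|\leq \delta C$ forces $|f|<\delta S$, so $\{|f|\leq \delta C\}\cap\pi^{-1}(G)\subset\{|f|\leq \delta S\}$; combined with the trivial bound on $\pi^{-1}(B)$ this gives the same total estimate. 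The main obstacle is the fibrewise Remez inequality itself: the notion of degree from Definition~\ref{def:trig_pol} does not directly bound the cardinality of the Fourier support of $f$, so care is needed in the one-dimensional reduction to ensure that the constants $\Delta_m,d_m$ genuinely depend only on $m$ (and on the fixed nilmanifold).
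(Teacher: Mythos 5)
Your argument follows essentially the same route as the paper's proof: reduce to a fibrewise Remez-type sublevel-set bound on the toral fibre $T=H/\Lambda$ and integrate using the disintegration of Haar measure. Your set $B=\{S\leq C\}$ is the paper's $E_C$, and your fibrewise estimate against $\delta\,S(\overline{x})$ is precisely the estimate for the paper's normalized polynomial $\widehat{f}=f/\sum_{{\bf v}}|f_{{\bf v}}|$; your treatment of the second inequality also matches. Where you differ is in how the fibrewise Remez bound is obtained: the paper cites a result of Brudnyi \cite[Theorem~1.9]{Br} for trigonometric polynomials on the fibre, giving a sublevel-set bound $D_{m,p}\delta^{d_m}$ with $d_m$ depending only on $m$ and $D_{m,p}$ depending continuously on $p$, and then gets uniformity by noting that the normalized restrictions of the fixed $f$ lie in a compact family; you instead sketch a one-dimensional reduction along a suitable rational one-parameter subgroup in the style of \cite[Lemma~4]{AFU}, or a peeling induction on the dimension of $T$. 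Either is plausible, but you leave this key step as a sketch. The subtlety you flag is genuine: the degree from Definition~\ref{def:trig_pol} is defined via $d({\bf v})$ and does \emph{not} by itself bound the cardinality of the Fourier support of $f$. The paper's compactness argument works because the support of $f$ is a fixed finite set (automatic for a relative trigonometric polynomial), so the normalized restrictions range over a compact set for that fixed support; strictly, the constant $\Delta_m$ one obtains this way depends on that support, and not only on the degree $m$ as the statement suggests. This is harmless for the paper's applications, where $f$ is a fixed polynomial determined by the time-change, but your proposed one-dimensional reduction faces the same issue: to close the gap you would need to carry out the reduction and verify that the constants are uniform over the compact family of normalized fibrewise restrictions of the given $f$.
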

\begin{proof}
Let us recall that each toral fiber $\pi^{-1} \{ \overline{x} \}$ can be identified with the torus $T=H / \Lambda$ and the measure $\mu_{ \pi^{-1}\{\overline{x}\}}$ coincides (up to a constant) with the Lebesgue measure on $T$. By classical results on level sets of trigonometric polynomials, see, e.g., \cite[Theorem 1.9]{Br}, for each $m \geq 1$, if $p$ is a trigonometric polynomial defined on $\pi^{-1} \{ \overline{x} \}$ of degree $m \geq 1$, then there exist constants $D_{m,p}$ and $d_m>0$ such that for every $\delta >0$, we have 
\begin{equation}
\label{eq:estimate_for_p}
\mu_{\pi^{-1} \{ \overline{x} \} } \left( y \in \pi^{-1} \{ \overline{x} \} : | p(y) | \leq \delta \right) \leq D_{m,p} \delta^{d_m}.
\end{equation}
The constant $d_m$ depends only on $m$ and the constant $D_{m,p}$ depends continuously on~$p$.

Let us define 
$$
E_{C} := \left\{ x \in M : \sum_{{{\bf v}} \in \Lambda^\ast\setminus \{0 \}} |f_{{\bf v}}(x)| \leq C \right\}. 
$$
Recall that by assumption $\mu(E_{C})\leq \varepsilon$ and notice that, since $|f_{{\bf v}}(x)|$ is $\Phi^{\z} _{\bf t} (x)$-invariant, the set $E_{C}$ is a union of full fibers.
In particular, the normalized trigonometric polynomial 
$$
{\widehat f}(x) := \frac{f(x)}{\sum_{{{\bf v}} \in \Lambda^\ast \setminus \{0 \}} |f_{{\bf v}}(x)|}
$$
is well-defined on each fiber $\pi^{-1} \{ \overline{x} \} \subset M \setminus E_{C}$, and its restriction ${\widehat f}|_{\pi^{-1} \{ \overline{x} \}}$ to $\pi^{-1} \{ \overline{x} \}$, because of the normalization, lies in a compact subset of the space of trigonometric polynomials of degree $1 \leq \deg ({\widehat f}|_{\pi^{-1} \{ \overline{x} \}}) \leq m$.
Thus, there exist uniform positive constants $\Delta_m$, $d_m$ such that the estimate~\eqref{eq:estimate_for_p} holds for any ${\widehat f}$ as above.
By \eqref{eq:disintegration} and Fubini's Theorem, we deduce that the set
$$
M_{\delta} := \left\{ x \in M \setminus E_{C} : |{\widehat f}(x)| \leq \delta \right\} = \left\{ x \in M \setminus E_{C} : |f(x)| \leq \delta \sum_{{{\bf v}} \in \Lambda^\ast\setminus \{0 \}} |f_{{\bf v}}(x)| \right\} 
$$
has measure at most $\Delta_m \delta^{d_m}$. 
Therefore, for all $x \in M \setminus (M_{\delta} \cup E_{C})$, we have $$|f(x)| > \delta \sum_{{{\bf v}} \in \Lambda^\ast \setminus \{0 \}} |f_{{\bf v}}(x)| > \delta C.$$ Using that by assumption $\mu (M_{\delta} \cup E_{C} ) \leq \Delta_m \delta^{d_m} + \varepsilon$, the proof is complete.
\end{proof}

\subsection{Algebraic induction towers}\label{sec:induction} 
Let $M$ and $\{\phi^X_t\}_{t\in\mathbb{R}}$ be a nilmanifold and a nilflow satisfying the assumptions of Theorem~\ref{thm:main}. 
In this section we show that we can present $M$ as a \emph{tower} of central extensions of nilmanifolds, so that at each step in going down the tower we quotient out a central fiber, chosen in a way that will be convenient for us to show prevalence of mixing by induction in the later sections. 

\smallskip
The following notion of \emph{Heisenberg triple} is central in determining how to build the tower.
\begin{definition}[Heisenberg triple]\label{def:Heisenbergtriple}
Let $\mathfrak{g}$ be a nilpotent Lie algebra of step $k \geq 2$. A triple $(X,Y,Z)$ of elements of  $\mathfrak{g}$ is called a \emph{Heisenberg triple} if $Z \in \mathfrak g_k \subset Z(\mathfrak g)$ and $[X, Y]=Z$.
\end{definition} 

Our towers  will be constructed by successive quotients of the nilmanifold with respect to the action of tori of  the minimal dimension tangent to the central elements of a finite sequence of Heisenberg triples. For this purpose we introduce the following definition. 

\begin{definition}
For any nilpotent group $G$ with Lie algebra $\mathfrak g$ and for  any lattice $\Gamma <G$, for any central element  $Z \in Z(\z)$  the
\emph{rational envelope} of $Z$  is the smallest rational subspace $[Z]_\Gamma \subset Z(\z)$   with respect to $\Gamma$  such that $Z\in [Z]_\Gamma$.
\end{definition}

A fundamental algebraic result, proven below, states that any nilmanifold  $M := \Gamma \backslash G$ is given by a finite tower of Heisenberg extensions,  
in the following sense. 

\begin{definition}
\label{def:tower}
Let $M = M^{(0)} = \Gamma^{(0)} \backslash G^{(0)}$ be a nilmanifold and let $X = X^{(0)} \in \mathfrak g$  be an element not in $ [\mathfrak g, \mathfrak g]$.  
A \emph{tower $\mathcal{T}_{M,X}$ (of height $h\in \N$) of Heisenberg extensions for $M$ based at $X$} is a sequence of nilmanifolds $M^{(i)} = \Gamma^{(i)} \backslash G^{(i)}$, projections
$$
M = M^{(0)} \xrightarrow{\pi^{(1)}} M^{(1)} \xrightarrow{\pi^{(2)}} \cdots \xrightarrow{\pi^{(h)}} M^{(h)},
$$ 
and triples $(X^{(i)}, Y^{(i)}, Z^{(i)}) \in (\mathfrak{g}^{(i)})^3$, where $ \mathfrak{g}^{(i)}$ is the Lie algebra of the nilpotent group $G^{(i)}$, such that 
\begin{enumerate}
\item $(X^{(i)}, Y^{(i)}, Z^{(i)})$ is a Heisenberg triple in $\mathfrak{g}^{(i)}$ for all $0 \leq i \leq h$,\vspace{1.3mm}
\item for all $0 \leq i \leq h-1$,  if we set $\z^{(i)}:= [Z^{(i)}]_{\Gamma^{(i)}}$ and $\Lambda^{(i)}:=\log \Gamma^{(i)}\cap  [Z^{(i)}]_{\Gamma^{(i)}}$, we have 
$$\mathfrak{g}^{(i+1)} = \mathfrak{g}^{(i)} / \z^{(i)},\qquad % 
\Gamma^{(i+1)} =  \Gamma^{(i)} / \exp \Lambda^{(i)} =\Gamma^{(i)}/\left(\Gamma^{(i)} \cap \exp \z^{(i)} \right),$$ %$M^{(i+1)} = \Gamma^{(i+1)} \backslash G^{(i+1)}$ 
and $\pi^{(i+1)} \colon M^{(i)} \to M^{(i+1)}$ is the canonical projection, \vspace{1.3mm}
\item  $X^{(i+1)} = \pi^{(i+1)}_{\ast}(X^{(i)})$  for  all $0 \leq i \leq h-1$.
\end{enumerate}
A tower $\mathcal{T}_{M,X}$ is \emph{maximal} if $G^{(h)}$ is Abelian and $M^{(h)}$ is a torus. 
\end{definition}

  The following lemma, which will be used as one step of the inductive construction, will  guarantee the existence of Heisenberg triples at each step of the induction.

\begin{lemma}[Existence of Heisenberg triples\footnote{We owe the argument presented in the proof, which supersedes our original existence proof and yields the conclusion that it is possible to have $[Z]_\Gamma= \z_k$, 
to the anonimous referee).}]
\label{thm:Heisenberg_triple}  
Let $\mathfrak g$ be a nilpotent Lie algebra of step $k \geq 2$. For any completely irrational element $X \in \mathfrak g$  (with respect to $\Gamma^{Ab}$),  there exist 
$Y, Z$ such that $(X,Y,Z)$ is a Heisenberg triple  and $Z$ is completely irrational in $\mathfrak g_k$, in the sense that  $[Z]_\Gamma= \mathfrak g_k$.
\end{lemma}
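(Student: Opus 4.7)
The plan is to look for $Y$ inside $\mathfrak g_{k-1}$, which automatically places $Z := [X,Y]$ in $\mathfrak g_k \subseteq Z(\mathfrak g)$ and reduces the problem to a linear-algebra question about the restricted map $\mathrm{ad}_X : \mathfrak g_{k-1} \to \mathfrak g_k$. Fix a Malcev basis $\{E_i^j\}$ strongly based at $\Gamma$ (so the structure constants are rational), take $F_1, \dots, F_m$ to be the sub-basis of $\mathfrak g_{k-1}$ obtained from $\mathcal E^{k-1} \cup \mathcal E^k$, and expand $X \equiv \sum_{i=1}^{n_1} \omega_i E_i^1 \pmod{\mathfrak g_2}$, where $\omega_1, \dots, \omega_{n_1}$ are linearly independent over $\Q$ by complete irrationality of $X$.

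The key observation exploiting the step hypothesis is that $[\mathfrak g_2, \mathfrak g_{k-1}] \subseteq \mathfrak g_{k+1} = 0$. Hence $\mathrm{ad}_X$ on $\mathfrak g_{k-1}$ depends on $X$ only through its Abelianized class, and for each basis element $F_j \in \mathfrak g_{k-1}$ one has
\[
 [X, F_j] = \sum_{i=1}^{n_1} \omega_i\, [E_i^1, F_j],
\]
where each $[E_i^1, F_j]$ is a rational vector in $\mathfrak g_k$. The central claim I would prove is that the subspace $\mathrm{ad}_X(\mathfrak g_{k-1}) \subseteq \mathfrak g_k$ is not contained in any proper rational subspace $W \subsetneq \mathfrak g_k$. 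Once this is established, the lemma follows: there are only countably many proper rational subspaces $W$ of $\mathfrak g_k$, and for each such $W$ the set $\{Y \in \mathfrak g_{k-1} : [X,Y] \in W\}$ is a proper linear subspace of $\mathfrak g_{k-1}$, so any $Y$ outside this countable union yields a $Z := [X, Y]$ with $[Z]_\Gamma = \mathfrak g_k$ (and $Z \neq 0$, since $\{0\}$ is itself a proper rational subspace of $\mathfrak g_k \neq 0$).

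The claim itself I would prove by contradiction. If $\mathrm{ad}_X(\mathfrak g_{k-1}) \subseteq W$, pick a nonzero rational linear functional $\phi : \mathfrak g_k \to \R$ vanishing on $W$ (it exists because the annihilator $W^\perp \subset \mathfrak g_k^*$ is a nonzero rational subspace). Applying $\phi$ to the identity above gives $\sum_i \omega_i \phi([E_i^1, F_j]) = 0$ for every $j$, with \emph{rational} coefficients $\phi([E_i^1, F_j]) \in \Q$; the $\Q$-linear independence of $\omega_1,\dots,\omega_{n_1}$ then forces $\phi([E_i^1, F_j]) = 0$ for every $i, j$. But the vectors $\{[E_i^1, F_j]\}$ span $\mathfrak g_k = [\mathfrak g, \mathfrak g_{k-1}]$---using once more that brackets of $\mathfrak g_{k-1}$ with $\mathfrak g_j$ for $j \geq 2$ vanish, so only the first-layer generators $E_i^1$ contribute---contradicting $\phi \not\equiv 0$.

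I expect the most delicate piece of the write-up to be precisely this rationality-meets-irrationality transfer: one must carefully arrange that $\phi$ is rational, that the Malcev structure constants are rational, and that complete irrationality of $X$ is read off in the \emph{correct} Abelianized basis, so that $\Q$-linear independence of the $(\omega_i)$ can be cleanly played off against rational coefficients to force the covector to vanish on a spanning set of $\mathfrak g_k$. The hypothesis $k \geq 2$ enters both through $\mathfrak g_{k+1} = 0$ (so $[\mathfrak g_2, \mathfrak g_{k-1}]$ vanishes) and through $\mathfrak g_k \neq 0$ (ensuring $Z \neq 0$); no Diophantine condition beyond complete irrationality is needed.
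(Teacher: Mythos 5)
Your proof is correct. The core algebraic mechanism is the same as the paper's: restrict to $Y\in\mathfrak g_{k-1}$ so $Z=[X,Y]\in\mathfrak g_k$, use $[\mathfrak g_2,\mathfrak g_{k-1}]\subseteq\mathfrak g_{k+1}=0$ to see that $[X,\cdot]$ on $\mathfrak g_{k-1}$ depends only on the Abelianization of $X$, invoke the rationality of Malcev structure constants, note that $\{[E^1_i,E^{k-1}_j]\}$ spans $\mathfrak g_k$ (surjectivity of the induced map from $(\mathfrak g_1/\mathfrak g_2)\otimes(\mathfrak g_{k-1}/\mathfrak g_k)$), and play the $\Q$-linear independence of the $\omega_i$ against a rational annihilator.

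Where you diverge is in how $Y$ is produced. The paper constructs an explicit $Y=\sum_j y_j E^{k-1}_j$ by choosing $y_1,\dots,y_{n_{k-1}}$ algebraically independent over the number field $\mathbb K=\Q(\omega_1(X),\dots,\omega_n(X))$, then verifies directly that for any rational form $\lambda$ with $\lambda([X,Y])=0$ the $\mathbb K$-linear independence of the $y_j$ forces the $\mathbb K$-coefficients to vanish, and then the $\Q$-linear independence of the $\omega_i$ forces $\lambda$ to vanish on a spanning set. You instead phrase the same final contradiction as a structural claim (``$\mathrm{ad}_X(\mathfrak g_{k-1})$ lies in no proper rational subspace of $\mathfrak g_k$'') and then run a genericity argument: the bad set is a countable union of proper linear subspaces of $\mathfrak g_{k-1}$, hence not everything. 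Both routes are clean and equivalent in content; your version makes the genericity of the choice of $Y$ explicit (and slightly more transparent), while the paper's gives a concrete construction. Either is acceptable. One small cosmetic note: including the $\mathcal E^k$-part of $\mathfrak g_{k-1}$ among your $F_j$ is harmless but redundant, since those brackets with $E^1_i$ already vanish, as you implicitly use when you identify the spanning set of $\mathfrak g_k$.
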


\begin{proof} 
Let $E_1^1, E_2^1, \dots E_{n_1}^1, E_1^2, \dots, E_{n_2}^2, \dots, E_1^k, \dots, E_{n_k}^k$, with $n_1=n$, be a Malcev basis for $\mathfrak g$ through the descending central series $\mathfrak g_j$ and strongly based at $\Gamma$, as in Definition~\ref{def:Malcev}. 
For any $i, j  \geq 1$, we have $[\mathfrak g_i,  \mathfrak g_j]\subset {\mathfrak g}_{i+j}$, and therefore the map $q : (\mathfrak g_1/\mathfrak g_2)\otimes (\mathfrak g_{k-1}/\mathfrak g_k) \to \mathfrak g_k$ induced by the map 
$$
V \otimes W \in {\mathfrak g}_1 \otimes {\mathfrak g}_{k-1}   \to   [V,W] \in  {\mathfrak g}_k
$$
is well defined and surjective.

Let $b^s_{ij}$ denote structure constants of the Malcev basis defined by the identities
$$
[E^1_i, E^{k-1}_j] = \sum_{s=1} ^{n_k}   b^s_{ij} E^k_s\,, \quad \text{ for all } i=1, \dots, n_1 \text{ and } j=1, \dots, n_{k-1}\,,
$$
and let $\mathbb K$ denote the finite extension $\Q(\omega_1(X), \dots, \omega_n(X))$ (where $\omega_j(X)  $, for $1\leq j\leq n_1$, are defined by \eqref{eq:XAb}).
Let $y_1, \dots, y_{n_{k-1}}$ be real numbers such that the transcendence degree over $\mathbb K$  of the finite extension $\mathbb K(y_1, \dots, y_{n_{k-1}})$
is maximal, namely equal to $n_{k-1}$. We claim that, if we set 
$$
Y = \sum_{j=1}^{n_{k-1}}  y_j E^{k-1}_j \,,
$$
the element $Z=[X,Y] \in \mathfrak g_k$ is completely irrational in $\Gamma \cap \mathfrak g_k$, in the sense that it has rationally independent coordinates with
respect to a rational basis, hence its rational envelope is $[Z]_\Gamma= \mathfrak g_k$.  In fact, by its definition,
$$
Z=   \sum_{s=1}^{n_k}  \sum_{i=1}^{n_1} \sum_{j=1}^{n_{k-1}}  b^s_{ij} \omega_i(X)  y_j  E^k_s   \,,
$$
thus, given any rational linear form $\lambda$ on $\mathfrak g_k$, we have that
$$
\lambda(Z)=0   \Longleftrightarrow     \sum_{j=1}^{n_{k-1}}  \left( \sum_{s=1}^{n_k}  \sum_{i=1}^{n_1} b^s_{ij} \omega_i(X)   \lambda(E^k_s) \right) y_j \,,
$$
which, since $y_1, \dots, y_{n_k}$ are linearly independent over $\mathbb K$, implies  
$$
 \sum_{s=1}^{n_k}  \sum_{i=1}^{n_1} b^s_{ij} \omega_i(X)   \lambda(E^k_s)   =0,   \quad \text{ for all } j=1, \dots, n_k\,,
$$
and in turn, since $\omega_1(X), \dots, \omega_{n_1}(X)$ are linearly independent over $\Q$, this gives that
$$
\lambda ([E^1_i, E^{k-1}_j])= \sum_{s=1}^{n_k}  b^s_{ij}   \lambda(E^k_s)   =0,   \quad \text{ for all } i=1, \dots, n_1 \text{ and } j=1, \dots, n_k\,.
$$ 
Finally, the above condition implies that  for any $V= \sum_{i=1}^{n_1} V_i E^1_i \in \mathfrak g_1$ and $W= \sum_{j=1}^{n_{k-1}} W_j E^{k-1}_j \in \mathfrak g_{k-1}$,
$$
\lambda ([V,W])= \sum_{i=1}^{n_1} \sum_{j=1}^{n_{k-1}}     V_i W_j   \sum_{s=1}^{n_k}  b^s_{ij}  \lambda(E^k_s) =0\,,
$$
hence $\lambda$ vanishes on the range of the map $q$ defined above, which is surjective onto $\mathfrak g_k$. We conclude that 
$\lambda(\mathfrak g_k)=0$, that is $\lambda=0$. The argument is complete.
\end{proof}

Iterating inductively the previous Lemma (as shown in Corollary \ref{corollary:existence_of_towers} below), we can construct   \emph{towers of extensions} corresponding to Heisenberg triples, in the following sense. 

\begin{corollary}
\label{corollary:existence_of_towers}
If $X \in \mathfrak{g}$ is the generator of a uniquely ergodic nilflow on $M$, then there exists a maximal tower of Heisenberg extensions for $M$ based at $X$,
which has height $h=k-1$ and is given by the descending central series of the Lie algebra $\mathfrak g$,
$$
\mathfrak g_1= \mathfrak g \supset \mathfrak g_1 =[\mathfrak g, \mathfrak g] \supset \mathfrak g_j =[\mathfrak g_{j-1}, \mathfrak g] \supset \dots \supset \mathfrak g_k 
\subset Z(\mathfrak g)
$$
as follows.  In the notation of Definition~\ref{def:tower}, for every $j=0,\dots, k-1$,  the algebra ${\mathfrak g}^{(j)}$ is $k-j$-step nilpotent  and the Abelian subalgebra 
$\z^{(j)}$ is the last non-trivial subalgebra $\mathfrak g^{(j)}_{k-j}$ in the descending central series of $\mathfrak g^{(j)}$, and we have
$$
\mathfrak g^{(j)} \equiv  \mathfrak g / (\mathfrak g_k + \dots + \mathfrak g_{k-j+1}) \,,   \quad  \z^{(j)} \equiv  
 \mathfrak g_{k-j} / (\mathfrak g_k + \dots + \mathfrak g_{k-j+1})\,.
$$ 
\end{corollary}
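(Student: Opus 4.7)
The plan is to build the tower by a finite induction on the level $j$, starting from $\mathfrak{g}^{(0)} := \mathfrak{g}$, $\Gamma^{(0)} := \Gamma$, $X^{(0)} := X$, and at each non-terminal step applying Lemma \ref{thm:Heisenberg_triple} to produce the required Heisenberg triple whose central element has rational envelope equal to the last non-zero term of the descending central series of $\mathfrak{g}^{(j)}$. The identification $\z^{(j)} = \mathfrak{g}^{(j)}_{k-j}$ is carried along as part of the inductive hypothesis and is what feeds the next application of the lemma.

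For the base case, since $\phi^X$ is uniquely ergodic, Theorem \ref{th:Green}(iv) guarantees that $X$ is completely irrational with respect to $\Gamma^{Ab}$. Lemma \ref{thm:Heisenberg_triple} then supplies $Y^{(0)}, Z^{(0)} \in \mathfrak{g}$ so that $(X, Y^{(0)}, Z^{(0)})$ is a Heisenberg triple and $[Z^{(0)}]_{\Gamma} = \mathfrak{g}_k$. Setting $\z^{(0)} := [Z^{(0)}]_{\Gamma^{(0)}}$ yields $\z^{(0)} = \mathfrak{g}^{(0)}_k$, matching the stated identification at level $j = 0$.

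For the inductive step, suppose that for some $0 \leq j \leq k-2$ we have constructed $\mathfrak{g}^{(j)} \equiv \mathfrak{g}/(\mathfrak{g}_k + \cdots + \mathfrak{g}_{k-j+1})$ together with a Heisenberg triple $(X^{(j)}, Y^{(j)}, Z^{(j)})$ at level $j$ satisfying $\z^{(j)} = \mathfrak{g}^{(j)}_{k-j}$. Define $\mathfrak{g}^{(j+1)} := \mathfrak{g}^{(j)}/\z^{(j)}$ and $\Gamma^{(j+1)} := \Gamma^{(j)}/(\Gamma^{(j)} \cap \exp \z^{(j)})$; the latter is a lattice in $G^{(j+1)}$ thanks to the rationality built into the definition of the rational envelope $\z^{(j)} = [Z^{(j)}]_{\Gamma^{(j)}}$. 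Quotienting out the last non-zero term of the descending central series drops the step by exactly one, so $\mathfrak{g}^{(j+1)}$ is $(k-j-1)$-step nilpotent, and the identification
$$\mathfrak{g}^{(j+1)} \equiv \mathfrak{g}/(\mathfrak{g}_k + \cdots + \mathfrak{g}_{k-j})$$
is immediate. The crucial observation is that every subalgebra we quotient out is contained in $[\mathfrak{g}, \mathfrak{g}]$, so Abelianization is preserved: the projection $X^{(j+1)} := \pi^{(j+1)}_{\ast} X^{(j)}$ has the same Abelianized coordinates as $X$ and hence remains completely irrational with respect to the Abelianized lattice of $\Gamma^{(j+1)}$. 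Provided $k - j - 1 \geq 2$, a fresh application of Lemma \ref{thm:Heisenberg_triple} inside $\mathfrak{g}^{(j+1)}$ yields the next Heisenberg triple with $\z^{(j+1)} = \mathfrak{g}^{(j+1)}_{k-j-1}$, closing the induction.

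The process terminates at $j = k - 1$, where $\mathfrak{g}^{(k-1)} \equiv \mathfrak{g}/[\mathfrak{g}, \mathfrak{g}]$ is Abelian and $M^{(k-1)}$ is a torus, so the tower is maximal of height $h = k - 1$; at this terminal level it suffices to assign the trivial Heisenberg triple $(X^{(k-1)}, 0, 0)$ to satisfy condition (1) of Definition \ref{def:tower}. The main obstacle in the argument is precisely the preservation of complete irrationality of the generator as one descends the tower, since this is what allows Lemma \ref{thm:Heisenberg_triple} to remain applicable at every non-terminal level; all other ingredients (quotient identifications, lattice structure, and persistence of the descending central series under the quotients) follow routinely from the fact that we are quotienting only by rational subspaces lying in the commutator ideal.
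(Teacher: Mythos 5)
Your proof is correct and follows essentially the same strategy as the paper: finite induction along the descending central series, applying Lemma~\ref{thm:Heisenberg_triple} at each non-Abelian level and using the special conclusion that the rational envelope of $Z^{(j)}$ equals $\mathfrak g^{(j)}_{k-j}$.

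The one noticeable divergence is in how you justify that $X^{(j+1)}$ remains completely irrational. The paper's proof invokes Theorem~\ref{th:Green} at each level: unique ergodicity passes to the factor nilflow on $M^{(j+1)}$, and Theorem~\ref{th:Green}(iv) then gives complete irrationality. You instead observe directly that $\z^{(j)} = \mathfrak g^{(j)}_{k-j} \subset [\mathfrak g^{(j)}, \mathfrak g^{(j)}]$ when $k-j \geq 2$, so quotienting does not change the Abelianization, and the Abelianized coordinates of $X^{(j+1)}$ literally coincide with those of $X$. Your route is a bit more elementary and self-contained (it avoids re-invoking the equivalence of ergodicity, minimality, and complete irrationality), though of course the two arguments are equivalent in substance. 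Both versions have the same marginal wrinkle at the terminal level $j = k-1$, where $\mathfrak g^{(k-1)}$ is Abelian and Definition~\ref{def:Heisenbergtriple} (which requires step at least $2$) does not literally apply; your device of taking the degenerate triple $(X^{(k-1)}, 0, 0)$ is a reasonable fix, though it steps slightly outside the letter of that definition.
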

\begin{proof}
The proof follows from Lemma \ref{thm:Heisenberg_triple} by induction on $\dim [\mathfrak{g},\mathfrak{g}]$ or on the nilpotency depth. 
For each $i=0,\dots, h-1$, Theorem~\ref{th:Green} implies that $X^{(i)}  \in \mathfrak g^{(i)}$ is completely irrational (with respect to the Abelianized of $\Gamma^{(i)}$), 
thus one can apply Lemma \ref{thm:Heisenberg_triple} to get a Heisenberg triple $(X^{(i)}, Y^{(i)}, Z^{(i)})$ such that the rational envelope of $Z^{(i)}$ is $[Z^{(i)}]_{\Gamma^{(i)}} = \mathfrak g^{(i)}_{k-i}$, and a nilmanifold $M^{(i+1)}$ with nilpotency depth exactly one less than that of $M^{(i)}$.  In this construction the  dimension $\dim [\mathfrak{g}^{(i+1)},\mathfrak{g}^{(i+1)}]$  is strictly lower  than the dimension $\dim [\mathfrak{g}^{(i)},\mathfrak{g}^{(i)}]$, and the nilpotency depth of $\mathfrak g^{(i+1)}$ is lower than that of $\mathfrak g^{(i+1)}$, thus  this concludes the proof. 
\end{proof}

\subsection{The class of time-changes}\label{sec:time_changes} 
 We now have all the tools to define the dense class of time-changes we will consider to prove Theorem~\ref{thm:main}. As in the assumptions of Theorem~\ref{thm:main},  let $M$ be any nilmanifold of step at least $2$  and let $X$ be 
 a completely irrational nilflow  $X$ on $M$. Recall that (maximal) towers of Heisenberg extensions were defined in Definition~\ref{def:tower}.
 
\begin{definition}[Trigonometric polynomials with respect to a tower]\label{def:trig_pol_tower}
Let $\mathcal{T}=\mathcal{T}_{M,X}$ be a maximal tower of Heisenberg extensions for $M$ based at $X$. We define the space of trigonometric polynomials $\mathscr{P}_{\mathcal{T}}$ with respect to $\mathcal{T}$ inductively in the following way.
\begin{enumerate}
\item Let $\mathscr{P}^{(h)}$ denote the space of trigonometric polynomials over the torus $M^{(h)}$.\vspace{1.3mm}
\item For all $0 \leq i \leq h-1$, let $\mathscr{P}^{(i)}$ be the space of trigonometric polynomials $f \colon M^{(i)} \to \C$ with respect to $[Z^{(i)}]_{\Gamma^{(i)}}$, where the coefficient $f_{0}$ is in $\mathscr{P}^{(i+1)}$.\vspace{1.3mm}
\item Define $\mathscr{P}_{\mathcal{T}} := \mathscr{P}^{(0)}$.
\end{enumerate}
\end{definition}
The dichotomy in Theorem~\ref{thm:main} will be proved for \emph{time-changes} in $\mathscr{P}_{\mathcal{T}} $, i.e.~ positive, real-valued trigonometric polynomials $\alpha \in \mathscr{P}_{\mathcal{T}}$ with respect to $\mathcal{T}$. 

\begin{remark}\label{remark:independent}
The notion of trigonometric polynomials with respect to a tower in Definition \ref{def:trig_pol_tower} depends only on the rational envelopes of the central elements $Z^{(i)}$. In particular, the set of trigonometric polynomials with respect to the tower of Corollary \ref{corollary:existence_of_towers} is the same for every completely irrational nilflows, so that the space $\mathscr{P}$ in the statement of Theorem \ref{thm:main} can be taken independent of the nilflow. 
In general, the choice of a maximal tower is not unique, and the height of any maximal tower is bounded below by $k$ (the case of Corollary \ref{corollary:existence_of_towers}).  It is easy to see that the space of trigonometric polynomials with respect to a given maximal tower is a subset of the set of trigonometric polynomials for any of its refinements, that is
for any maximal tower of greater height which has additional intermediate nilmanifolds.
\end{remark}

Let $\mathscr{C}^k(M)$ denote the space of  $k$-times differentiable functions for $k\in \N \cup \{+\infty\}$.  We show now that $\mathscr{P}_{\mathcal{T}} $ is \emph{dense} in $\mathscr{C}^k(M)$  (see Corollary~\ref{cor:density}). The key inductive step is provided by the following Lemma.

\smallskip
Let $\overline{M} = M/H$, with $H= \exp \z$, as in the beginning of \S\ref{sec:trig_pol}, and consider relative trigonometric polynomials with respect to $\z$.
\begin{lemma}[Relative density] 
Let $\mathcal F \subset \mathscr{C}^k(\overline{M})$ be a dense set of functions. The set of relative trigonometric polynomials $f$  with respect to $z$ such that $f_0 \in \mathcal F$ is dense in $\mathscr{C}^k(M)$.
\end{lemma}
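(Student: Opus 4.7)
The plan is a two-stage approximation: first approximate $g \in \mathscr{C}^k(M)$ by a relative trigonometric polynomial with an \emph{arbitrary} constant term along fibers, then replace this constant term by a function from $\mathcal{F}$ using the assumed density.

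For the first stage, I would convolve $g$ along each toral fiber with the $d$-dimensional Fej\'er kernel $K_N$ of order $N$ on $T = H/\Lambda$, setting
$$f_N(x) := \int_T g\bigl(\Phi^{\z}_{\mathbf{t}}(x)\bigr)\,K_N(\mathbf{t})\,\diff\mathbf{t}.$$
Since $K_N$ is itself a trigonometric polynomial of degree at most $N$ on $T$, expanding it and integrating term-by-term shows that the fiber Fourier coefficients of $f_N$ have finite support, so $f_N$ is a trigonometric polynomial with respect to $\z$ in the sense of Definition~\ref{def:trig_pol}. Its constant term is $(f_N)_0 = \pi^*\bar{g}_0$, where $\bar{g}_0 := \int_T g\circ \Phi^{\z}_{\mathbf{t}}\,\diff\mathbf{t}$ descends to a function in $\mathscr{C}^k(\overline{M})$. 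For the convergence $f_N \to g$ in $\mathscr{C}^k(M)$ the crucial observation is that $\z \subset Z(\mathfrak{g})$, which implies that every left-invariant vector field on $M$ (corresponding to an element of $\mathfrak{g}$ acting by right multiplication) commutes with the right $T$-action; consequently, for any left-invariant differential operator $D$ of order at most $k$,
$$D f_N(x) = \int_T (Dg)\bigl(\Phi^{\z}_{\mathbf{t}}(x)\bigr)\,K_N(\mathbf{t})\,\diff\mathbf{t}.$$
Standard Fej\'er summability on $T$ then gives $Df_N \to Dg$ uniformly on $M$, and since left-invariant differential operators of order $\leq k$ span all such operators one concludes $f_N \to g$ in $\mathscr{C}^k(M)$. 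Fix $N$ so that $\|g - f_N\|_{\mathscr{C}^k(M)} < \varepsilon/2$.

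For the second stage, using density of $\mathcal{F}$ in $\mathscr{C}^k(\overline{M})$, choose $h \in \mathcal{F}$ with $\|\bar{g}_0 - h\|_{\mathscr{C}^k(\overline{M})} < \varepsilon/(2C)$, where $C$ is the operator norm of the continuous pullback $\pi^{\ast} : \mathscr{C}^k(\overline{M}) \to \mathscr{C}^k(M)$, which is finite because $\pi$ is a smooth submersion of compact manifolds. Set
$$f := f_N - \pi^{\ast}\bar{g}_0 + \pi^{\ast} h\,;$$
this has the same non-zero fiber Fourier coefficients as $f_N$ but constant coefficient $h \in \mathcal{F}$, so $f$ is a trigonometric polynomial with respect to $\z$ with $f_0 \in \mathcal{F}$. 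The triangle inequality yields $\|g - f\|_{\mathscr{C}^k(M)} < \varepsilon$.

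The only delicate point is the $\mathscr{C}^k$-convergence in the first stage: a priori, fiber-wise convolution only controls derivatives tangent to the fibers, and extending to the full $\mathscr{C}^k(M)$-norm requires that the convolution commute with every left-invariant differentiation on $M$, which holds precisely because $\z$ is central in $\mathfrak{g}$. Once this commutation is in place, the rest of the argument is a straightforward combination of classical Fej\'er summability on a torus with a single application of the density of $\mathcal{F}$.
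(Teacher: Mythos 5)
Your proof is correct, and it follows the same overall strategy as the paper (fiberwise Fourier analysis, commuting differentiation past the fiber integration via the centrality of $\z$, and finally swapping the constant term for an element of $\mathcal F$), but the key technical step differs. The paper first observes that for $f\in\mathscr{C}^\infty(M)$ the fiberwise Fourier coefficients $f_{\bf v}$ decay faster than any polynomial in $\mathscr{C}^k(M)$ (by integration by parts, using that $\z\subset Z(\mathfrak g)$), so that the \emph{partial sums} of the Fourier series converge in $\mathscr{C}^k$; it then invokes density of $\mathscr{C}^\infty(M)$ in $\mathscr{C}^k(M)$ to conclude. You avoid the detour through $\mathscr{C}^\infty$ by replacing partial sums with Fej\'er means $f_N$, which converge in $\mathscr{C}^k$ directly for any $g\in\mathscr{C}^k(M)$, again because centrality gives $Df_N = (Dg)_N$ for every left-invariant operator $D$ of order $\le k$. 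This buys you a slightly more self-contained argument (no appeal to density of smooth functions, only classical Fej\'er summability on a torus), at the cost of introducing the summability kernel explicitly; the paper's version is marginally shorter since the rapid-decay estimate for smooth functions is needed elsewhere in such arguments anyway. Your observation that the Fej\'er construction leaves the zeroth coefficient unchanged (so $(f_N)_0 = \pi^*\bar g_0$) and your step of subtracting $\pi^*\bar g_0$ and adding $\pi^*h$ are both sound; the continuity of $\pi^*:\mathscr{C}^k(\overline{M})\to\mathscr{C}^k(M)$ is correctly invoked. The one point worth stating more explicitly is the standard fact that the $\mathscr{C}^k(M)$ norm on the parallelizable manifold $M=\Gamma\backslash G$ is controlled by iterated applications of a global frame of right-invariant vector fields, which is what justifies passing from uniform convergence of $Df_N\to Dg$ over such $D$ to genuine $\mathscr{C}^k$-convergence.
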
 
\begin{proof}
Since the group $\exp \z$ acts through the torus $\exp \z  / \Gamma \cap \exp \z$, which is a compact
Abelian group, by the theory of unitary representations for compact Abelian groups, and  by formula~\eqref{eq:Fourier_exp},  for any function $f \in \mathscr{C}^k(M)$ 
there exists an $L^2$-orthogonal expansion,
$$
f = \sum_{{{\bf v}}\in \Lambda^\ast} f_{{\bf v}} \,, \qquad \text{ with } \quad   f_{{\bf v}} = \int_{T}   \exp(-2\pi\imath \langle {{\bf v}}, {\bf t}\rangle ) f \circ \Phi^{\z} _{\bf t} (x)  \diff {\bf t}\,,
\quad \text{ for all } {\bf v}\in \Lambda\,,
$$
into eigenfunctions of the action of $H$ on $M$.   If $f \in \mathscr{C}^\infty(M)$, then by integration by parts, since $\z \subset Z(\mathfrak g)$, we derive that
the above series converges to zero faster than any polynomial in the Banach space $\mathscr{C}^k(M)$. It follows that the space $\mathscr{C}^\infty(M)$ belongs to the closure in $\mathscr{C}^k(M)$ of the subspace of relative trigonometric polynomials (with respect to $\z \subset Z(\mathfrak g)$ ). 

Since the space $\mathscr{C}^{\infty}(M)$ is a dense 
subspace of $\mathscr{C}^k(M)$, it follows that the subspace of relative trigonometric polynomials is dense in $\mathscr{C}^k(M)$.  Finally, the function $f_0$ is invariant under the action the subgroup $H=\exp \z$, hence it is the pull-back of a function defined on $\overline{M}$. Since the set $\mathcal F \subset \mathscr{C}^k(\overline{M})$ is dense in 
$\mathscr{C}^k(\overline{M})$, it follows immediately that the set of all trigonometric polynomials with respect to $\z$ such that $f_0 \in \mathcal F$  is dense in $\mathscr{C}^k(M)$. 
\end{proof}

\begin{corollary}[Density of $\mathscr{P}_{\mathcal{T}}$ ]\label{cor:density}
Let $\mathcal{T}=\mathcal{T}_{M,X}$ be any maximal tower of Heisenberg extensions for $M$ based at $X$. The space of trigonometric polynomials $\mathscr{P}_{\mathcal{T}}$ is dense in $\mathscr{C}^\infty(M)$.
\end{corollary}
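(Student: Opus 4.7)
The plan is to establish density by downward induction on the level $i$ in the tower $\mathcal{T}$, using the relative density lemma proved just above as the inductive step. Concretely, I would prove by descending induction on $i\in\{h,h-1,\dots,0\}$ the statement that $\mathscr{P}^{(i)}$ is dense in $\mathscr{C}^\infty(M^{(i)})$; the desired conclusion is then the case $i=0$, since by definition $\mathscr{P}_{\mathcal{T}}=\mathscr{P}^{(0)}$.

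For the base case $i=h$, recall that by maximality of the tower the nilmanifold $M^{(h)}$ is a torus, and $\mathscr{P}^{(h)}$ is by definition the set of trigonometric polynomials on $M^{(h)}$. Density of $\mathscr{P}^{(h)}$ in $\mathscr{C}^\infty(M^{(h)})$ is then a classical fact: smooth functions on a torus have absolutely convergent Fourier series (and truncations converge in every $\mathscr{C}^k$ norm by repeated integration by parts, the dual Fourier coefficients decaying faster than any polynomial).

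For the inductive step, assume $\mathscr{P}^{(i+1)}$ is dense in $\mathscr{C}^\infty(M^{(i+1)})$. By construction (item (2) of Definition~\ref{def:tower}), the nilmanifold $M^{(i+1)}$ is precisely the quotient of $M^{(i)}$ by the action of the torus $\exp\z^{(i)}/(\exp\z^{(i)}\cap \Gamma^{(i)})$ associated to the central rational subalgebra $\z^{(i)}=[Z^{(i)}]_{\Gamma^{(i)}}$, and $\mathscr{P}^{(i)}$ is by definition the set of trigonometric polynomials on $M^{(i)}$ with respect to $\z^{(i)}$ whose zero Fourier coefficient $f_0$ lies in $\mathscr{P}^{(i+1)}$. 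Applying the relative density lemma with $M\rightsquigarrow M^{(i)}$, $\overline{M}\rightsquigarrow M^{(i+1)}$, $\z\rightsquigarrow \z^{(i)}$ and $\mathcal{F}\rightsquigarrow \mathscr{P}^{(i+1)}$ (which is dense in $\mathscr{C}^\infty(M^{(i+1)})$ by the inductive hypothesis) yields density of $\mathscr{P}^{(i)}$ in $\mathscr{C}^\infty(M^{(i)})$, completing the induction.

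Because the induction just threads together the relative density lemma along finitely many levels of the tower, there is essentially no obstacle: the substantive content has already been packaged into the preceding lemma. The only point that requires a moment's care is checking that the hypotheses of that lemma are satisfied at each level, i.e.\ that $\z^{(i)}\subset Z(\mathfrak g^{(i)})$ is rational with respect to $\Gamma^{(i)}$; but this is immediate from the definition of $\z^{(i)}=[Z^{(i)}]_{\Gamma^{(i)}}$ as a rational envelope, together with the fact that in the tower of Corollary~\ref{corollary:existence_of_towers} one has $\z^{(i)}=\mathfrak g^{(i)}_{k-i}\subset Z(\mathfrak g^{(i)})$.
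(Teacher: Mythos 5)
Your proof is correct and follows exactly the route the paper leaves implicit: the corollary is stated immediately after the Relative Density Lemma precisely because the intended argument is the finite downward induction you spell out, with the torus Fourier-series approximation as base case and the lemma as the inductive step.

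One small remark: your last paragraph justifies rationality and centrality of $\z^{(i)}$ by appealing specifically to the tower of Corollary~\ref{corollary:existence_of_towers}, but the corollary is stated for \emph{any} maximal tower. The appeal is unnecessary: for any tower as in Definition~\ref{def:tower}, $(X^{(i)},Y^{(i)},Z^{(i)})$ is required to be a Heisenberg triple, so $Z^{(i)}\in\mathfrak g^{(i)}_k\subset Z(\mathfrak g^{(i)})$, and the rational envelope $[Z^{(i)}]_{\Gamma^{(i)}}$ is by its very definition a $\Gamma^{(i)}$-rational subspace of the center. Thus the hypothesis of the Relative Density Lemma is satisfied at every level of every tower, not just the one constructed from the descending central series.
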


\section{Tools for mixing}\label{sec:tools_for_mixing}
In this section we prove several preliminary results which will provide fundamental tools for proving mixing of non-trivial time-changes in our class. In \S\ref{sec:shearing} we show how mixing can be deduced from  a form of \emph{shearing} (in measure) of curves in a certain direction. In \S\ref{sec:pushing_curves} we compute the pushforward of curves by the flow. Finally, in the remaining sections, we prove a result on growth of ergodic sums for time-changes which are not given by coboundaries in the central fiber (see Theorem~\ref{thm:stretch_ergodic_integral}), which will be used to produce \emph{shearing} in previously  isometric directions. An outline of the proof of this result is given within \S\ref{sec:growth}.

\smallskip

\smallskip
We recall the setting: $M$ is a nilmanifold of step at least two  and  $\phi^X:=\{\phi^X_t\}_{t\in \R}$ is a uniquely ergodic niflow 
 generated by a vector field $X$ on $M$. Given any $\mathscr{C}^\infty$ function $\alpha\colon M \to \R^+$, we consider the time-change $\phi^V:=\phi^{\frac{1}{\alpha} X}= \{\phi^V_t\}_{t\in \R}$ of $\{\phi^X_t\}_{t\in \R}$  
with generator given by the formula
$$
V=\frac{1}{\alpha} X.
$$
 From now on, throughout the rest of the paper, $\phi^V$ to denote the time change of $\phi^X$ with generator $V=\frac{1}{\alpha} X$. Remark that if $\phi^X$ preserves the Haar measure $\mu$, $\phi^V$ preserves the measure $\alpha \mu$ (which is absolutely continuous with respect to Haar when $\alpha$ is smooth).

\subsection{Mixing via shearing} \label{sec:shearing}
 Let us consider an arc  $\{ \phi^W_r (x), \, 0\leq r\leq s\}$ of the flow $\phi^W$ generated by some $W \in \mathfrak{g}$ and let us \emph{push} it via the flow  $ \phi^V$. When these pushed arcs  $\{ \phi^V_t \circ \phi^W_r (x), \, 0\leq r\leq s\}$  \emph{shear} in the direction of $V$, one can hope to show, exploiting equidistribution of trajectories of $ \phi^V$,  that  they \emph{equidistribute} in $M$; in particular one expects that for every $f\in L^2(M, \alpha\diff\mu)$ of zero $\phi^V$-invariant mean, the functions  
$\int_0^s  f\circ \phi^V_t\circ \phi^W_r (x) \diff r$ converge to zero at $t$ grows.

Lemma~\ref{lemma:correlationseasy} below shows  that if $\int_0^s  f\circ \phi^V_t\circ \phi^W_r (x) \diff r$ converge to zero in measure (as well as  in a weaker sense, see Lemma~\ref{lemma:correlations}), this is sufficient to control correlations. Lemma~\ref{lemma:correlationseasy} and Lemma~\ref{lemma:correlations}  provide our main criterion to prove mixing of the time-changed flow  $\phi^V$ and make precise   
the  heuristic mechanism of \emph{mixing via shearing},  
mentioned in the introduction as a key strategy for proving mixing of parabolic flows.

\begin{lemma}[Mixing via shearing]
\label{lemma:correlationseasy}
Let $W\in \mathfrak g$ be any vector field and let $f\in L^2(M,\diff\mu)$ be bounded. Let us assume that there exists a $\sigma >0$ such that for all $ s\in [0, \sigma]$, the functions $\int_0^s  f\circ \phi^V_t\circ \phi^W_r (x) \diff r$ converge to zero in measure, i.e.~for every $\eta>0$ and any $\delta>0$ there exists a   $T >0$ such that for every $t\geq T$
$$
\mu \left( x \in M : \left\lvert \int_0^s  f\circ \phi^V_t\circ \phi^W_r (x) \diff r \right\rvert \geq \eta \right) \leq \delta.
$$
Then, for every $g\in L^2(M,\diff\mu)$ such that $Wg\in L^2(M,\diff\mu)$ we have
$$
\langle f\circ \phi_t^V , g  \rangle_{L^2(M,\diff \mu)}  \to 0\,.
$$
\end{lemma}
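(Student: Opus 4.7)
The plan is to exploit the invariance of the Haar measure $\mu$ under the flow $\phi^W$ (which holds because $\phi^W$ is itself a nilflow on $M$) to rewrite the correlation as an average over $r \in [0,s]$, and then perform an integration by parts in $r$ that transfers the $W$-differentiation from the ergodic integral of $f \circ \phi_t^V$ onto $g$. Concretely, for any $s \in (0, \sigma]$, by the $\phi^W$-invariance of $\mu$,
$$
\langle f\circ \phi_t^V , g  \rangle = \frac{1}{s}\int_0^s \int_M f\circ \phi_t^V \circ \phi^W_r(x)\, g\circ \phi^W_r(x)\, \diff\mu(x)\, \diff r,
$$
and Fubini lets us interchange the $r$- and $x$-integrations.

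Set $F_t(r,x) := \int_0^r f\circ \phi^V_t\circ \phi^W_u(x)\, \diff u$, so that $\partial_r F_t(r,x) = f\circ \phi^V_t \circ \phi^W_r(x)$ and $F_t(0,x) = 0$. Then $\frac{d}{dr}\bigl(g\circ \phi^W_r\bigr)(x) = (Wg)\circ \phi^W_r(x)$, and integration by parts in $r$ yields
$$
\int_0^s f\circ \phi_t^V \circ \phi^W_r(x)\, g\circ \phi^W_r(x)\, \diff r = F_t(s,x)\, g\circ \phi^W_s(x) - \int_0^s F_t(r,x)\, (Wg)\circ \phi^W_r(x)\, \diff r.
$$
Averaging this identity over $M$ and dividing by $s$, the correlation is rewritten as
$$
\langle f\circ \phi_t^V,g\rangle = \frac{1}{s}\int_M F_t(s,x) g\circ\phi^W_s(x)\,\diff\mu(x) - \frac{1}{s}\int_0^s\!\!\int_M F_t(r,x)\,(Wg)\circ \phi^W_r(x)\,\diff\mu(x)\,\diff r.
$$

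It now remains to show both terms on the right vanish as $t \to \infty$. Since $f$ is bounded, the family $\{F_t(r,\cdot)\}_{t}$ is uniformly bounded by $s\|f\|_\infty$ for $r \in [0,s]$. The hypothesis provides convergence $F_t(r,\cdot)\to 0$ in measure for every $r \in [0,\sigma]$, and combined with uniform boundedness this upgrades to convergence in $L^2(M,\diff\mu)$. Applying Cauchy--Schwarz with $g \circ \phi^W_s \in L^2$ handles the boundary term, while for the double integral one uses Cauchy--Schwarz in $x$ (against $Wg\in L^2$) to bound the inner integral uniformly in $r$ by $\|F_t(r,\cdot)\|_{L^2}\,\|Wg\|_{L^2}$ and then applies dominated convergence in $r$ (the integrand being dominated by $s\|f\|_\infty\|Wg\|_{L^2}$). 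Both terms therefore tend to zero.

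The main technical point to watch is ensuring that $L^2$-convergence of $F_t(r,\cdot)$ is genuinely uniform enough in $r$ to apply dominated convergence on $[0,s]$; however, the uniform sup-norm bound $\|F_t(r,\cdot)\|_\infty \leq s\|f\|_\infty$ combined with pointwise-in-$r$ convergence in measure is exactly what is needed, so no further regularity on $f$ or $g$ beyond the stated hypotheses is required. This reduces mixing to a \emph{shearing in measure} statement along the direction $W$, which is the geometric mechanism the paper will verify case by case.
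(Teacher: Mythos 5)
Your argument is correct and follows essentially the same route as the paper: average the correlation over $[0,s]$ using $\phi^W$-invariance of Haar measure, integrate by parts in $r$ to move the derivative onto $g$ (this is precisely equation~\eqref{eq:int_by_parts} in the paper), and then estimate the two resulting terms using the in-measure hypothesis together with the uniform sup bound $\|F_t(r,\cdot)\|_\infty \le \sigma\|f\|_\infty$. The only difference is in packaging: you upgrade in-measure convergence of $F_t(r,\cdot)$ to $L^2$-convergence and then invoke Cauchy--Schwarz and dominated convergence in $r$, whereas the paper performs the estimate directly by splitting each inner product over the bad set $E_t^s := \{\,|F_t(s,\cdot)| \ge \eta\,\}$ and its complement (controlling the first piece by $\|f\|_\infty\|g\|_2\sqrt{\mu(E_t^s)}$ and the second by $\eta\|g\|_2$). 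Your streamlining is fine for Lemma~\ref{lemma:correlationseasy}, though note it would not work for the more general Lemma~\ref{lemma:correlations}, whose subtler quantifier order ($\forall\delta\,\exists\sigma\,\forall\eta\,\exists T$) does not give genuine $L^2$-convergence of $F_t(r,\cdot)$ for a fixed $\sigma$; this is why the paper proves the harder version first and derives the easier one from it.
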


Instead than assuming converge to zero in measure the functions $\int_0^s  f\circ \phi^V_t\circ \phi^W_r (x) \diff r$  (i.e.~that the pushed arcs \emph{equidistribute} in measure) for all $ s\in [0, \sigma]$, to prove mixing via shearing it is actually sufficient to verify the assumptions in the following  Lemma~\ref{lemma:correlations}, which has a more delicate order of the quantifiers. This more general formulation will be used in some parts of the proof (see in particular the arguments at the end of \S\ref{sec:noncoboundary_case}).
%In some parts of a proof, The more delicate order of the quantifiers in Lemma~\ref{lemma:correlations} will be crucial in one of the applications of the Lemma (in \S\ref{sec:noncob"oundary_case}).

%     Lemma:correlations     % 
\begin{lemma}[Mixing via shearing 2]
\label{lemma:correlations}
Let $W\in \mathfrak g$ be any vector field and let $f\in L^2(M,\diff\mu)$ be bounded. The conclusion of Lemma~\ref{lemma:correlationseasy} also holds if  we  assume that for every $\delta >0$ there exists $0< \sigma <1$ such that for every $\eta>0$ there exists $T >0$ such that for every $t\geq T$ we have that for any $ s\in [0, \sigma]$,
$$
\mu \left( x \in M : \left\lvert \int_0^s  f\circ \phi^V_t\circ \phi^W_r (x) \diff r \right\rvert \geq \eta \right) \leq \delta.
$$
\end{lemma}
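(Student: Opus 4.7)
The plan is to reduce the correlation $\langle f \circ \phi^V_t, g\rangle$ to an inner product against a ``sheared average'' of $f \circ \phi^V_t$ along the $\phi^W$-direction, and then invoke the hypothesis to show this sheared average tends to zero in $L^2(M,d\mu)$. The argument mirrors that of Lemma~\ref{lemma:correlationseasy}; what is new is the order in which the small parameters must be fixed.

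First, I would exploit the $\phi^W$-smoothness of $g$ ($Wg \in L^2$) together with the fact that $\phi^W$ preserves $\mu$ (since $W \in \mathfrak g$) to approximate $g$ by its mean along a short $\phi^W$-arc. Writing $g\circ\phi^W_r - g = \int_0^r (Wg)\circ\phi^W_u\,du$ and applying Minkowski's inequality yields
$$\Big\| g - \frac{1}{s}\int_0^s g\circ \phi^W_r\, dr \Big\|_{L^2(\mu)} \leq \frac{s}{2}\,\|Wg\|_{L^2(\mu)},$$
so, as $f$ is bounded, Cauchy--Schwarz controls the cost of this replacement in the correlation by $\tfrac{s}{2}\|f\|_\infty\|Wg\|_2$.

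For the main term, I would swap the integrations in $r$ and $x$ and apply the measure-preserving change of variables $y=\phi^W_r(x)$, obtaining $\int_M g(y)\, J_t(y)\, d\mu(y)$ with $J_t(y) := \frac{1}{s}\int_0^s f(\phi^V_t \circ \phi^W_{-r}(y))\,dr$. To invoke the hypothesis, which controls \emph{forward} $W$-arcs, I substitute $r \mapsto s-r$ and translate the base point by $\phi^W_{-s}$: this rewrites the integrand as $f(\phi^V_t \circ \phi^W_r(z))$ with $z = \phi^W_{-s}(y)$, and since $\phi^W$ preserves $\mu$ the measures of sub-level sets are unchanged. For any prescribed $\eta'>0$, the hypothesis thus furnishes $T$ such that $\mu\{|J_t|\geq \eta'\} \leq \delta$ for all $t\geq T$; combined with the uniform bound $|J_t| \leq \|f\|_\infty$, this yields $\|J_t\|_2^2 \leq (\eta')^2 + \|f\|_\infty^2\delta$, and Cauchy--Schwarz bounds the main term by $\|g\|_2 \sqrt{(\eta')^2 + \|f\|_\infty^2\delta}$.

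The main point, where Lemma~\ref{lemma:correlations} genuinely differs from Lemma~\ref{lemma:correlationseasy}, lies in assembling these estimates in the correct order. Given $\varepsilon>0$, I would first fix $\delta$ so small that $\|g\|_2\|f\|_\infty\sqrt{\delta}<\varepsilon/3$; the hypothesis then produces $\sigma>0$; next I pick $s\in(0,\sigma]$ small enough that $\tfrac{s}{2}\|f\|_\infty \|Wg\|_2 < \varepsilon/3$; finally I choose $\eta'$ with $\|g\|_2\eta' < \varepsilon/3$ and apply the hypothesis with $\eta = s\eta'$ to obtain $T$. Summing the two contributions gives $|\langle f\circ \phi^V_t, g\rangle| < \varepsilon$ for every $t\geq T$. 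Unlike in Lemma~\ref{lemma:correlationseasy}, the scale $s$ must be fixed \emph{after} $\sigma$ (hence after $\delta$), and it is precisely the uniformity in $s\in[0,\sigma]$ built into the hypothesis that permits a single $T$ to handle all admissible $s$ simultaneously.
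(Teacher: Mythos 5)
Your proof is correct. After tracking the change of variables, your decomposition is in fact algebraically the same as the paper's: your ``main term'' $\int_M g\, J_t\, d\mu$ (where $J_t(y)=\tfrac{1}{s}\int_0^s f\circ\phi^V_t\circ\phi^W_{-r}(y)\,dr$) is exactly the boundary term $\tfrac{1}{s}\bigl\langle \int_0^s f\circ\phi^V_t\circ\phi^W_r\,dr,\; g\circ\phi^W_s\bigr\rangle$ produced by integration by parts, so your ``error term'' $\langle f\circ\phi^V_t,\, g-\tfrac{1}{s}\int_0^s g\circ\phi^W_r\,dr\rangle$ is necessarily equal to the paper's interior term. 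The genuine difference is in how that term is handled: the paper estimates the interior term by again splitting on the sub-level sets $E_t^s$ of $\int_0^s f\circ\phi^V_t\circ\phi^W_r$, thereby invoking the shearing hypothesis a second time, whereas you bound the same quantity deterministically and time-independently by $\tfrac{s}{2}\|f\|_\infty\|Wg\|_2$ via Minkowski and the $\phi^W$-invariance of $\mu$. Your variant is a mild but real simplification: the hypothesis is used only for the main term, the role of $Wg\in L^2$ is made transparent, and the parameter bookkeeping is a little cleaner, at the cost of departing from the template that unifies the proofs of Lemmas~\ref{lemma:correlationseasy} and \ref{lemma:correlations}. One small remark on your closing sentence: you in fact apply the hypothesis at a single fixed $s\in(0,\sigma]$, so you do not use the full uniformity over $s\in[0,\sigma]$; what you really need, and use, is that $\sigma$ is determined by $\delta$ alone so that $s$ may be chosen afterwards.
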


%\begin{remark}\label{rk:conv_measure} 
Notice that  the assumption of Lemma \ref{lemma:correlations} is satisfied in particular under the assumptions of Lemma~\ref{lemma:correlationseasy} (i.e. if the functions $\int_0^s  f\circ \phi^V_t\circ \phi^W_r (x) \diff r$ converge to zero in measure), thus we will only prove this more general  result, which also implies Lemma~\ref{lemma:correlationseasy} . 

%that there exists a $\sigma >0$ such that for all $ s\in [0, \sigma]$, the functions $\int_0^s  f\circ \phi^V_t\circ \phi^W_r (x) \diff r$ converge to zero in measure (i.e.~the pushed arcs \emph{equidistribute} in measure). The more delicate order of the quantifiers in Lemma~\ref{lemma:correlations} will be crucial in one of the applications of the Lemma (in \S\ref{sec:noncoboundary_case}).
%\end{remark}

%\begin{remark}
%Notice that whenever  the assumptions of Lemma~\ref{lemma:correlations} or of Lemma~\ref{lemma:correlationseasy} hold for every $f,g$ in a dense set of functions in $L^2(M, \diff \mu)$, we can conclude that $\phi^V$ is mixing with respect to its invariant measure $\alpha \mu $ (since we can consider an observable of the form $\alpha g$).
%\end{remark}

\begin{proof}[Proof of Lemma~\ref{lemma:correlations} (and hence of Lemma~\ref{lemma:correlationseasy})]
As the Haar volume form is $W$-invariant, for any $\sigma >0$ we have
$$
\langle f\circ \phi^V_t , g  \rangle_{L^2(M,\diff \mu)} = \frac{1}{\sigma}  \int_0^{\sigma}  \langle f\circ \phi^V_t \circ \phi^W_s ,\, g \circ \phi^W_s  \rangle_{L^2(M,\diff \mu)} \,\diff s. 
$$
Integrating by parts we derive the formula (as in \cite{FU}):
\begin{equation}
\label{eq:int_by_parts}
\begin{split}
\frac{1}{\sigma}  \int_0^{\sigma}  \langle f\circ \phi^V_t \circ \phi^W_s ,\,  g \circ \phi^W_s  \rangle \,\diff s  = &
\frac{1}{\sigma}  \langle \int_0^{\sigma}   f\circ \phi^V_t \circ \phi^W_s  \diff s , \, g \circ \phi^W_{\sigma}  \rangle  \\ &- 
\frac{1}{\sigma} \int_0^{\sigma}   \langle  \int_0^s  f\circ \phi^V_t\circ \phi^W_r  \diff r ,\, Wg \circ \phi^W_s  \rangle \diff s \,.
\end{split}
\end{equation}

Fix $\varepsilon >0$ and let $\delta >0$ be such that 
\begin{equation}
\label{eq:delta}
\sqrt{\delta} < \min \left\{ \frac{\varepsilon}{4 \norm{f}_{\infty} \norm{g}_2}, \frac{\varepsilon}{4 \norm{f}_{\infty} \norm{Wg}_2} \right\}.
\end{equation}
Consider the associated $0 < \sigma <1$ given by the assumption and fix $\eta >0$ such that 
\begin{equation}
\label{eq:eta}
\eta < \min \left\{ \frac{\sigma \varepsilon}{4 \norm{g}_2}, \frac{\varepsilon}{4 \norm{Wg}_2} \right\}. 
\end{equation}
Cosider any $t \geq T$, where $T>0$ is given by the assumption.
For any $s \in [0,\sigma]$, let us define
$$
E_t^s : = \left\{ x \in M : \left\lvert \int_0^s  f\circ \phi^V_t\circ \phi^W_r (x) \diff r \right\rvert \geq \eta \right\}.
$$
Using the triangle and the Cauchy-Schwarz inequalities, we can bound the first term on the right-hand side of \eqref{eq:int_by_parts} by
\begin{equation*}
\begin{split}
&\left\lvert \frac{1}{\sigma}  \langle \int_0^{\sigma}   f\circ \phi^V_t \circ \phi^W_s  \diff s , g \circ \phi^W_{\sigma}  \rangle \right\rvert  \leq \frac{1}{\sigma} \left\lvert  \langle \one_{E_t^{\sigma}} \left(\int_0^{\sigma}   f\circ \phi^V_t \circ \phi^W_s  \diff s \right) , g \circ \phi^W_{\sigma}  \rangle \right\rvert \\
& \quad + \frac{1}{\sigma} \left\lvert  \langle \one_{M \setminus E_t^{\sigma}} \left(\int_0^{\sigma}   f\circ \phi^V_t \circ \phi^W_s  \diff s \right) , g \circ \phi^W_{\sigma}  \rangle \right\rvert \leq \norm{f}_{\infty} \norm{g}_2 \sqrt{\mu (E_t^{\sigma}) } + \frac{\eta}{\sigma} \norm{g}_2 \\
& \leq \norm{f}_{\infty} \norm{g}_2 \sqrt{\delta} + \frac{\eta}{\sigma} \norm{g}_2.
\end{split}
\end{equation*}
By choice of the parameters $\delta$ and $\eta $ (see \eqref{eq:delta} and \eqref{eq:eta}), we get
$$
\left\lvert \frac{1}{\sigma}  \langle \int_0^{\sigma}   f\circ \phi^V_t \circ \phi^W_s  \diff s , g \circ \phi^W_{\sigma}  \rangle \right\rvert < \frac{\varepsilon}{2}.
$$
Similarly, for the second term on the right-hand side of \eqref{eq:int_by_parts},
\begin{equation*}
\begin{split}
\left\lvert \langle  \int_0^s  f\circ \phi^V_t\circ \phi^W_r  \diff r , Wg \circ \phi^W_s  \rangle  \right\rvert  \leq & \left\lvert  \langle \one_{E_t^s} \left(\int_0^{s}   f\circ \phi^V_t \circ \phi^W_r  \diff r \right) , Wg \circ \phi^W_{\sigma}  \rangle \right\rvert \\
& + \left\lvert  \langle \one_{M \setminus E_t^s} \left(\int_0^{s}   f\circ \phi^V_t \circ \phi^W_r  \diff r \right) , Wg \circ \phi^W_{\sigma}  \rangle \right\rvert \\
\leq & \norm{Wg}_2 \sigma \norm{f}_{\infty} \sqrt{ \mu (E_t^s)} + \eta \norm{Wg}_2 < \frac{\varepsilon}{2}, 
\end{split}
\end{equation*}
therefore, again by the choice of parameters  \eqref{eq:delta} and \eqref{eq:eta}, we conclude that for any $t \geq T$
$$
\langle f\circ \phi^V_t , g  \rangle_{L^2(M,\diff \mu)}  < \varepsilon.
$$
This proves that $\langle f\circ \phi^V_t , g  \rangle_{L^2(M,\diff \mu)} $ tends to zero. 
\end{proof}

\begin{remark}\label{rk:mixingconclusion} Notice that whenever  the assumptions of Lemma~\ref{lemma:correlations} or of Lemma~\ref{lemma:correlationseasy} hold for every $f,g$ in a dense set of functions of zero $\phi^V$-invariant mean in $L^2(M, \diff \mu)$, we can conclude that $\phi^V$ is mixing (with respect to the invariant measure $\alpha \mu$, since we can consider an observable of the form $\alpha g$).
\end{remark}

\subsection{Pushforward of curves} 
\label{sec:pushing_curves}
{In order to apply the \emph{mixing via shearing} arguments in the previous \S\ref{sec:shearing}, we need to study pushforwards of curves given by some flow $\phi^W$. In this section we hence compute the infinitesimal pushforward of a vector $W$ in the Lie algebra. }

%Let us consider an arc  $\{ \phi^W_r (x), \, 0\leq r\leq s\}$ of the flow $\phi^W_r $ generated by some $W \in \mathfrak{g}$ and let us \emph{push} it via the flow  $ \phi^V_t$. W
\smallskip

Let $(X,Y,Z)\in \mathfrak g^3$ denote any Heisenberg triple. 
Recall we denote by $V= \frac{1}{\alpha} X$ the time-change of $X$. 
%Let ${\alpha}>0$ denote a smooth time-change function (of the flow $\phi^X$ generated by $X$) and $V= \frac{1}{\alpha} X$. 
We have the commutations
$$
\begin{aligned}
&[V,Y] = \left[\frac{1}{\alpha} X,Y\right] = -\left( Y\frac{1}{\alpha} \right) X +  \frac{1}{\alpha} Z =   \frac{Y\alpha}{\alpha} V  + \frac{1}{\alpha} Z\,,  \\  
&[V,Z] = \left[\frac{1}{\alpha} X,Z\right] =  \frac{Z\alpha}{\alpha} V  \,.
\end{aligned}
$$
In particular, $\{V, Y, Z\}$ generates over $\mathscr{C}^\infty(M)$ a Lie subalgebra of the Lie algebra of smooth vector fields on $M$.
We will compute below the action of the tangent dynamics of the flow 
$\phi^V$ on vectors tangent to the foliation tangent to $\{V, Y, Z\}$.

Let $W$ be any vector in the Lie subalgebra generated by $\{V, Y, Z\}$.  We write 
$$
(\phi^V_t)_* (W) = a_t V + b_t Y + c_t Z \,.
$$
By differentiation we derive
$$
\begin{aligned}
 \frac{\diff a_t}{\diff t} V + \frac{\diff b_t}{\diff t} Y + \frac{dc_t}{dt} Z &= -Va_t V  - Vb_t Y -  b_t [V,Y]  - Vc_t Z - c_t [V,Z]\\ 
 &=  - \left(Va_t   +b_t \frac{Y\alpha}{\alpha} + c_t\frac{Z\alpha}{\alpha} \right) V  - Vb_t Y  - \left(b_t \frac{1}{\alpha} + Vc_t \right) Z \,.
\end{aligned}
$$
or, in other terms,
$$
\begin{aligned}
 \frac{\diff a_t}{\diff t} &=  -Va_t   - b_t \frac{Y\alpha}{\alpha} - c_t\frac{Z\alpha}{\alpha}  \,,\\
  \frac{\diff b_t}{\diff t} &=  - Vb_t  \,, \\
\frac{\diff c_t}{\diff t} &=  - Vc_t - b_t \frac{1}{\alpha} \,.
\end{aligned}
$$
It follows that
$$
\begin{aligned}
&\frac{\diff }{\diff t} (a_t \circ \phi^V_t) =  - (b_t \circ \phi^V_t) \frac{Y\alpha}{\alpha}\circ \phi^V_t - (c_t\circ \phi^V_t) \frac{Z\alpha}{\alpha}\circ \phi^V_t  \,,\\
&\frac{\diff }{\diff t}  (b_t \circ  \phi^V_t) =  0 \,, \\
&\frac{\diff }{\diff t} (c_t\circ \phi^V_t) =  -(b_t \circ \phi^V_t) \left(\frac{1}{\alpha}\circ \phi^V_t \right)  \,.
\end{aligned}
$$ 
By solving the system of ODEs above, we get the following expressions for the pushforwards of the vector fields $Y$ and $Z$ by the flow $\phi^V$.

\begin{lemma}[Infinitesimal pushforwards]
\label{corollary:push_forward}
For all $x \in M$ and for all $t \in \R$, the pushforwards of the vector fields $Z$ and $Y$ via $\phi^V_t$ at the point $\phi^V_t(x)$ are
\begin{equation*}
\begin{split}
&[(\phi^V_t)_{\ast}(Z) ](\phi^V_t(x)) = Z(\phi^V_t(x))  - \left( \int_0^t \frac{Z\alpha}{\alpha} \circ \phi^V_{\tau}(x) \diff \tau \right) V(\phi^V_t(x)), \\
&[(\phi^V_t)_{\ast}(Y) ](\phi^V_t(x)) = Y(\phi^V_t(x)) - \left( \int_0^t \frac{1}{\alpha} \circ \phi^V_{\tau}(x) \diff \tau \right) Z(\phi^V_t(x)) \\
& \quad - \left( \int_0^t \frac{Y\alpha}{\alpha} \circ \phi^V_{\tau}(x) \diff \tau - \int_0^t \left(\frac{Z\alpha}{\alpha} \circ \phi^V_{\tau}(x) \right)\left( \int_0^{\tau} \frac{1}{\alpha} \circ \phi^V_{r}(x) \diff r \right)  \diff \tau \right) V(\phi^V_t(x)).
\end{split}
\end{equation*} 
\end{lemma}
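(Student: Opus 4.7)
The statement is essentially a matter of solving the system of first order ODEs
$$
\frac{d}{dt}(a_t\circ \phi^V_t)=-(b_t\circ\phi^V_t)\frac{Y\alpha}{\alpha}\circ\phi^V_t-(c_t\circ\phi^V_t)\frac{Z\alpha}{\alpha}\circ\phi^V_t, \quad \frac{d}{dt}(b_t\circ\phi^V_t)=0,
$$
$$
\frac{d}{dt}(c_t\circ\phi^V_t)=-(b_t\circ\phi^V_t)\frac{1}{\alpha}\circ\phi^V_t,
$$
derived in the paragraph preceding the statement, with initial conditions read off from $W=Z$ (respectively $W=Y$), and then evaluating the resulting coefficient functions at the point $\phi^V_t(x)$. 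Since the ODE system has already been set up, the only remaining work is to integrate it carefully in each of the two cases.

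The plan is as follows. First, for $W=Z$, the initial conditions are $a_0\equiv 0$, $b_0\equiv 0$, $c_0\equiv 1$. The equation for $b_t\circ\phi^V_t$ shows that this quantity is constantly $0$. Plugging into the equation for $c_t\circ\phi^V_t$ gives that it is constantly $1$. The equation for $a_t\circ\phi^V_t$ then reduces, by the fundamental theorem of calculus, to
$$
(a_t\circ\phi^V_t)(x)=-\int_0^t \frac{Z\alpha}{\alpha}\circ\phi^V_\tau(x)\,d\tau,
$$
and substituting into $(\phi^V_t)_\ast(Z)=a_tV+b_tY+c_tZ$, evaluated at $\phi^V_t(x)$, yields the first formula in the statement.

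Second, for $W=Y$, the initial conditions become $a_0\equiv 0$, $b_0\equiv 1$, $c_0\equiv 0$. Again $b_t\circ\phi^V_t\equiv 1$, so the equation for $c_t\circ\phi^V_t$ gives
$$
(c_t\circ\phi^V_t)(x)=-\int_0^t \frac{1}{\alpha}\circ\phi^V_\tau(x)\,d\tau.
$$
Plugging $b_t\circ\phi^V_t\equiv 1$ and this expression for $c_t\circ\phi^V_t$ into the equation for $a_t\circ\phi^V_t$ and integrating, we obtain
$$
(a_t\circ\phi^V_t)(x)=-\int_0^t \frac{Y\alpha}{\alpha}\circ\phi^V_\tau(x)\,d\tau+\int_0^t\left(\frac{Z\alpha}{\alpha}\circ\phi^V_\tau(x)\right)\left(\int_0^\tau \frac{1}{\alpha}\circ\phi^V_r(x)\,dr\right)d\tau,
$$
and substituting into $(\phi^V_t)_\ast(Y)=a_tV+b_tY+c_tZ$ at the point $\phi^V_t(x)$ yields the second identity.

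There is no real obstacle: once the ODE system is in hand (which has already been achieved in the preceding discussion using only the commutation relations $[V,Y]=\frac{Y\alpha}{\alpha}V+\frac{1}{\alpha}Z$ and $[V,Z]=\frac{Z\alpha}{\alpha}V$), the proof is a direct computation. The only point requiring a little care is the bookkeeping of where each coefficient is evaluated, namely distinguishing between the coefficient function $a_t$ on $M$ and its composition $a_t\circ\phi^V_t$ whose time derivative is actually what the ODE controls; this is what dictates evaluating the final expressions along the orbit $\tau\mapsto\phi^V_\tau(x)$ rather than at the basepoint $\phi^V_t(x)$.
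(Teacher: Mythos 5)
Your proposal is correct and follows exactly the route the paper implicitly takes: the paper derives the ODE system in the paragraph preceding the Lemma and then simply says "By solving the system of ODEs above, we get the following expressions," so the integration with the two sets of initial conditions ($a_0,b_0,c_0)=(0,0,1)$ for $W=Z$ and $(0,1,0)$ for $W=Y$) that you carry out is precisely the omitted step. Your remark about distinguishing $a_t$ from $a_t\circ\phi^V_t$ and reading off the coefficients at $\phi^V_t(x)$ is also the right bookkeeping and matches how the statement is phrased.
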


Notice that the \emph{shear}, for example of an arc in direction of $Z$ (see the first equation of Lemma~\ref{corollary:push_forward}) is described in terms of ergodic integrals of a function along the flow $\phi^V$ (for example the coefficient in the direction of $V$ for the pushforward of an arc in direction $Z$ is given by the ergodic integral of the function $Z\alpha/\alpha$, see first equation of Lemma~\ref{corollary:push_forward}). For this reason, to understand shear, we will now study the growth of ergodic integrals.

\subsection{Growth of ergodic integrals} \label{sec:growth}

Let $(X,Y,Z)\in \mathfrak g^3$ be a Heisenberg triple and let $[Z]_\Gamma \subset Z(\mathfrak g)$ be the smallest $\Gamma$-rational subspace containing $Z$.  
We recall from \S\ref{sec:trig_pol} that we have a Fourier decomposition
\begin{equation}
\label{eq:Fourier_decomposition}
L^2(M,\diff\mu) = \bigoplus_{{{\bf v}} \in \Lambda^\ast} H_{{\bf v}} ([Z]_\Gamma),
\end{equation}
where
\begin{equation}\label{eq:Hv}
H_{{\bf v}} ( [Z]_\Gamma) = \left\{ f \in L^2(M,\diff\mu) : f \circ \Phi^{ [Z]_\Gamma} _{{\bf t} }= 
\exp(2 \pi \imath  \langle {{\bf v}}, {\bf t}  \rangle)  f  \right\}.
\end{equation}

Let $\beta \in H_0 ([Z]_\Gamma)$ be a smooth, positive, $[Z]_\Gamma$-invariant function, and denote by $\phi^U$ the smooth time-change of the nilflow $\phi^X$ generated by $U = \frac{1}{\beta} X$. 
Then, $\phi^U$ is uniquely ergodic, with invariant measure $\diff \mu_\beta := \beta \diff \mu$.
Moreover,  $\phi^U$ and $\phi^Z$ commute.
In fact we have the following
\begin{lemma} 
\label{lemma:time_change_commute}
If $U = \frac{1}{\beta} X$ is the generator of a time-change with $\beta>0$ a $[Z]_{\Gamma}$-invariant function, then 
$$
\phi^U_s \circ \Phi^{[Z]_{\Gamma}}_{\bf t} =  \Phi^{[Z]_{\Gamma}}_{\bf t} \circ \phi^U_s  \,, \quad \text{ for all } s \in \R\, \text{ and } {\bf t} \in [Z]_{\Gamma}.
$$
\end{lemma}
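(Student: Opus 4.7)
The plan is to reduce the commutation of the flows to the vanishing of the Lie bracket of the generators. Since $[Z]_\Gamma \subset Z(\mathfrak{g})$ is central, every $Z' \in [Z]_\Gamma$ satisfies $[X,Z']=0$, so in particular the flow $\Phi^{[Z]_\Gamma}$ commutes with $\phi^X$. Applying the Leibniz rule for the bracket of vector fields gives
\[
[U,Z'] \,=\, \left[\tfrac{1}{\beta}X,\,Z'\right] \,=\, -\!\left(Z'\tfrac{1}{\beta}\right) X + \tfrac{1}{\beta}[X,Z'] \,=\, \tfrac{Z'\beta}{\beta^{2}}\,X.
\]
The hypothesis $\beta \in H_0([Z]_\Gamma)$ in the Fourier decomposition \eqref{eq:Fourier_decomposition} is, by the description of $H_{\mathbf{v}}([Z]_\Gamma)$ in \eqref{eq:Hv}, exactly the statement that $\beta$ is invariant under the whole action $\Phi^{[Z]_\Gamma}$, which in infinitesimal form means $Z'\beta\equiv 0$ for every $Z'\in[Z]_\Gamma$. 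Hence $[U,Z']=0$ on $M$. Since both $U$ and $Z'$ are complete vector fields on the compact manifold $M$, their flows commute, which is precisely the desired identity.

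If one prefers a global argument in the cocycle language used in \S\ref{sec:basic}, the plan is to use the cocycle $\tau(x,r):=\int_0^r \beta(\phi^X_u(x))\,\diff u$, which is the cocycle generated by $\beta$ and satisfies $\phi^U_{\tau(x,r)}(x)=\phi^X_r(x)$. Fix $x\in M$, $\mathbf{t}\in [Z]_\Gamma$ and $s\in\R$, set $y:=\Phi^{[Z]_\Gamma}_{\mathbf{t}}(x)$, and choose $r$ with $\tau(x,r)=s$. By centrality of $[Z]_\Gamma$ the flows $\phi^X$ and $\Phi^{[Z]_\Gamma}$ commute, so
\[
\Phi^{[Z]_\Gamma}_{\mathbf{t}}\bigl(\phi^U_s(x)\bigr)=\Phi^{[Z]_\Gamma}_{\mathbf{t}}\bigl(\phi^X_r(x)\bigr)=\phi^X_r(y).
\]
Using the $[Z]_\Gamma$-invariance of $\beta$ one computes
\[
\tau(y,r)=\int_0^r \beta\bigl(\phi^X_u(y)\bigr)\,\diff u=\int_0^r \beta\bigl(\Phi^{[Z]_\Gamma}_{\mathbf{t}}\phi^X_u(x)\bigr)\,\diff u=\int_0^r\beta\bigl(\phi^X_u(x)\bigr)\,\diff u=s,
\]
so $\phi^U_s(y)=\phi^X_r(y)$, and combining the two identities yields $\phi^U_s\circ\Phi^{[Z]_\Gamma}_{\mathbf{t}}=\Phi^{[Z]_\Gamma}_{\mathbf{t}}\circ\phi^U_s$.

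There is no substantive obstacle here: both routes rely only on centrality of $[Z]_\Gamma$ (giving $[X,Z']=0$ and the commutation of $\Phi^{[Z]_\Gamma}$ with $\phi^X$) and on the fact that membership in $H_0([Z]_\Gamma)$ is equivalent to pointwise invariance under $\Phi^{[Z]_\Gamma}$, equivalently to being annihilated by every $Z'\in[Z]_\Gamma$ acting as a derivation. The only mildly delicate point to make explicit is this equivalence, which is a direct unpacking of the definition of $H_0([Z]_\Gamma)$.
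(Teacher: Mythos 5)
Your first argument is essentially the paper's proof: both reduce the commutation of the flows to the vanishing of the Lie bracket $[U,Z']$ for $Z'\in [Z]_\Gamma$, expand $[\tfrac{1}{\beta}X,Z']$ by the Leibniz rule, and invoke centrality to kill $[X,Z']$ and the $[Z]_\Gamma$-invariance of $\beta$ to kill the term $(Z'\tfrac{1}{\beta})X$. The second, cocycle-language argument you append is a genuine alternative worth a remark: rather than passing through the (standard, but often stated without proof) fact that vanishing bracket of complete vector fields yields commuting flows, it verifies the flow identity directly by comparing the time-change cocycle $\tau(x,r)=\int_0^r\beta\circ\phi^X_u(x)\,\diff u$ at $x$ and at $y=\Phi^{[Z]_\Gamma}_{\mathbf t}(x)$, using only the commutation of $\phi^X$ with $\Phi^{[Z]_\Gamma}$ and the invariance of $\beta$. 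This buys you a self-contained elementary proof at the cost of a few more lines; the paper's route is shorter because it delegates the bracket-to-flow step to general theory. Both are correct.
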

\begin{proof} 
Let $W \in [Z]_{\Gamma}$.
Since two smooth flows commute if and only if their generators commute, it is
enough to verify that the generators $U= \frac{1}{\beta} X$ and $W$ commute. Since $W \in Z(\mathfrak g)$ and
 the function $\beta$ is $W$-invariant, we have
$$
[U, W] = \left[\frac{1}{\beta} X, W\right] = \frac{1}{\beta} [X,W] - \left( W\frac{1}{\beta}\right) X =  0 \,.
$$
The argument is therefore complete. 
\end{proof}

Let $F_T$ denote ergodic integral  
$$
F_T (x): = \int_0^T  f \circ \phi^{U}_\tau (x) \diff \tau.
$$
We remark that for every $t,T \in \R$, the following standard cocycle relation holds
\begin{equation}
\label{eq:cocycle_relation}
F_t(x) + F_T(\phi^{U}_t(x)) = F_{t+T}(x).
\end{equation}

We will be interested, to prove mixing in \S\ref{sec:proof}, to establish the growth in measure, as $T$ grows, of the integral function $F_T$, where  $f$ (in the integral defining $F_T$) is the function $Z\alpha^\perp$. Indeed, by the calculations presented in \S\ref{sec:pushing_curves}, after the change of variable $\tau = \tau(x,t)$ (see Remark~\ref{rk:equiv_tautilde}), these types of integrals quantify the \emph{shearing} of small curves which we  crucially exploit to prove mixing (see Section~\ref{sec:noncoboundary_case} for details).

This section is devoted to the proof of the following result.

\begin{theorem}
\label{thm:stretch_ergodic_integral}
Let $f  \in H_0([Z]_{\Gamma})^\perp
\in L^2(M,\diff\mu)$ be a trigonometric polynomial w.r.t.~$[Z]_{\Gamma}$. %, with $f_{{\bf v}} \in  H_{{\bf v}} ( [Z]_\Gamma)$. 
Assume that $f$ is not a measurable coboundary for $\phi_{\R}^{U}$. Then, for every $C>1$, we have
$$
\lim_{T \to \infty} \mu \{ |F_T| < C\} = 0.
$$
\end{theorem}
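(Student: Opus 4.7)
The plan is to deduce the statement from the classical Schmidt--Aaronson dichotomy for real-valued additive cocycles over an ergodic flow on a standard probability space. Two preliminary facts set the stage. First, since $\beta$ is smooth, strictly positive, and $[Z]_\Gamma$-invariant, the measure $\mu_\beta := \beta\,\mu$ (after normalization) is a probability measure equivalent to $\mu$ with bounded Radon--Nikodym derivatives, and the time-change $\phi^U$ is uniquely ergodic with invariant measure $\mu_\beta$, inherited from the unique ergodicity of $\phi^X$ via the standard change of variable for smooth time-changes with positive speed. Second, because $\beta \in H_0([Z]_\Gamma)$ and $f \in H_0([Z]_\Gamma)^\perp$, the product $\beta f$ again lies in $H_0([Z]_\Gamma)^\perp$, and orthogonality of the torus characters yields $\int f\,\diff\mu_\beta = \int \beta f\,\diff\mu = 0$.

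With this setup in hand, the core step is to invoke the dichotomy for the zero-mean cocycle $F_t$ over the ergodic $\R$-action $(\phi^U, M, \mu_\beta)$: exactly one of the following holds --- either there exists a measurable $g:M\to\R$ with $F_t(x) = g(\phi^U_t x) - g(x)$ for every $t \in \R$ and $\mu_\beta$-a.e.~$x$, or the distribution of $F_T$ escapes to infinity in measure, that is, $\mu_\beta\{x\in M : |F_T(x)|<C\}\to 0$ as $T\to\infty$ for every $C>0$. By hypothesis $f$ is not a measurable coboundary for $\phi^U_\R$, so the first alternative fails, the second must hold, and the equivalence of $\mu$ and $\mu_\beta$ transfers the convergence to $\mu\{|F_T|<C\}\to 0$, concluding the proof.

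The main obstacle is to secure a clean $\R$-action formulation of Schmidt's dichotomy, since the standard references handle the $\Z$-action case. A self-contained route is to argue on the skew product $\tilde\phi^U_t(x,r) := (\phi^U_t x,\, r+F_t(x))$ on $M\times \R$ with $\sigma$-finite invariant measure $\mu_\beta \otimes \diff r$: failure of $|F_T|$ to escape in measure allows one to extract, via weak compactness of translates of $F_T$ on Rokhlin towers, a measurable limit $g$ realising the coboundary relation, contradicting the hypothesis. An alternative route, more intrinsic to our setting, exploits that $f$ is a trigonometric polynomial with respect to $[Z]_\Gamma$: since $\phi^U$ commutes with the torus action by Lemma~\ref{lemma:time_change_commute}, each Fourier mode $F_T^{(\mathbf{v})}$ of $F_T$ stays inside $H_{\mathbf{v}}([Z]_\Gamma)$, and applying the classical $\Z$-action dichotomy of Schmidt to some Fourier mode $f_{\mathbf{v}_0}$ which itself is not a coboundary --- such a mode must exist, or else $f$ would be a coboundary --- together with the sublevel estimate of Lemma~\ref{thm:sublevel_sets_polynomials}, yields the required escape of $|F_T|$.
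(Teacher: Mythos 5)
Your setup observations are sound: the equivalence of $\mu$ and $\mu_\beta$ with bounded Radon--Nikodym derivatives, the unique ergodicity of $\phi^U$, and the vanishing $\int f\,\diff\mu_\beta = 0$ (since $\beta f\in H_0([Z]_\Gamma)^\perp$) are all correct and are indeed used implicitly in the paper. However, the core of the argument rests on a dichotomy that is not true in the generality you invoke.

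The Schmidt/Gottschalk--Hedlund dichotomy states that $f$ is a measurable coboundary for the ergodic flow $\phi^U$ if and only if the family of distributions $\{F_T\}_{T\ge 0}$ is \emph{tight}. Failure of tightness means: there is an $\varepsilon>0$ such that for every $C$, $\mu_\beta\{|F_T|>C\}\ge\varepsilon$ for \emph{some} $T$. This is a statement about escape along a subsequence and is strictly weaker than $\lim_{T\to\infty}\mu_\beta\{|F_T|<C\}=0$ for every $C$. What one can deduce cleanly from non-coboundariness, by the Gottschalk--Hedlund mechanism you sketch on the skew product, is precisely the paper's Lemma~\ref{lemma:Gottschalk-Hedlund}: the \emph{Ces\`aro averages} $\frac1T\int_0^T\mu_\beta\{|F_t|<C\}\,\diff t\to 0$. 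This gives convergence along a set of $t$ of full density, hence $\liminf=0$, but in general there may be a subsequence $T_n\to\infty$ along which $\mu_\beta\{|F_{T_n}|<C\}$ stays bounded away from zero. There is no general theorem stating that non-coboundary implies escape in measure along the whole sequence, and your first route does not actually prove the claim.

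Closing this gap is exactly where the paper's real work lies, and it is not a soft step. The mechanism is the decoupling Proposition~\ref{proposition:decoupling}: if $\mu_\beta\{|F_t|<C_0\}$ is small for some time $t$, then for all large $T$ the difference $F_T\circ\phi^U_t-F_T$ is also rarely small. This is combined with growth along an arithmetic progression (Corollary~\ref{cor:arithmetic}) and inclusion--exclusion to turn $\limsup\mu_\beta\{|F_T|<C\}>0$ into a contradiction. The decoupling proof is where the specific hypotheses enter essentially: the fact that $f$ lies in $H_0([Z]_\Gamma)^\perp$ with finitely many Fourier modes (Lemma~\ref{thm:sublevel_sets_polynomials}), the commutation of $\phi^U$ with the torus action (Lemma~\ref{lemma:time_change_commute}), and the shearing/wrapping estimate obtained by following short $\phi^Y$-arcs pushed by $\phi^U$ and tracking the central coordinate $R(s)$. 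Your second route gestures at some of these ingredients (Fourier decomposition, sublevel-set estimate, commutation) but still appeals to ``the classical $\Z$-action dichotomy of Schmidt'' as the engine giving escape in measure, so it inherits the same gap. In short: the proposal is missing the decoupling argument, which is the heart of the proof, and the dichotomy it tries to substitute for it does not hold.
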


\begin{remark}\label{rk:equivalent}
Since $\mu$ and $\mu_\beta$ (invariant measure for $\phi^U$) are absolutely continuous with respect to each other, the conclusion of Theorem~\ref{thm:stretch_ergodic_integral} is equivalent to showing that   $\lim_{t \to \infty} \mu_\beta \{ |F_t| < C\} =0$.
\end{remark}

%     Strategy     %
The proof of this Theorem will take the remainder of the section. 

\subsection{Outline of the proof of Theorem \ref{thm:stretch_ergodic_integral}}\label{sec:growth_outline}
To prove Theorem~\ref{thm:stretch_ergodic_integral}, which  follows the same scheme than in \cite{AFU}, we proceed in the following way. By a standard  Gottschalk-Hedlund argument (see Lemma \ref{lemma:Gottschalk-Hedlund} below) we  first prove that the \emph{Cesaro averages}  (in $T$) 
 of the measure of the sets $\{ |F_T| < C\} $ tend to zero. From this, we want to derive that the  measure of the set $\{ |F_T| < C\}$ tend to zero at $T$ grows. To do this, we use an argument that we call \emph{decoupling}: we show indeed that $F_T$ and $F_T \circ \phi_t^U$ become sufficiently independent for large values of $t$,  so that one can guarantee that the functions $ F_T \circ \phi_t^U$ and $ F_T $  are unlikely (in measure) to be \emph{simultaneously}  large  when $t$ is large  (see 
 Proposition~\ref{proposition:decoupling} for the precise statement). The proof of Proposition~\ref{proposition:decoupling} is given separately in~\S\ref{sec:decoupling}; the proposition is  a higher dimensional version of  \cite[Lemma 5]{AFU} (see also \cite[Lemma 5.2]{Rav2}), with the important difference that we are working with a time-change of the homogeneous flow instead of its return map; 
the geometric mechanism behind the proof (which is essentially a \emph{wrapping in the fiber} through shearing estimates) still applies in the more general context considered (under the crucial assumption that $\phi^U$ and $\phi^Z$ commute, see Lemma~\ref{lemma:time_change_commute}). At the end of  this section we show that the decoupling result given by Proposition~\ref{proposition:decoupling} can be combined with the result on the average growth of ergodic integrals of Lemma~\ref{lemma:Gottschalk-Hedlund} to deduce the growth in Theorem~\ref{thm:stretch_ergodic_integral}: we use more precisely that one can find an arithmetic progressions $\{ i t_0 \}_{i=1}^{\ell}$  such that $\mu_{\beta} \{ |F_{i t_0}| < C \} < \varepsilon$ (see Corollary~\ref{cor:arithmetic}) and then, thanks to the decoupling argument, apply a version of the inclusion-exclusion principle to conclude.
 
\medskip

%     Gottschalk-Hedlund     %
The following is a standard Gottschalk-Hedlund type of result.
For completeness, we include  a proof for convenience of the reader in  \S\ref{sec:Appendix_B} (Appendix A).
\begin{lemma}[Growth in Cesaro averages]
\label{lemma:Gottschalk-Hedlund}
Assume $f$ is not a measurable coboundary for $\phi^{U}_{\R}$. 
Then, for all $C >1$, 
$$
\lim_{T \to \infty} \frac{1}{T} \int_0^T \mu_{\beta} \{ |F_t| < C\} \diff t = 0.
$$
\end{lemma}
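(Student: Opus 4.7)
The plan is to argue by contradiction. Assume instead that for some $C > 1$,
$$
\limsup_{T \to \infty} \frac{1}{T}\int_0^T \mu_\beta\{|F_t| < C\} \, dt = \delta > 0,
$$
and deduce from this that $f$ is a measurable coboundary for $\phi^U_\R$, contradicting the standing hypothesis. The strategy is to leverage the "many" pairs $(x, t)$ with $|F_t(x)| < C$ to build a non-trivial invariant measure for the skew product generated by $F_t$, and then to extract from it a measurable transfer function $u$ satisfying $u \circ \phi^U_t - u = F_t$.

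First, I would form the skew product flow $\mathcal{T}_t(x, s) := (\phi^U_t x, \, s + F_t(x))$ on $M \times \R$, well defined thanks to the cocycle relation \eqref{eq:cocycle_relation}, and consider the empirical probability measures
$$
\nu_T := \frac{1}{T}\int_0^T (\mathcal{T}_t)_{\ast}(\mu_\beta \otimes \delta_0) \, dt
$$
on $M \times \R$. Each $\nu_T$ has $M$-marginal equal to $\mu_\beta$ (by $\phi^U$-invariance of $\mu_\beta$), and the contradiction hypothesis translates into $\nu_{T_n}(M \times [-C, C]) \geq \delta/2$ along some sequence $T_n \to \infty$. Since each slab $M \times [-R, R]$ is compact, a diagonal weak-$\ast$ extraction then produces a non-trivial, locally finite Radon measure $\nu$ on $M \times \R$ with $\nu(M \times [-C, C]) \geq \delta/2$, which is $\mathcal{T}_t$-invariant for every $t \in \R$ thanks to the standard Krylov--Bogolyubov estimate $|\int \psi \circ \mathcal{T}_s \, d\nu_T - \int \psi \, d\nu_T| = O_{\psi, s}(1/T)$ valid for $\psi \in C_c(M \times \R)$. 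The $M$-marginal of $\nu$ is $\phi^U$-invariant and dominated by $\mu_\beta$, hence equals $c \, \mu_\beta$ for some $c \in (0, 1]$ by unique ergodicity of $\phi^U$; renormalize so that $c = 1$.

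Next, I would disintegrate $\nu = \int_M (\delta_x \otimes \rho_x) \, d\mu_\beta(x)$ as a measurable family of locally finite Radon measures on $\R$, non-zero for $\mu_\beta$-almost every $x$ by ergodicity. Combining the $\mathcal{T}$-invariance of $\nu$ with the $\phi^U$-invariance of $\mu_\beta$ yields the translation equivariance
$$
(\tau_{F_t(x)})_{\ast} \rho_x = \rho_{\phi^U_t x}, \qquad \text{for $\mu_\beta$-a.e.\ $x$ and every $t \in \R$,}
$$
where $\tau_r$ is translation by $r$. A measurable quantile selector $u \colon M \to \R$ (say, the median of the restriction of $\rho_x$ to the smallest symmetric window of integer length carrying unit mass) is then a.e.\ finite by local finiteness of $\rho_x$, and the equivariance above forces $u \circ \phi^U_t - u = F_t$ almost surely for every $t$. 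This exhibits $f = U u$ as a measurable coboundary, giving the desired contradiction. The main obstacle will be this last step: producing a measurable selector $u$ which is a.e.\ finite, and promoting the almost-sure-in-$x$ identity (one per fixed $t$) to a simultaneous coboundary equation in $(x, t)$; the latter is handled via the continuity of $t \mapsto F_t(x)$ together with Fubini.
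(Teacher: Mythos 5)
Your approach is viable and genuinely different from the paper's, but one step as written is wrong.

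The paper argues pointwise: for each $x$ it forms the curve measure $\nu_{T,x}$ (the image of normalized Lebesgue on $[0,T]$ under $t\mapsto(\phi^U_t x,F_t(x))$), reduces the Cesaro statement to $\nu_{T,x}\rightharpoonup 0$ by dominated convergence, and in the contradiction argument passes to an \emph{ergodic component} $\widehat\nu$ of a non-zero weak limit and shows that each fiber $\{x\}\times\R$ carries a \emph{unique} $\widehat\nu$-generic point, the uniqueness coming from the finiteness of $\widehat\nu$ under vertical translations. You instead average over $\mu_\beta$ from the start, take a weak-$\ast$ limit $\nu$ of $\nu_T=\tfrac1T\int_0^T(\mathcal T_t)_\ast(\mu_\beta\otimes\delta_0)\diff t$, identify the $M$-marginal with $c\,\mu_\beta$ by unique ergodicity, disintegrate $\nu=\int\delta_x\otimes\rho_x\,\diff\mu_\beta$, derive the equivariance $(\tau_{F_t(x)})_\ast\rho_x=\rho_{\phi^U_t x}$, and extract $u$ from a translation-equivariant functional $\Phi$ of $\rho_x$. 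This is a legitimate alternative: it trades the ergodic-component/unique-generic-point discussion for a disintegration-plus-selector argument, and the skeleton (Krylov--Bogolyubov invariance, domination of the marginal, the equivariance identity, the final Fubini/continuity upgrade) is all correct.

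The genuine error is the selector itself. ``The median of the restriction of $\rho_x$ to the smallest symmetric window of integer length carrying unit mass'' is \emph{not} translation-equivariant, because the window is pinned at the origin. Concretely, take $\rho=\delta_{-5}+\delta_0+\delta_3$: the smallest symmetric window carrying unit mass is $\{0\}$, giving $\Phi(\rho)=0$; but $\tau_4\rho=\delta_{-1}+\delta_4+\delta_7$ has smallest symmetric window $[-1,1]$, whose (lower) median is $-1$, so $\Phi(\tau_4\rho)=-1\neq\Phi(\rho)+4$. Thus $u(x):=\Phi(\rho_x)$ would not satisfy $u\circ\phi^U_t-u=F_t$. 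The fix is both available and simpler: since each $\nu_{T}$ is a probability measure, weak-$\ast$ limits on compact slabs give $\nu(M\times\R)\leq 1$, so after normalizing (your constant $c$) the fibers $\rho_x$ are honest probability measures on $\R$. Then the \emph{lower median} of the whole fiber, $u(x):=\inf\{t\in\Q:\rho_x((-\infty,t])\geq 1/2\}$, is measurable, a.e.\ finite, and translation-equivariant in the exact sense needed, and the rest of your argument goes through. (Your description suggests you envisaged $\rho_x$ possibly infinite, which is why you reached for a windowing trick; it is worth noticing explicitly that they are not.)
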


From the convergence of Cesaro averages, we can prove convergence along an arithmetic progressions in the following sense.
%     Arithmetic progressions     %
\begin{corollary}[Growth along arithmetic progressions]
\label{corollary:arithmetic_progression}\label{cor:arithmetic}
Assume $f$ is not a measurable coboundary for $\phi^U$ and let $C >1$.
For all $\varepsilon >0$ and for all $\ell \geq 1$, there exists an arithmetic progressions $\{ i t_0 \}_{i=1}^{\ell}$ of length $\ell$ such that we have $\mu_{\beta} \{ |F_{i t_0}| < C \} < \varepsilon$, for all $i=1, \dots, \ell$. 
\end{corollary}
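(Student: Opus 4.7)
The strategy is to deduce the existence of a good common step $t_0$ for the finite arithmetic progression from the convergence of the Cesaro average provided by Lemma~\ref{lemma:Gottschalk-Hedlund}, via an averaging (first moment) argument combined with Markov's inequality. Set $g(t):=\mu_{\beta}\{|F_t|<C\}$, so that $0\le g\le 1$ and, by Lemma~\ref{lemma:Gottschalk-Hedlund},
$$
\frac{1}{T}\int_0^T g(t)\,\diff t \longrightarrow 0 \quad \text{as } T\to\infty.
$$

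First, I would observe that for each fixed integer $i\ge 1$, the change of variables $s=it_0$ gives
$$
\frac{1}{T}\int_0^T g(it_0)\,\diff t_0 \;=\; \frac{1}{iT}\int_0^{iT} g(s)\,\diff s,
$$
which also tends to $0$ as $T\to\infty$ by the Cesaro convergence applied at scale $iT$. Summing over $i=1,\dots,\ell$, we get
$$
\frac{1}{T}\int_0^T \Bigl(\sum_{i=1}^{\ell} g(it_0)\Bigr)\,\diff t_0 \;=\; \sum_{i=1}^{\ell} \frac{1}{iT}\int_0^{iT} g(s)\,\diff s \;\longrightarrow\; 0 \quad \text{as }T\to\infty.
$$

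Next, given $\varepsilon>0$, choose $T$ large enough so that the above average is smaller than, say, $\varepsilon/2$. By Markov's inequality applied to the non-negative function $t_0\mapsto \sum_{i=1}^{\ell}g(it_0)$ on $[0,T]$, the set of $t_0\in[0,T]$ for which $\sum_{i=1}^{\ell} g(it_0) \ge \varepsilon$ has Lebesgue measure strictly less than $T/2$. In particular there exists some $t_0\in[0,T]$ such that
$$
\sum_{i=1}^{\ell} g(it_0) < \varepsilon,
$$
and since each summand is non-negative, this forces $g(it_0)=\mu_{\beta}\{|F_{it_0}|<C\}<\varepsilon$ for every $i=1,\dots,\ell$, which is the desired conclusion. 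There is essentially no serious obstacle in this argument: the only mild point to keep track of is the change of variables $s=it_0$, which is legitimate because $g$ is a measurable function bounded by $1$ (hence integrable on every bounded interval), so Fubini/Tonelli applies to the finite sum.
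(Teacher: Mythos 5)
Your proof is correct and takes essentially the same approach as the paper's: both hinge on the change of variables $t_0 \mapsto jt_0$ that converts the Cesaro average at scale $T$ into one at scale $jT$, followed by a finite counting step to locate a good common step $t_0$. The paper phrases this as a union bound on the rescaled bad sets $(0,T)\cap \frac{1}{j}\mathrm{Bad}_\varepsilon$ (with $\delta < 1/\ell$), whereas you sum the functions $g(it_0)$ and apply Markov's inequality; the two are equivalent.
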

\begin{proof}
Fix $\varepsilon >0$ and consider
$$
{\rm{Bad}}_{\varepsilon} := \left\{ t \geq 0 : \mu_{\beta} \{ |F_t| < C\} \geq \varepsilon\right\}.
$$
Let $\ell \in \N$ and fix $0 < \delta <1/\ell$. 
By Lemma \ref{lemma:Gottschalk-Hedlund}, there exists $T_0 >0$ such that $\text{Leb} ( [0,T] \cap {\rm{Bad}}_{\varepsilon}) \leq \delta T$ for all $T \geq T_0$, so that, in particular, $\text{Leb} ( [0,jT] \cap {\rm{Bad}}_{\varepsilon}) \leq \delta jT$ for all $j = 1, \dots, \ell$.
We want to find $t_0 \in (0,T)$ such that $t_0, 2t_0, \dots, \ell t_0 \notin {\rm{Bad}}_{\varepsilon}$; in other words, $t_0 \notin (0,T) \cap \frac{1}{j} {\rm{Bad}}_{\varepsilon}$ for all $j = 1, \dots, \ell$.

We estimate the measure 
\begin{equation*}
\begin{split}
& \text{Leb} \left( [0,T] \setminus \Big( \bigcup_{j=1}^{\ell} (0,T) \cap \frac{1}{j} {\rm{Bad}}_{\varepsilon} \Big)\right) \geq T - \sum_{j=1}^{\ell} \text{Leb} \Big( (0,T) \cap \frac{1}{j} {\rm{Bad}}_{\varepsilon} \Big) \\
& \quad =  T \left( 1- \sum_{j=1}^{\ell} \frac{1}{jT} \text{Leb} \big(  (0,jT) \cap {\rm{Bad}}_{\varepsilon} \big) \right) \geq T(1-\delta \ell),
\end{split}
\end{equation*}
which is greater than zero since $\delta< 1/\ell$. 
In particular, the set 
$$
\{ t_0 \in (0,T) : j t_0 \notin {\rm{Bad}}_{\varepsilon} \text{ for all } j = 1, \dots, \ell \}
$$
is not empty and the result follows.
\end{proof}

%     Decoupling     %
The following is our decoupling result, which generalizes \cite[Lemma 5]{AFU} and \cite[Lemma 5.2]{Rav2} to the case of trigonometric polynomials with respect to $[Z]_\Gamma$ (in the sense of Definition \ref{def:trig_pol}).  
Let us consider trigonometric polynomials relative to $\z:=[Z]_{\Gamma}$ in the sense of Definition ~\ref{def:trig_pol}.  Recall the orthogonal decomposition \eqref{eq:orthogonal} of $L^2(M,\diff\mu)=H_0(\z) \oplus H_0(\z)^\perp$.
%The proof is contained in the next section.
\begin{proposition}[Decoupling] \label{proposition:decoupling}
Let $f 
\in H_0(\z)^\perp$ 
be a trigonometric polynomial w.r.t.~$[Z]_{\Gamma}$ of degree~$m$. Assume $f$ is not a measurable coboundary for $\phi^U$ and let $C >1$.
For every $\varepsilon >0$ there exist $C_0 >1$ and $\varepsilon_0 >0$ such that, for every $t >0$ for which $\mu_\beta \{ |F_t| < C_0 \} < \varepsilon_0$, there exists $T_0 >0$ such that for all $T \geq T_0$ we have
$$
\mu_\beta \{ | F_T \circ \phi_t^{U} - F_T | < 2C \} < \varepsilon.
$$
\end{proposition}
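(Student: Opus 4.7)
My strategy is to restrict the difference $D_{t,T} := F_T \circ \phi^U_t - F_T$ to the central fibers of the toral action $\Phi^{[Z]_\Gamma}$, where it becomes a relative trigonometric polynomial whose coefficients can be analyzed via the cocycle relation, and then to apply the sublevel-set estimate of Lemma~\ref{thm:sublevel_sets_polynomials} fiber by fiber. Decomposing $f = \sum_{v \in \Lambda^\ast\setminus\{0\}} f_v$ with $f_v \in H_v([Z]_\Gamma)$---a finite sum since $\deg f = m$---and using that $\phi^U$ commutes with $\Phi^{[Z]_\Gamma}$ by Lemma~\ref{lemma:time_change_commute}, each ergodic integral $F^v_T(x) := \int_0^T f_v \circ \phi^U_\tau(x)\,\diff\tau$ still lies in $H_v([Z]_\Gamma)$. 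Consequently, for any base point $x_0$,
\begin{equation*}
D_{t,T}(\Phi^{[Z]_\Gamma}_{\bf s} x_0) = \sum_{v \neq 0} \bigl(F^v_T(\phi^U_t x_0) - F^v_T(x_0)\bigr)\, e^{2\pi i \langle v, {\bf s}\rangle}
\end{equation*}
is a trigonometric polynomial in ${\bf s}$ of degree at most $m$, and the cocycle relation~\eqref{eq:cocycle_relation} applied mode by mode rewrites each $v$-th Fourier coefficient equivalently as $F^v_t(\phi^U_T x_0) - F^v_t(x_0)$, whose modulus is constant along each fiber.

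Next, I would apply Lemma~\ref{thm:sublevel_sets_polynomials} to this fiber polynomial: for any $\delta > 0$, its sublevel set $\{{\bf s} : |D_{t,T}(\Phi^{[Z]_\Gamma}_{\bf s} x_0)| < 2C\}$ has fiber measure at most $\Delta_m \delta^{d_m}$ whenever $\sum_{v \neq 0} |F^v_t(\phi^U_T x_0) - F^v_t(x_0)| \geq 2C/\delta$. Fix $\delta$ so that $\Delta_m \delta^{d_m} < \varepsilon/2$, and disintegrate $\mu_\beta$ along $\pi \colon M \to \overline M$ (using that $\beta$ is $[Z]_\Gamma$-invariant). The proposition then reduces to the estimate
\begin{equation*}
\mu_\beta\Bigl\{x \in M : \sum_{v \neq 0} |F^v_t(\phi^U_T x) - F^v_t(x)| < 2C/\delta\Bigr\} < \varepsilon/2,
\end{equation*}
valid for all $T$ sufficiently large, with $C_0$ and $\varepsilon_0$ to be calibrated in terms of $\varepsilon$, $C$, and $m$.

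Producing this lower bound on the sum of fiber Fourier coefficients is the main obstacle. Setting $C_0 := 4C/\delta$, the triangle inequality yields $\sum_v |F^v_t(x)| \geq |F_t(x)| \geq C_0$ outside a set of $\mu_\beta$-measure $\leq \varepsilon_0$, so what remains is to transfer this into a comparable lower bound on $\sum_v |F^v_t \circ \phi^U_T - F^v_t|$ valid for $T$ large. My plan is to implement a \emph{wrapping-in-the-fibers} mechanism analogous to \cite[Lemma~5]{AFU}: by the infinitesimal pushforward of Lemma~\ref{corollary:push_forward} applied to the Heisenberg triple $(U, Y, Z)$, the $Z$-coefficient of $(\phi^U_T)_\ast Y$ equals $-\int_0^T \tfrac{1}{\beta}\circ \phi^U_\tau\,\diff\tau$, which by unique ergodicity of $\phi^U$ grows linearly in $T$; hence a short $Y$-arc gets wrapped a growing number of times around the fiber torus $[Z]_\Gamma/\Lambda$ under $\phi^U_T$. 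Combined with the $H_v$-covariance $F^v_t(\Phi^{[Z]_\Gamma}_{\bf s} y) = e^{2\pi i \langle v, {\bf s}\rangle} F^v_t(y)$ and the unique ergodicity of the projected flow on $\overline M$, this forces the phase twists $\langle v, s^\ast(T,\cdot)\rangle$ to equidistribute modulo $1$ as $T \to \infty$, uniformly for $v$ in the finite Fourier support of $f$. A Fubini-type averaging along the $Y$-arc then converts the pointwise lower bound $\sum_v|F^v_t| \geq C_0$ into $\sum_v|F^v_t \circ \phi^U_T - F^v_t| \geq 2C/\delta$ outside an $\varepsilon/2$-set, completing the proof.
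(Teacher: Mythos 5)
Your proposal identifies the right mechanism and follows essentially the same route as the paper: fiberwise Fourier decomposition $f=\sum_{\bf v}f_{\bf v}$, the cocycle identity $F_T\circ\phi^U_t-F_T = F_t\circ\phi^U_T-F_t$, the sublevel-set estimate of Lemma~\ref{thm:sublevel_sets_polynomials} applied along fibers, and the geometric ``wrapping in the fibers'' produced by the $Z$-coefficient $-B_T = -\int_0^T\tfrac{1}{\beta}\circ\phi^U_\tau\,\diff\tau$ in the pushforward of $Y$ (Lemma~\ref{corollary:push_forward}). The bookkeeping $C_0 = 4C/\delta$ is slightly off compared to the paper's choice $\delta = 4C/\sqrt{C_0}$ (i.e.\ $C_0$ quadratic rather than linear in $1/\delta$), because the cone angle $\theta$ must be chosen with $\sin\theta\sim 1/\sqrt{C_0}$ to balance the measure of ``bad'' wrapping parameters against the size of the resulting lower bound, but this is a calibration issue rather than a conceptual one.

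The genuine gap is in the step you describe as ``a Fubini-type averaging along the $Y$-arc then converts\ldots''. You cannot directly pass from the lower bound $\sum_{\bf v}|F^{\bf v}_t(x)|\geq C_0$ to a lower bound on $\sum_{\bf v}|F^{\bf v}_t(\phi^U_Tx)-F^{\bf v}_t(x)|$ by wrapping alone: you need to compare $F^{\bf v}_t$ at the two endpoints $x$ and $\phi^U_T x$, which are far apart in $M$, and the eigenfunction relation $F^{\bf v}_t\circ\Phi^{[Z]_\Gamma}_{\bf s}=e^{2\pi i\langle{\bf v},{\bf s}\rangle}F^{\bf v}_t$ only lets you transport along the fiber. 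The paper handles this with a \emph{localization} argument that your sketch omits: it introduces the horizontal reparametrized curve $\gamma_{x,T}(s)=\phi^Z_{R(s)}\circ\phi^U_T\circ\phi^Y_s(x)$, where $R(s)$ solves an ODE so that $\gamma_{x,T}$ is tangent to the span of $\{U,Y\}$, and then shows (using the bound $|A_T|/T\to0$ and the choice of the scale $S$ of the $Y$-arc) that $\gamma_{x,T}(s)$ stays within distance $s_0$ of $\phi^U_T(x)$, so $F^{\bf v}_t\circ\gamma_{x,T}(s)$ is nearly constant and the only significant $s$-dependence is the phase $e_{\bf v}(-R(s))$. Only then does the measure estimate for the nearly-linear phase $R(s)$ --- an elementary cone argument, not an equidistribution theorem, and applied for the fixed finitely many $\bf v$ in the support of $f$ --- close the argument. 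Your invocation of ``unique ergodicity of the projected flow on $\overline M$'' plays no role in this step and does not substitute for the missing localization: without it, the comparison between $F_t^{\bf v}(\phi^U_T\circ\phi^Y_s x)$ and $e_{\bf v}(-R(s))F_t^{\bf v}(\phi^U_T x)$ is uncontrolled, and the phase twist by itself is not enough.
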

The proof of Proposition \ref{proposition:decoupling}  is given in the following subsection \S\ref{sec:decoupling}. We now use Corollary~\ref{cor:arithmetic} and Proposition~\ref{proposition:decoupling} to 
prove Theorem \ref{thm:stretch_ergodic_integral}. 

%     Proof of the Theorem     %
\begin{proof}[Proof of Theorem \ref{thm:stretch_ergodic_integral}] 
By Remark~\ref{rk:equivalent},  the statement of the theorem is equivalent to the statement that $\lim_{t \to \infty} \mu_\beta \{ |F_t| < C\} =0$, for any $C>1$.
It is therefore enough to prove the latter.

%it is enough to prove the equivalent statement that,  for any $C>1$,   $\lim_{t \to \infty} \mu_\beta \{ |F_t| < C\} =0$. 
By Lemma \ref{lemma:Gottschalk-Hedlund}, we have that $\liminf_{t \to \infty} \mu_{\beta} \{ |F_t| < C\} =0$.  Thus, it is enough to assume by contradiction that 
 that $L := \limsup_{T \to \infty} \mu_{\beta} \{ |F_T| < C\} >0$. 
Let us choose $\ell \in \N$ and $\varepsilon >0$ such that
\begin{equation}
\label{eq:assumption_limsup}
\frac{1}{\ell} + \frac{\ell+1}{2} \varepsilon < \frac{L}{2}.
\end{equation}
%Equivalence of $\mu_\beta$ and $\mu$ means that there exists  constants $c_\beta, C_\beta>0$ such that $c_\beta\mu(E) \leq \mu_\beta(E)\leq C_\beta \mu(E)$ for every measurable set $E\subset M$. 
Let us then consider  $C_0 >1$ and $\varepsilon_0>0$ given by Proposition \ref{proposition:decoupling}  for a given %$\varepsilon/C_\beta$. 
$\varepsilon >0$.

Let $\{ i t_0 \}_{i=1}^{\ell}$ be an arithmetic progression of length $\ell$, such that  
%$\mu \{ |F_{i t_0}| < C_0 \} <c_\beta^{-1}$ $\mu_{\beta} \{ |F_{i t_0}| < C_0 \} < \varepsilon_0$. 
 $\mu_{\beta} \{ |F_{i t_0}| < C_0 \} < \varepsilon_0$, for all $i=1, \dots,  \ell$, which exists by Corollary~\ref{corollary:arithmetic_progression}.
%Let $T_0(i) > 0$ be given by Proposition \ref{proposition:decoupling} for $t = i t_0$ and  let  $T_0$ denote the maximum of all $T_0(i)$ for $i = 1, \dots, \ell$. 
Thus, by Proposition \ref{proposition:decoupling},  there exists $T_0>0$ such that for all $1\leq k\leq \ell$ and for all $T\geq T_0$,
%(since $T_0\geq T_0(k)$), 
we have

\begin{equation}\label{eq:decoupledbeta}
\mu_{\beta} \left( |F_T \circ  \phi_{k t_0}^{U} - F_T|  < 2C \right) \leq   \varepsilon\,.
\end{equation}
% \leq C_\beta \mu \left( |F_T \circ  \phi_{k t_0}^{U} - F_T| < 2C \right)\leq C_\beta\frac{\varepsilon}{C_\beta}=\varepsilon.
For all $T \geq T_0$, the Inclusion-Exclusion Principle yields
\begin{equation*}
\begin{split}
\mu_{\beta} \left( \bigcup_{i=1}^{\ell} \phi_{-it_0}^{U} \{ |F_T| <C \} \right) \geq &\sum_{i=1}^{\ell} \mu_{\beta} \left( \phi_{-it_0}^{U} \{ |F_T| <C \} \right) \\
& - \sum_{1 \leq j <i \leq \ell} \mu_{\beta} \left( \phi_{-it_0}^{U} \{ |F_T| <C \} \cap \phi_{-jt_0}^{U} \{ |F_T| <C \}\right).
\end{split}
\end{equation*}
Since $\phi^U_{\R}$ preserves the measure $\mu_\beta$,  for every $t\in\mathbb{R}$, 
For all $1 \leq j <i \leq \ell$,  we have
\begin{equation*}
\begin{split}
&\mu_{\beta} \left( \phi_{-it_0}^{U} \{ |F_T| <C \} \cap \phi_{-jt_0}^{U} \{ |F_T| <C \}\right) = \mu_{\beta} \left(\{ |F_T \circ  \phi_{it_0}^{U}| <C \} \cap \{ |F_T \circ \phi_{jt_0}^{U}| <C \}\right) \\
& \quad \leq \mu_{\beta} \left( |F_T \circ  \phi_{it_0}^{U} - F_T \circ \phi_{jt_0}^{U} | < 2C \right) = \mu_{\beta} \left( |F_T \circ  \phi_{(i-j)t_0}^{U} - F_T| < 2C \right),
\end{split}
\end{equation*}
which, by \eqref{eq:decoupledbeta}, is less than $\varepsilon$, since $0 \leq i-j \leq \ell$ and $T \geq T_0 $.

Choose now $T \geq T_0$ for which $ \mu_{\beta} \{ |F_t| < C\} > L/2$. 
By the computations above and since the flow $\phi^{U}$ is measure-preserving, we thus obtain
$$
1 \geq \mu_{\beta} \left( \bigcup_{i=1}^{\ell} \phi_{-it_0}^U \{ |F_T| <C \} \right) \geq \ell  \mu_{\beta} ( |F_T| <C ) - \frac{\ell (\ell+1)}{2} \varepsilon \geq \ell \frac{L}{2} - \frac{\ell (\ell+1)}{2} \varepsilon. 
$$
This yields the inequality $L/2 \leq 1/\ell + (\ell +1) \varepsilon/2$, in contradiction with the initial assumption~\eqref{eq:assumption_limsup}. 
The proof is therefore complete.
\end{proof}

%%%%%%%%%%

\subsection{Decoupling: proof of Proposition \ref{proposition:decoupling}} \label{sec:decoupling}

This subsection (the final one in this Section 4) is devoted to the proof of Proposition \ref{proposition:decoupling}.
Let us first make some preliminary remarks. 

We first observe that, as the measures $\mu$ and $\mu_\beta$ are equivalent, in the sense that 
that there exists a constant $C_\beta>0$ such that $C_\beta^{-1}\mu  \leq \mu_\beta \leq C_\beta \mu$,  it is enough to prove 
Proposition~\ref{proposition:decoupling} when in the  hypothesis and in the conclusion we replace the measure $\mu_\beta$ by the measure $\mu$.
We will therefore carry our the argument in the latter case.  Let then
$$
f = \sum_{{{\bf v}} \in \Lambda^\ast \setminus \{0\}} f_{{\bf v}}  \in L^2(M,\diff\mu)  \quad  \text{ and }  \quad F_T = \sum_{{{\bf v}} \in \Lambda^\ast \setminus \{0\}} F^{{\bf v}}_T 
 \in L^2(M,\diff\mu)
$$ 
denote the fiberwise Fourier decompositions with respect to the action of the torus tangent to the central Lie subalgebra $[Z]_\Gamma$. Since, by Lemma~\ref{lemma:time_change_commute} the flow $\phi^U$ commutes with the action of this torus, we have
$$
F^{{\bf v}}_T = (f_{\bf v} )_T =  \int_0^T   f_{\bf v} \circ \phi^U_t   dt \,.
$$
Since by assumption $f$ is a trigonometric polynomial with respect to $[Z]_\Gamma$ of degree $m\geq 1$, according to Definition \ref{def:trig_pol}, 
and since $[Z]_\Gamma \subset Z(\mathfrak{g})$ and $\beta \in H_0([Z]_\Gamma)$ (see also Lemma \ref{lemma:time_change_commute} above),
 for all $T, t >0$ the functions $F_T$ and $F_T \circ \phi ^U_t -F_T$  are also trigonometric polynomial with respect to $[Z]_\Gamma$ of degree $m\geq 1$.
 In fact, their  Fourier coefficients  $F^{\bf v}_T$ and $(F_T \circ \phi ^U_t -F_T)_{\bf v} =F^{\bf v}_T \circ \phi ^U_t -F^{\bf v}_T$ are equal to zero  
 for all ${\bf v} \in \Lambda^\ast$ such that $f_{\bf v}$ is equal to zero.

By the cocycle relation for ergodic integrals \eqref{eq:cocycle_relation}, for any $t,T >0$, we have
$$
 F_T \circ \phi^{U}_t - F_T =  F_t \circ \phi^{U}_T - F_t.
$$

Hence, by \eqref{eq:Fourier_exp}, we have 
$$
(F_t \circ \phi^{U}_T - F_t) \circ \Phi^{[Z]_\Gamma}_{\bf t}(x) =  \sum_{{{\bf v}} \in \Lambda^\ast \setminus \{0\}}(F^{{\bf v}}_t \circ \phi^{U}_T - F^{{\bf v}}_t) (x)\cdot \exp(2\pi\imath \langle {{\bf v}}, {\bf t}\rangle ).
$$

In the proof of Proposition \ref{proposition:decoupling}, we will deal with the push-forward of short curves in direction $Y$ under the flow $\phi^U$. 
Let us define 
$$
A_t(x) := - \int_0^{t}   \frac{Y\beta}{\beta} \circ \phi^U_\tau (x) \diff \tau  \quad \text{ and } \quad  B_t(x):=  \int_0^{t}   \frac{1}{\beta} \circ \phi^U_\tau (x) \diff \tau\,.
$$
By Lemma~\ref{corollary:push_forward} (with $\alpha$ replaced by $\beta$), taking into account that $Z\beta =0$, for any $x \in M$ we can write
$$
\frac{\diff}{\diff s} \phi^U_t \circ \phi^Y_s (x) = [A_t \circ \phi^Y_s(x)] U(\phi^U_t \circ \phi^Y_s(x) ) +   Y  (\phi^U_t \circ \phi^Y_s(x) ) - [ B_t \circ \phi^Y_s(x) ] Z(\phi^U_t \circ \phi^Y_s(x)) \,.
$$
By unique ergodicity of the flow $\phi^{U}$, whose invariant measure is $\diff \mu_\beta= \beta \diff \mu$, we have that 
$$
\lim_{t\to +\infty} \frac{B_t(x)}{t} = \int_M  \frac{1}{\beta} \diff \mu_\beta = 1 \text{\ \ \ and\ \ \ } \lim_{t\to +\infty} \frac{A_t(x)}{t} = -\int_M  \frac{Y\beta}{\beta} \diff \mu_\beta = 0,
$$
uniformly in $x \in M$.

\medskip

\begin{proof}[Proof of Proposition \ref{proposition:decoupling}]
Let us first fix the parameters that we will use in the proof.

\smallskip
\noindent {\it Choice of parameters.} 

Notice that, since $[Z]_\Gamma$ is the smallest $\Gamma$-rational subspace containing $Z$, then $\langle {{\bf v}}, Z \rangle \neq 0$ for all ${{\bf v}} \in \Lambda^\ast \setminus \{0\}$.
Let $c_{m,Z}>0$ be the minimum of $|\langle {{\bf v}}, Z \rangle |$ for all ${{\bf v}} \in \Lambda^\ast \setminus \{0\}$.

Let $\Delta_m$, $d_m$ be given by Lemma \ref{thm:sublevel_sets_polynomials}.
Let $C>1$ and $\varepsilon >0$ and let $N\geq 1$ denote the cardinality of ${{\bf v}}\in \Lambda^\ast$ such that  
 $f_{{\bf v}}$ is non zero. Let us then fix $C_0 > N^2$
 and $\varepsilon_0 >0$ such that 
\begin{equation}
\label{eq:C_0_e_0}
\Delta_m \left( \frac{4C}{\sqrt{C_0}} \right)^{d_m} +  \frac{2 N}{\sqrt{C_0} } + \varepsilon_0 < \varepsilon.
\end{equation}
Define $\delta = 4C / \sqrt{C_0}$.

Let $0< \eta < 1/2$ be such that $(1+\eta)/(1-\eta) < \sqrt{2}$.
Choose $\theta \in (0,\pi/2)$ such that 
$$
\frac{1}{\sqrt{C_0}} \leq \sin \theta \leq \frac{1-\eta}{1+\eta} \sqrt{\frac{2}{C_0}}. 
$$
In this way, we also have
\begin{equation}\label{eq:choicetheta}
\frac{\theta}{\pi} < \sin \left(\frac{\theta}{2} \right) \leq \frac{\sin \theta}{\sqrt{2}} \leq \left( \frac{1-\eta}{1+\eta} \right) \frac{1}{\sqrt{C_0}}.
\end{equation}

Fix $t >0$ as in the assumptions of Proposition \ref{proposition:decoupling}, i.e.~such that $\mu \{ |F_t| < C_0 \} < \varepsilon_0$. In particular, since $|F_t| \leq \sum_{{\bf v}} |F_t^{{\bf v}}|$, we have that 
\begin{equation}
\label{eq:decoupling2}
\mu (N_t) < \varepsilon_0, \text{\ \ \ where\ \ \ } N_t :=  \left\{ x \in M :  \sum_{{{\bf v}} \in \Lambda^\ast \setminus \{0\}}  | F_t^{{\bf v}}(x)| < C_0 \right\}. 
\end{equation}
By uniform continuity of $F_t^{{\bf v}}$, let $s_0 >0$ be such that 
%if $|s|\leq s_0$ and $x':=\varphi^Y_s(x)$ 
 if the distance between any two points $x, x\rq{} \in M$ is less than $s_0$,
then $|F_t^{{\bf v}}(x) - F_t^{{\bf v}}(x\rq{})| < 1/4$ for all $ {{\bf v}} \in \Lambda^\ast \setminus \{0\}$ (we use here that $f$ being a trigonometric polynomial, the number of non-zero $f_{{\bf v}}$ and hence of non-zero $F_{{\bf v}}$ is finite).

Fix $T_0 > 8/(c_{m,Z}s_0)$
be such that for all $T \geq T_0$ and all $x \in M$ we have
$$
\left\lvert \frac{B_T(x)}{T} - 1 \right\rvert \leq \eta \text{\ \ \ and\ \ \ } \left\lvert \frac{A_T(x)}{T} \right\rvert \leq \frac{c_{m,Z}}{8 \| \beta^{-1}\|_\infty}s_0.
$$
Finally, let $T \geq T_0$ and define $S= 2/(Tc_{m,Z})$.

\medskip
\noindent {\it Strategy.}  
We want to apply  Lemma~\ref{thm:sublevel_sets_polynomials} to $F_T \circ \phi^{ U }_t - F_T =  F_t \circ \phi^{U }_T - F_t$. Consider the set 
$$
E_{C_0} := \left\{ x \in M :  \sum_{{{\bf v}} \in \Lambda^\ast \setminus \{0\}} | (F_t^{{\bf v}} \circ \phi^{U}_T - F_t^{{\bf v}})(x) | < \frac{\sqrt{C_0}}{2} \right\}.
$$
If we can prove that 
\begin{equation}\label{eq:EC0estimate} \mu (E_{C_0})<\varepsilon_0+\frac{2N}{\sqrt{C_0}},
\end{equation}
 then  Lemma~\ref{thm:sublevel_sets_polynomials} (for $\delta:= 4C/ \sqrt{C_0}$) and the choice of parameters (see \eqref{eq:C_0_e_0}) give that 
\begin{equation*}
\mu \{ | F_T \circ \phi^{U}_t - F_T | < 2C \} \leq \Delta_m \left( \frac{4C}{\sqrt{C_0}} \right)^{d_m} + \mu (E_{C_0})  < \Delta_m \left( \frac{4C}{\sqrt{C_0}} \right)^{d_m}+ \varepsilon_0+ \frac{2N}{\sqrt{C_0}} < \varepsilon,
\end{equation*}
which proves the Lemma. We hence just have to prove the estimate \eqref{eq:EC0estimate} for  $\mu(E_{C_0})$.

\smallskip \noindent {\it Reduction of space to time estimates.} 
%We consider the first term on the right-hand side of \eqref{eq:decoupling1}.
%Since we fixed $\delta = 4C / \sqrt{C_0}$, define the set
Let us remark that, since  the Haar measure $\mu$ is invariant under the flow $\phi^Y_{\R}$,  we have
\begin{equation}
\label{eq:decoupling3}
\begin{split}
& \mu (E_{C_0}) = \frac{1}{ S } \int_0^{S} \left( \int_M \one_{ E_{C_0} } \circ \phi^Y_{s}  \diff \mu \right)\diff s = \int_M \left( \frac{1}{S } \int_0^{S} \one_{ E_{C_0} } \circ \phi^Y_{s}  \diff s \right) \diff \mu \\
& \quad = \int_M \frac{1}{S} \text{Leb} \left( s \in [0, S ] :  \sum_{{{\bf v}} \in \Lambda^\ast \setminus \{0\}} | (F_t^{{\bf v}} \circ \phi^{U}_T - F_t^{{\bf v}})  \circ \phi^Y_s (x)| < \frac{\sqrt{C_0}}{2}  \right) \diff \mu.
\end{split}
\end{equation}

For any fixed ${{\bf v}}$ and $x \in M$, consider the term $(F_t^{{\bf v}} \circ \phi^{U}_T - F_t^{{\bf v}})  \circ \phi^Y_s (x)$.

Recalling the definition of $H_{{\bf v}} ([Z]_\Gamma)$ in \S\ref{sec:trig_pol}, for any $r \in \R$ we can write $F_t^{{\bf v}} = e_{{\bf v}}(-r) F_t^{{\bf v}} \circ \phi^Z_r$, where we denoted $e_{{\bf v}}(r) = \exp(2 \pi \imath r \langle {{\bf v}}, Z  \rangle) $. 
We now choose $r=R(s)$ as follows. 

Let $R(s):= R(x,T,s)$ be the function defined by the condition that the curve 
$$
\gamma_{x,T}(s):= \phi^Z_{R(s)} \circ \phi^U_T \circ \phi^Y_s\,, \quad \text{ for } \, s\in [0, S],
$$ 
be ``horizontal'', that is, tangent to the distribution spanned by $\{U, Y\}$. This condition is tantamount to requiring that $R(s)$ be a solution of the ODE
$$
\frac{\diff}{\diff s} R(s) = B_T \circ \phi_{R(s)}^Z \circ \phi^Y_s(x),
$$
with the initial condition $R(0)=0$ (indeed, the solution to the ODE above is unique and defined for all $s \in \R$).
Remark that, by the bounds on $B_T$, we have
\begin{equation}\label{eq:bound_on_Rs}
sT(1-\eta) \leq R(s) \leq sT(1+\eta), \text{\ \ \ for all\ } s \in [0,S].
\end{equation}
Then, 
$$
 (F_t^{{\bf v}} \circ \phi^U_T - F_t^{{\bf v}})  \circ \phi^Y_s = e_{{\bf v}}(-R(s)) F_t^{{\bf v}} \circ \phi^Z_{R(s)} \circ \phi^U_T \circ \phi^Y_s - F_t^{{\bf v}}  \circ \phi^Y_s.
$$
%For the sake of notation, let us call $\gamma_{x,T}(s) = \phi^Z_{R(s)} \circ \phi^U_T \circ \phi^Y_s(x)$.

We hence have to estimate  for  fixed $x$ (which will be chosen in certain set)
$$
\text{Leb} \left( s \in [0, S ] : \sum_{{{\bf v}} \in \Lambda^\ast \setminus \{0\}}   \left\lvert 
e_{{\bf v}}(-R(s)) F_t^{{\bf v}} \circ \gamma_{x,T}(s)- F_t^{{\bf v}}  \circ \phi^Y_s (x)
\right\rvert < \frac{\sqrt{C_0}}{2} \right).
$$
In order to use decoupling arguments  to estimate the measure of the above parameters $s\in  [0, S]$, we first want to \emph{localize} the function, i.e.~show that 
the term in the modulus above does not vary much in $s$, so we can eliminate the dependence on  $s$.

\smallskip
\noindent{\it Localization.} 
Let us show that we can assume that $F_t^{{\bf v}} \circ \gamma_{x,T}(s)$
 and $  F_t^{{\bf v}} \circ \phi^Y_s$ are essentially constant and equal to $F_t^{{\bf v}}  \circ \phi^{U }_T $ and $  F_t^{{\bf v}}$ (corresponding to $s=0$).  
 
We shall estimate from below the difference $\vert F^{\bf v}_T \circ \phi^U_T \circ \phi^Y_s - F^{\bf v}_T \circ \phi^Y_s\vert$ by taking
advantage of the decomposition of the path $\phi^U_T \circ \phi^Y_s$, for $s\in [0,T]$,  into the ``horizontal'' path $\gamma_{x,T}(s)$ and the 
``vertical" path $\phi^{Z}_{-R(s)}$, for $s\in [0,S]$, along which the function $F^{\bf v}_T$  is controlled.
 
By triangle inequality, adding and subtracting $\big(F_t^{{\bf v}} + e_{{\bf v}}(-R(s))   F_t^{{\bf v}} \circ \phi^U_T \big) (x)$, we can write

\begin{equation*}
\begin{split}
 \sum_{{{\bf v}} \in \Lambda^\ast \setminus \{0\}}  & | (F_t^{{\bf v}} \circ \phi^U_T - F_t^{{\bf v}})  \circ \phi^Y_s (x)|   = \sum_{{{\bf v}} \in \Lambda^\ast \setminus \{0\}}| e_{{\bf v}}(-R(s))  F_t^{{\bf v}} \circ \gamma_{x,T}(s) - F_t^{{\bf v}} \circ \phi^Y_s (x) |\\ & \qquad 
 \quad \geq \sum_{{{\bf v}} \in \Lambda^\ast \setminus \{0\}} \Big(  | e_{{\bf v}}(-R(s))  F_t^{{\bf v}}  \circ \phi^U_T (x) - F_t^{{\bf v}}  (x) |  - |F_t^{{\bf v}}  \circ \phi^Y_s (x) -  F_t^{{\bf v}}  (x)| \\ & \qquad \qquad \qquad \qquad \qquad \qquad \qquad \qquad
- |e_{{\bf v}}(-R(s)) | \cdot | F_t^{{\bf v}} \circ \gamma_{x,T}(s) - F_t^{{\bf v}}  \circ \phi^U_T (x)|  \Big). 
\end{split}
\end{equation*}
Since $0\leq s\leq S \leq s_0/2$, by definition of $s_0$, the term $ |F_t^{{\bf v}}  \circ \phi^Y_s (x) -  F_t^{{\bf v}}  (x)| $ in brackets above is less than $1/4$. 
Once we show that the distance between $\gamma_{x,T}(s)$ and $\phi^U_T(x)$ is less than $s_0$ as well, then (recalling $N$ is the number of non-zero coefficients $F_t^{{\bf v}}$) we will get
\begin{equation}\label{eq:term_to_estimate}
\begin{split}
\sum_{{{\bf v}} \in \Lambda^\ast \setminus \{0\}} | (F_t^{{\bf v}} \circ \phi^U_T - F_t^{{\bf v}})  \circ \phi^Y_s (x)| & \geq  \sum_{{{\bf v}} \in \Lambda^\ast \setminus \{0\}} \left(  | e_{{\bf v}}(-R(s))  F_t^{{\bf v}}  \circ \phi^U_T (x) - F_t^{{\bf v}}  (x) |  - \frac{1}{4} - \frac{1}{4}\right) \\
& \geq \left( \sum_{{{\bf v}} \in \Lambda^\ast \setminus \{0\}} | e_{{\bf v}}(-R(s))  F_t^{{\bf v}}  \circ \phi^U_T (x) - F_t^{{\bf v}}  (x) |  \right) - \frac{N}{2}.
\end{split}
\end{equation}

Let us prove our claim: the distance between $\gamma_{x,T}(s)$ and $\phi^U_T(x) = \gamma_{x,T}(0)$ can be bounded by $S$ times the maximum of the modulus of the tangent vectors of $\gamma_{x,T}(s)$. 
Using the chain rule and recalling that $Z$ commutes with $Y$ and $U$, we compute
\begin{equation*}
\begin{split}
&\frac{\diff}{\diff s} \gamma_{x,T}(s) = \frac{\diff}{\diff s} \phi^Z_{R(s)} \circ \phi^U_T \circ \phi^Y_s(x) = \left(\frac{\diff}{\diff s} R(s) \right) Z(\gamma_{x,T}(s)) + [D\phi^Z_{R(s)}] \left(\frac{\diff}{\diff s} \phi^U_T \circ \phi^Y_s(x) \right) \\
&= \left(\frac{\diff}{\diff s} R(s) - [ B_T \circ \phi^Z_{R(s)} \circ \phi^Y_s(x) ] \right) Z(\gamma_{x,T}(s)) + [A_T \circ \phi^Z_{R(s)} \circ \phi^Y_s(x)] U(\gamma_{x,T}(s) ) +   Y  (\gamma_{x,T}(s) ).
\end{split}
\end{equation*}
The first term in the right hand-side above is zero, by our choice of $R(s)$. By definition of $T_0$, we can bound
$$
\left\lvert \frac{\diff}{\diff s} \gamma_{x,T}(s) \right\rvert \leq 1 + \|A_T \|_\infty \cdot \| \beta^{-1} \|_\infty \leq 1+  \frac{c_{m,Z}}{8}Ts_0.
$$
Therefore, the distance between $\gamma_{x,T}(s)$ and $\gamma_{x,T}(0) = \phi^U_T(x)$ can be bounded by
$$
S \cdot \left\lvert \frac{\diff}{\diff s} \gamma_{x,T}(s) \right\rvert \leq S \left( 1+ \frac{c_{m,Z}}{8}Ts_0 \right) \leq  S + \frac{s_0}{2} \leq s_0,
$$
which proves our claim, and hence gives us \eqref{eq:term_to_estimate}.

\smallskip
\noindent{\it Decoupling. }
We now hence want to estimate the sum in the right hand-side of \eqref{eq:term_to_estimate}. We shall do so by showing that the quantity $R(s)$, for $s \in [0,S]$, grows
sufficiently and nearly linearly; consequently the complex number $e_{\bf v} (-R(s))$ 
is far away from any given point on the unit circle in $\C$,  for a subset of parameters
$s \in [0,S]$ of large relative measure.

Let $c_1:= F_t^{{\bf v}}  (x) \in \C$ and $c_2 :=F_t^{{\bf v}}  \circ \phi^U_T (x) \in \C$. 
Since, as we already remarked, $ \langle {{\bf v}}, Z \rangle \neq 0$ for all ${{\bf v}}\in \Lambda^\ast \setminus \{0\}$, by elementary trigonometry,
any point $c\rq{} \in \C$ outside the cone of $1/2$-angle $\theta$ about the line $\R c_1$ has distance $|c\rq{}-c_1|$ from $c_1$ larger than the distance of $c_1$ from the boundary of the cone  (see Figure \ref{fig:cone}).
Thus, for the  parameter $\theta$ chosen as in the beginning, 
$| e_{{\bf v}}(-R(s))c_2 -c_1|> |c_1|\sin \theta $ as long as the phase $-\langle {{\bf v}}, Z \rangle R(s)\mod 2\pi$ does not fall in an interval of size $2\theta$. If $R(s)$ was linear in $s$, namely if $R(s) = sT$, the measure of the set of $s \in [0,S]$  for which $|e_{{\bf v}}(-sT) c_2 - c_1 |\leq |c_1| \sin \theta$ would be bounded by $S \theta/\pi$ plus a \lq\lq{}boundary error\rq\rq{}\ term of size $2( | \langle {{\bf v}}, Z \rangle | T)^{-1}$.
In our case, although $R(s)$ is not necessarily linear in $s$, it satisfies the bounds in \eqref{eq:bound_on_Rs}; thus, the measure of the set of $s \in [0,S]$ for which $-\langle {{\bf v}}, Z \rangle R(s) \mod 2\pi$ does not fall in an interval of size $2\theta$ can be bounded by the measure of the set of $s \in [0,S]$ for which $-\langle {{\bf v}}, Z \rangle sT \mod 2\pi/(1+\eta)$ does not fall in an interval of size $2\theta/(1-\eta)$.
In particular,
the measure of the set of $s \in [0, S]$ for which $|e_{{\bf v}}(-R(s)) c_2 - c_1 |\leq |c_1| \sin \theta$ can bounded by $S \frac{\theta(1+\eta)}{\pi (1-\eta)}$ plus a \lq\lq{}boundary error\rq\rq{}, namely
\begin{multline*}
\text{Leb} \left( s \in [0,S] : \left\lvert e_{{\bf v}}(-R(s))  F_t^{{\bf v}}  \circ \phi^U_T (x) - F_t^{{\bf v}}  (x) \right\rvert \leq | F_t^{{\bf v}}  (x) | \sin \theta \right) \\ 
\leq S \frac{\theta (1+\eta)}{\pi(1-\eta)} + 2 \frac{\theta(1+\eta)}{\pi(1-\eta)} \cdot \frac{1}{ | \langle {{\bf v}}, Z \rangle | T(1-\eta)} \leq \frac{\theta(1+\eta)}{\pi(1-\eta)} \left(S + \frac{4}{ | \langle {{\bf v}}, Z \rangle | T} \right).
\end{multline*}

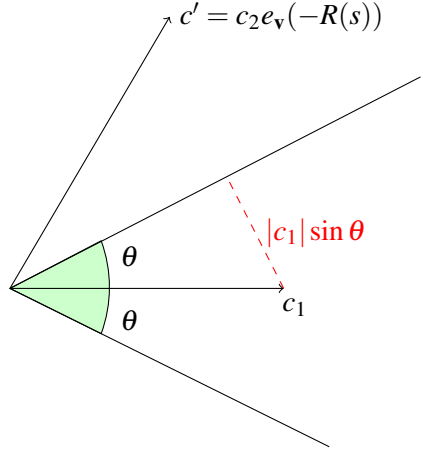
\begin{figure}[ht!]
\centering
\begin{tikzpicture}[scale=3]
\clip (-0.2,-0.8) rectangle (2.5,1.3);
\draw [->] (0,0) -- (0.7,1.2) node [anchor=west] {$c\rq{} = c_2  e_{{\bf v}}(-R(s)) $};
\filldraw[fill=green!20!white] (0,0) -- (0.4,-0.2) arc (-20:20:6mm)  -- cycle;
\draw [->] (0,0) -- (1.2,0);
\draw (1.25,0) node [anchor=north] {$c_1$};
\draw (0,0) -- (1.4,-0.7);
\draw (0,0) -- (1.8,0.935);
\draw (0.44,0.14) node [anchor=west] {$\theta$};
\draw (0.44,-0.14) node [anchor=west] {$\theta$};
\draw[dashed, red] (1.2,0) -- (0.95,0.495) node [pos=0.5,right] {\textcolor{red}{$|c_1| \sin \theta$}};
\end{tikzpicture}
\caption{ Any point $c\rq{} \in \C$ outside the cone of 1/2-angle $\theta$ about the line $\R c_1$ has distance $|c\rq{}-c_1|$ from $c_1$ larger than the distance of $c_1$ from the boundary of the cone.} %
\label{fig:cone}
\end{figure}

Recall we defined $c_{m,Z}>0$ to be the minimum of $ | \langle {{\bf v}}, Z \rangle | $ for all ${{\bf v}} \in \Lambda^\ast \setminus \{0\}$. Thus, outside a set of  $ s \in [0, S ]$ of measure at most $N \frac{\theta(1+\eta)}{\pi(1-\eta)}(S + \frac{4}{c_{m,Z} T} )$, we have that 
\begin{equation*}
\begin{split}
\left( \sum_{{{\bf v}} \in \Lambda^\ast \setminus \{0\}} | e_{{\bf v}}(- R(s) )  F_t^{{\bf v}}  \circ \phi^U_T (x) - F_t^{{\bf v}}  (x) |  \right) - \frac{N}{2} & > \left( \sum_{{{\bf v}} \in \Lambda^\ast \setminus \{0\}} | F_t^{{\bf v}} (x) | \sin \theta \right) - \frac{N}{2} \\
&  \geq \frac{\sum_{{{\bf v}}\in \Lambda^\ast \setminus \{0\} } | F_t^{{\bf v}} (x) |}{\sqrt{C_0}}- \frac{\sqrt{C_0}}{2} .
\end{split}
\end{equation*}

When $x \notin N_t$, by the definition in \eqref{eq:decoupling2}, the last term above is greater or equal to $\sqrt{C_0}/2$.
Therefore, for all $x \notin N_t$,
\begin{equation*}
\text{Leb} \left( s \in [0, S ] :  \sum_{{{\bf v}} \in \Lambda^\ast \setminus \{0\}} | (F_t^{{\bf v}} \circ \phi^U_T - F_t^{{\bf v}})  \circ \phi^Y_s (x)| < \frac{\sqrt{C_0}}{2} \right) \leq  N \frac{\theta(1+\eta)}{\pi(1-\eta)} \left( S + \frac{4}{c_{m,Z} T} \right).
\end{equation*}
After dividing by $S$ and integrating over $M \setminus N_t$, by our choice of $\eta$ and $S$, we get
\begin{equation}
\begin{split}
\label{eq:decoupling4}
& \int_{M \setminus N_t} \frac{1}{S} \text{Leb} \left( s \in [0,S] : \sum_{{{\bf v}} \in \Lambda^\ast \setminus \{0\}} | (F_t^{{\bf v}} \circ \phi^U_T - F_t^{{\bf v}})  \circ \phi^Y_s (x)| < \frac{\sqrt{C_0}}{2} \right) \diff \mu \\ 
&\qquad  \quad \leq N \frac{\theta(1+\eta)}{\pi(1-\eta)}  \left( 1 + \frac{4}{c_{m,Z} S T} \right) < \frac{2 N}{\sqrt{C_0} }.
\end{split}
\end{equation}
From \eqref{eq:decoupling3} and \eqref{eq:decoupling4}, it follows that 
$
\mu (E_{C_0}) < \mu (N_t) + \frac{2 N}{\sqrt{C_0} } \leq \varepsilon_0 +  \frac{2 N}{\sqrt{C_0} },
$
which concludes the proof of \eqref{eq:EC0estimate} and hence of the Proposition.
 \end{proof}

%%%%%%%%%%

\section{Inductive proof of mixing}\label{sec:proof}
We can now prove the main result, Theorem~\ref{thm:main}. We will decompose the time-change using the towers of nilflows extensions built in \S\ref{sec:induction} (see \S\ref{sec:decomposition}), then explain the two mechanisms which, at each step of the tower, either allow to directly produce mixing from stretching of Birkhoff sums (this is the \emph{non-coboundary} case in \S\ref{sec:noncoboundary_case}), or, in the \emph{coboundary case} in \S\ref{sec:coboundary_case}, allow to \emph{lift} mixing from the factor to the extension. The proof by induction, which combines these two steps, is then given at the end, in \S\ref{sec:final}.

\smallskip

Let us recall (see \S\ref{sec:basic}) that the time-change induced by $V = \frac{1}{\alpha}X$ is measurably trivial if and only if $\frac{1}{\alpha}$ is a measurable almost coboundary for $\phi^V$, or, equivalently, if and only if $\alpha$ is a measurable almost coboundary for $\phi^X$.

 \subsection{Decomposition of the time-change along the tower}\label{sec:decomposition}

 Given a uniquely ergodic nilflow $\phi^X$ on the nilmanifold $M$,   let  $\mathcal{T}_{M,X}$  be a   be a tower of Heisenberg extensions for $M$ based at $X$, whose existence is guaranteed by Corollary \ref{corollary:existence_of_towers}. Consider time-changes which are positive, real-valued trigonometric polynomials belonging to the class   $\mathscr{P}_{\mathcal{T}} $ consisting of  trigonometric polynomials with respect to this tower (see Definition~\ref{def:trig_pol_tower}). Density of this class follows from Corollary~\ref{cor:density}. 
  
Let $\alpha \in \mathscr{P}_{\mathcal{T}} $ be a trigonometric polynomials with respect to this tower (see Definition~\ref{def:trig_pol_tower}), and set $\alpha^{(0)}:=\alpha$. 
Recall (from \S\ref{sec:induction}) that we denote by $\z^{(i)}:= [Z]_{\Gamma^{(i)} }$ and $\Lambda^{(i)} := \log \Gamma^{(i)} \cap [Z]_{\Gamma^{(i)} }$ for all $0 \leq i \leq h$.

Assume we defined  $\alpha^{(i)} \in \mathscr{P}^{(i)}\subset \mathscr{C}^{\infty}(M^{(i)})$ (where $\mathscr{P}^{(i)}$ is the space of trigonometric polynomials relative to $\z^{(i)}$ defined in Definition~\ref{def:trig_pol_tower}) for $0\leq i<h$. 
We consider the decomposition associated to the orthogonal decomposition (defined in \S\ref{sec:trig_pol})
$$
L^2(M^{(i)}) = H_0(\z^{(i)})\oplus H_0(\z^{(i)})^{\perp}, \qquad \text{where}\quad  H_0(\z^{(i)}) := (\pi^{(i+1)})^{\ast} \big(L^2(M^{(i+1)}) \big) ,
$$
namely we write
$$
\alpha^{(i)} = \alpha^{(i)}_0 + (\alpha^{(i)}) ^{\perp},
$$
where
$$
\alpha^{(i)}_0= \int_{\z^{(i)} / \Lambda^{(i)} } \alpha \circ \Phi_{\bf t}^{\z^{(i)}}   \diff {\bf t} \in H_0(\z^{(i)})$$
is a strictly positive, $\z^{(i)}$-invariant function, and 
$$
(\alpha^{(i)})^{\perp} = \alpha^{(i)}  - \alpha^{(i)}_0  \in H_0(\z^{(i)})^{\perp}.
$$
Since $\alpha^{(i)}_0$ is constant under the action of the torus $\z^{(i)} / \Lambda^{(i)} $, that is, along the fibers of the fibration $ \pi^{(i+1)}: M^{(i)} \to M^{(i+1)}$,  there
exists a function $\overline{\alpha}^{(i)}_0$ such that $\alpha^{(i)}_0 = \overline{\alpha}^{(i)}_0 \circ \pi^{(i+1)}$ on $M^{(i)}$. 

We now set $\alpha^{(i+1)}:= \overline{\alpha}^{(i)}_0$. 
By definition of $\mathscr{P}_{\mathcal{T}}$  and  $\mathscr{P}^{(i)}$ (see Definition~\ref{def:trig_pol_tower}),  $\alpha^{(i+1)} \in \mathscr{P}^{(i+1)}$, thus we can continue the inductive definition until $i+1=h$. 

For each $0 \leq i \leq h$, let  $V^{(i)}$ denote the time-change of the vector field $X^{(i)}$ on $M^{(i)}$ given by
$$
V^{(i)}=\frac{1}{\alpha^{(i)} } X^{(i)}.
$$

\smallskip
In the final proof, for some $0\leq i\leq h$,  we will need to  consider two cases:  
\begin{itemize}
 \item[(i)]  {\it the function $(\alpha^{(i)})^{\perp}$ is a measurable coboundary w.r.t.~$X^{(i)}$;}

\noindent In this case (treated in \S\ref{sec:coboundary_case}), we will show that if the flow generated by $V^{(i+1)}$ on $M^{(i+1)}$ is mixing, then also the flow generated by $V^{(i)}$ on $M^{(i)}$ is mixing (see Proposition~\ref{coboundary_case} in \S\ref{sec:coboundary_case}) 
 
 \item[(ii)] {\it the function $(\alpha^{(i)})^{\perp}$ is \emph{not} a measurable coboundary w.r.t.~$X^{(i)}$}; 
 
 \noindent In this  case (treated in \S\ref{sec:noncoboundary_case}) we will show directly (using the growth of Birkhoff sums proved in \S\ref{sec:growth}) that the flow generated by $V^{(i)}$ on $M^{(i)}$ is mixing (see Proposition~\ref{noncoboundary_case} in \S\ref{sec:noncoboundary_case}).  
\end{itemize}

The argument will proceed by (finite) reverse induction, with the initial step for the smallest index $i_0<h$ such that $(\alpha^{(i_0)})^{\perp}$ is not a measurable coboundary 
w.r.t.~$X^{(i_0)}$, given by case $\rm{(ii)} $, and the remaining induction steps given by case $\rm (i)$ above.

\smallskip
\noindent 
\textbf{Notation.}
In order to keep the notation simpler, in the following two sections we will drop the index $i$, hence we will simply write $M = \Gamma \backslash G$ for a  nilmanifold, $X$ for the generator of a uniquely ergodic nilflow, $\{ X, Y, Z\}$ for a Heisenberg triple, and  $\alpha\in\mathscr{C}^{\infty}(M)$ for a positive, real-valued trigonometric polynomial with respect to $\z:=[Z]_{\Gamma}$, that is the smallest $\Gamma$-rational subspace containing~$Z$.
 The Propositions proved in the next two sections will then be applied to $M^{(i)}$, $X^{(i)}$ and the time-change $V^{(i)} = \frac{1}{\alpha^{(i)}} X^{(i)}$  given by $\alpha^{(i)}$ defined above.

\subsection{Case (i) (coboundary case)}\label{sec:coboundary_case}

In this section we consider the case in which the function $
\alpha^{\perp}= \alpha - \alpha_0 \in H_0([Z]_{\Gamma})^{\perp}
$
is  a measurable coboundary w.r.t.~$X$. In this case (thanks to Lemma~\ref{lemma:time_change_commute}), we will show that the flow $\phi^V$ projects on a flow on the quotient manifold $\overline{M}= M /\exp [Z]_\Gamma = M/H $ and that if the projected flow is mixing, also the original flow was mixing (see Proposition~\ref{coboundary_case}). This is the case in which to prove mixing we will exploit the intrinsic dynamics of nilflows and especially the shearing mechanism of \emph{wrapping in the fibers}.

\smallskip
We first want to show that it is possible to define a flow on the quotient nilmanifold $\overline{M}$. 
By the standard theory of time-changes, see for instance Lemma~\ref{lemma:triviality},  in case $\rm (i)$ the time-change is measurably conjugate to a time-change with 
$[Z]_{\Gamma}$-invariant time-change function. We can therefore assume that the function $\alpha$ is $[Z]_{\Gamma}$-invariant (hence $\alpha^\perp=0$ and $\alpha= \alpha_0$).

\smallskip 
Let then $\overline{\mathfrak g} = \mathfrak g/ [Z]_\Gamma$, $\overline{G} = G / \exp [Z]_\Gamma$ and let 
$\overline{M}= M /\exp [Z]_\Gamma$. 
 
Since the action $(\Phi^{[Z]_\Gamma}_{\bf t})_{{\bf t} \in \R^d}$ induces a non-singular toral action, the quotient 
$\overline{M}= \overline{\Gamma} \backslash \overline{G} $ is a nilmanifold, and $M$ is a toral bundle $\pi \colon M \to \overline{M}$ over $\overline{M}$.  By Lemma~\ref{lemma:time_change_commute} the time-change $\phi^V$ projects to a time-change 
$\phi^{\overline{V}}$ of a nilflow $\phi^{\overline{X}}$ on $\overline{M}$. 
We remark that the invariant measure for $\phi^{\overline{V}}$ is $\pi_{\ast} (\alpha \diff \mu) = {\overline{ \alpha}} \diff {\overline{\mu}}$, where $ \diff {\overline{\mu}}$ is the Haar measure on 
$\overline{M}$. We will prove the following relation between  $\phi^{V}$ on $ M$ and  $\phi^{\overline{V}}$ on $\overline{M}$.

 \begin{proposition}\label{coboundary_case}
If the projected flow $\{\phi^{\overline{V}}_t \}_{t \in \R}$ on $\overline{M}$ generated by ${\overline{V}} = (\overline{\alpha})^{-1}  {\overline{X}}$ is mixing (w.r.t.~${\overline{\alpha}} \diff {\overline{\mu}}$), then the flow $\{ \phi^{V}_t \}_{t \in \R}$ on $ M$ is mixing (w.r.t.~$\alpha \diff \mu$).  
\end{proposition}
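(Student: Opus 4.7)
The plan is to reduce mixing of $\phi^V$ on $(M, \alpha \diff\mu)$ to a decay-of-correlation statement on a dense family of observables, and then to treat separately the fiber-invariant and fiber-oscillating components of those observables. By Corollary \ref{cor:density} (applied at the current level of the tower) together with the orthogonal decomposition \eqref{eq:orthogonal}, it suffices to take $f, g$ to be trigonometric polynomials with respect to $[Z]_\Gamma$ and split $f = f_0 + f^\perp$, $g = g_0 + g^\perp$ with $f_0, g_0 \in H_0([Z]_\Gamma)$ (pullbacks from $\overline M$) and the remainders in $H_0([Z]_\Gamma)^\perp$. Since $\alpha$ is $[Z]_\Gamma$-invariant, this splitting stays orthogonal under the measure $\alpha\diff\mu$, and by Lemma \ref{lemma:time_change_commute} the flow $\phi^V$ commutes with $\Phi^{[Z]_\Gamma}$ and hence preserves both subspaces. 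The cross-correlations therefore vanish, leaving two independent cases.

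For the $H_0$--$H_0$ term, the intertwining $\pi \circ \phi^V_t = \phi^{\overline V}_t \circ \pi$ and $\pi_\ast(\alpha \diff\mu) = \overline\alpha \diff\overline\mu$ give
$$
\langle f_0 \circ \phi^V_t,\, g_0\rangle_{L^2(M, \alpha \diff\mu)} = \langle \overline{f_0} \circ \phi^{\overline V}_t,\, \overline{g_0}\rangle_{L^2(\overline M, \overline\alpha \diff\overline\mu)},
$$
which converges to the product of the means by the assumed mixing of $\phi^{\overline V}$.

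The main work is the $H_0^\perp$--$H_0^\perp$ term. The plan here is to invoke Lemma \ref{lemma:correlationseasy} with shearing field $W = Y$, where $(X, Y, Z)$ is the Heisenberg triple at the current level. The task is to show that for each trigonometric polynomial $f \in H_0([Z]_\Gamma)^\perp$ and each $s$ in a suitable interval, the integral $I(x,t) := \int_0^s f \circ \phi^V_t \circ \phi^Y_r(x)\,\diff r$ tends to zero in measure as $t \to \infty$. Since $\alpha$ is $[Z]_\Gamma$-invariant, $Z\alpha = 0$, and Lemma \ref{corollary:push_forward} specializes to $(\phi^V_t)_\ast Y = Y - B_t Z - A_t V$ with $B_t(x) = \int_0^t \alpha^{-1}\circ \phi^V_\tau(x)\,\diff\tau$ and $A_t(x) = \int_0^t (Y\alpha/\alpha)\circ \phi^V_\tau(x)\,\diff\tau$. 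Unique ergodicity of $\phi^V$ yields $B_t(x)/t \to 1$ and $A_t(x)/t \to 0$ uniformly in $x$ (using $\int \alpha^{-1}\alpha \diff\mu = 1$ and $\int Y\alpha \diff\mu = 0$), so the pushed curve $r \mapsto \phi^V_t\circ\phi^Y_r(x)$ wraps around the central toral fiber at rate $\sim t$ while its horizontal projection varies slowly. Mimicking the construction in \S\ref{sec:decoupling}, I would introduce the horizontal reparametrization $\phi^V_t \circ \phi^Y_r(x) = \phi^Z_{-R(r)} \circ \gamma_{x,t}(r)$ with $R'(r) = B_t(\gamma_{x,t}(r))$; decomposing $f = \sum_{\mathbf{v} \neq 0} f_{\mathbf{v}}$ in fiber Fourier modes and using $f_{\mathbf{v}} \circ \phi^Z_{-R} = e^{-2\pi\imath R \langle \mathbf{v}, Z\rangle} f_{\mathbf{v}}$ then rewrites $I(x,t)$ as a finite sum of oscillatory integrals with phase rate $\sim t|\langle \mathbf{v}, Z\rangle|$, which is nonzero for $\mathbf{v}\neq 0$ by the minimality of $[Z]_\Gamma$. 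Integration by parts then gives $|I(x,t)| = O(1/t)$, strictly stronger than convergence in measure.

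The main obstacle will be the uniform $\mathscr{C}^1$-control of the slow factor $r \mapsto f_{\mathbf{v}}(\gamma_{x,t}(r))$ after reparametrization: the tangent to $\gamma_{x,t}$ has a $V$-component of size $A_t(x)$ that is $o(t)$ but not uniformly $O(1)$. Dividing by $R'(r) \sim t$ during the integration by parts reduces this contribution to a factor that is $o(1)$ uniformly in $x$, which is sufficient. If uniform control should fail on a set that is not of small measure, the more flexible criterion of Lemma \ref{lemma:correlations}, which allows the shearing estimate to hold only for $s$ in a restricted interval depending on the target accuracy, is tailor-made to accommodate such exceptional sets and still suffices to close the argument.
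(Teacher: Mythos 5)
Your decomposition into $H_0([Z]_\Gamma)\oplus H_0([Z]_\Gamma)^\perp$, the treatment of the $H_0$ component via the assumed mixing of the quotient flow, and the appeal to Lemma~\ref{lemma:correlationseasy} with $W=Y$ all match the paper. The only genuine divergence is in how you prove pointwise decay of $\int_0^s f\circ\phi^V_t\circ\phi^Y_r(x)\,\diff r$ for $f\in H_0([Z]_\Gamma)^\perp$. The paper stays intrinsic: it weights by $B_t$, integrates by parts in $s$, uses that $\mathscr L_Z$ is invertible on $H_0^\perp$ to write $f=Zg$, and reduces $\int_\gamma f\hat Z$ to $\int_\gamma\diff g-\int_\gamma Yg\,\hat Y-\int_\gamma Vg\,\hat V$, with Stokes' theorem bounding the first term by $2\|g\|_\infty$ and $A_t=o(t)$ controlling the last. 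Your version --- Fourier-decompose along the central torus, horizontally reparametrize, then treat each mode as an oscillatory integral in $r$ --- is the same integration by parts in coordinates: inverting $\mathscr L_Z$ mode-by-mode is exactly division by $2\pi\imath\langle{\bf v},Z\rangle$, and the boundary term of your integration by parts is the paper's $\int_\gamma\diff g$; the paper's formulation avoids introducing the ODE for $R(r)$ and is somewhat cleaner. One inaccuracy to fix: the horizontality ODE must read $R'(r)=B_t(\phi^Y_r(x))$, equivalently $B_t\circ\phi^Z_{R(r)}\circ\phi^Y_r(x)$ as in \S\ref{sec:decoupling}; your $B_t(\gamma_{x,t}(r))$ carries an extraneous $\phi^V_t$, and since $B_t\circ\phi^V_t=B_{2t}-B_t\neq B_t$, this choice would not annihilate the $Z$-component of the pushed tangent vector. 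Finally, you correctly observe that the $A_t$ contribution yields only $o(1)$ uniformly in $x$ rather than $O(1/t)$ --- this matches the paper's conclusion --- so the hedge about falling back to Lemma~\ref{lemma:correlations} is unnecessary: uniform pointwise convergence is exactly the hypothesis of Lemma~\ref{lemma:correlationseasy}, and that is what the paper uses.
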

\smallskip

\begin{proof}
We consider correlations for the flow $\phi^V$.  Let $\omega_V$ the $V$-invariant volume form. It follows from the definition that $\omega_V =   \alpha  \diff \mu$.  We have
$$
\langle f\circ \phi^V_t , g \rangle_{L^2(M,\omega_V)}  = \langle f\circ \phi^V_t , {\alpha}{g} \rangle_{L^2(M,\diff \mu)}\,,
$$
hence it is equivalent to control correlations with respect to the Haar measure $\mu$. 

As above, the Hilbert space $L^2(M,\diff\mu)$ decomposes as an orthogonal sum 
$$
L^2(M,\diff\mu) = H_0([Z]_\Gamma) \oplus^\perp H_0([Z]_\Gamma)^\perp \,,  \quad  \text{ with } \quad  H_0([Z]_\Gamma) 
=  \pi^* (L^2(\overline{M},\diff{\overline{\mu}})) \,. 
$$
Let $f =\pi^*(\overline{f}) \in  \pi^* (L^2(\overline{M},\diff {\overline{\mu}})) $,  $g \in L^2(M,\diff\mu)$  and let 
$\pi^*(\overline{g})$ with $\overline{g}\in L^2(\overline{M},\diff {\overline{\mu}})$ denote the orthogonal component of 
$g$ in $\pi^* (L^2(\overline{M},\diff {\overline{\mu}}))$. We have
$$
\begin{aligned}
\langle f\circ \phi^V_t, g \rangle_{L^2(M, \diff \mu)} &=  \langle \pi^* (\overline{f} \circ \phi^{\overline{V}}_t) , g \rangle_{L^2(M, \diff \mu)} \\ &=  \langle \pi^* (\overline{f} \circ \phi^{\overline{V}}_t) , \pi^*(\overline{g}) \rangle_{L^2(M, \diff \mu)}  
=  \langle \overline{f} \circ \phi^{\overline{V}}_t , \overline{g} \rangle_{L^2(\overline{M}, \diff {\overline{\mu}})} \,.
\end{aligned}
$$
By assumption, the latter term above has the correct asymptotics, that is
$$
\langle \overline{f} \circ \phi^{\overline{V}}_t , \overline{g} \rangle_{L^2(\overline{M}, \diff {\overline{\mu}})} \to \left( \int_{\overline{M}} {\overline{f}}  \overline{\alpha} \diff {\overline{\mu}} \right) \cdot \left( \int_{\overline{M}} {\overline{g}} \diff {\overline{\mu}} \right) = \left( \int_{M} {f} \alpha \diff {\mu} \right) \cdot \left( \int_{M} {g} \diff {\mu} \right).
$$ 
It remains to prove that whenever $f\in H_0([Z]_\Gamma)^\perp = [ \pi^* (L^2(\overline{M},\diff {\overline{\mu}})) ]^\perp $ we have
$$
\langle f\circ \phi^V_t, g \rangle_{L^2(M, \diff \mu)}   \to 0\,.
$$ 
%     Case (i): end of the proof     %   
In the present case we choose $W=Y$ in Lemma~\ref{lemma:correlationseasy} and, for every $\sigma>0$, we consider the pushed curves 
$$
\gamma^\sigma_{x,t} (s)  =  \phi^V_t\circ \phi^Y_s (x) \,, \quad \text{ for } s\in [0, \sigma]\,.
$$
For all $t\in \R$, let $A_t$ and $B_t$ denote the functions
$$
A_t(x) := -\int_0^t   \frac{Y\alpha}{\alpha} \circ \phi^V_\tau (x) \diff \tau  \quad \text{ and } \quad  B_t(x):=  \int_0^t   \frac{1}{\alpha} \circ \phi^V_\tau (x) \diff \tau\,.
$$
By Lemma~\ref{corollary:push_forward}, by taking into account that $Z\alpha =0$, we have
\begin{equation}
\label{eq:gamma_Y}
\frac{d\gamma^\sigma_{x,t}}{ds}  (s) = [A_t \circ \phi^Y_s(x)] V(\gamma^\sigma_{x,t}(s)) +   Y  (\gamma^\sigma_{x,t}(s)) - [ B_t \circ \phi^Y_s(x)] Z(\gamma^\sigma_{x,t}(s)) \,.
\end{equation}
Let $a >0$ denote the minimum of $1/\alpha$ on $M$. Then, for all $t \in \R$ and for all $x \in M$, we have $B_t(x)>at$ and, by the ergodic theorem (and unique ergodicity of completely irrational nilflows), 
$$
\lim_{t\to +\infty} \frac{B_t(x)}{t} = \int_M  \frac{1}{\alpha} \omega_V = 1 \text{\ \ \ and\ \ \ } \lim_{t\to +\infty} \frac{A_t(x)}{t} = -\int_M  \frac{Y\alpha}{\alpha} \omega_V = -\int_M Y \alpha \diff \mu =0,
$$
uniformly in $x \in M$.
We write
$$
\int_{\gamma^\sigma_{\cdot,t}} f = \int_0^{\sigma}  f\circ \phi^V_t\circ \phi^Y_s  \diff s =   \int_0^{\sigma} (B_t\circ \phi^Y_s)^{-1}      (B_t \circ \phi^Y_s)  f\circ \phi^V_t\circ \phi^Y_s  \diff s
$$
and after integration by parts
\begin{equation}
\label{eq:induction_case_1}
\begin{split}
\int_0^{\sigma}  f\circ \phi^V_t\circ \phi^Y_s  \diff s  = & (B_t\circ \phi^Y_{\sigma})^{-1}  \int_0^{\sigma}  (B_t \circ \phi^Y_s)  f\circ \phi^V_t\circ \phi^Y_s  \diff s \\ &- \int_0^{\sigma} \left(\frac{\diff}{\diff s} (B_t\circ \phi^Y_s)^{-1}\right) 
\left( \int_0^s  (B_t \circ \phi^Y_r)  f\circ \phi^V_t\circ \phi^Y_r  \diff r \right) \diff s \,.
\end{split}
\end{equation}
By  Lemma~\ref{lemma:correlationseasy} for $W=Y$, it is enough to prove that the two terms on the right-hand side of \eqref{eq:induction_case_1} converge pointwise to zero for any $f\in H_0([Z]_\Gamma)^\perp  $.

\smallskip
\noindent \emph{First term of the RHS of \eqref{eq:induction_case_1}.}
By \eqref{eq:gamma_Y}, we can write the second integral in the RHS of  \eqref{eq:induction_case_1} in terms of a path integral along $\gamma_{x,t}^{\sigma}$, that is,
$$
\int_0^{\sigma}  (B_t \circ \phi^Y_s)  f\circ \phi^V_t\circ \phi^Y_s  \diff s = \int_{\gamma_{x,t}^{\sigma}}  f \hat Z \,.
$$
Let us notice that the Lie derivative operator $\mathscr{L}_Z$ is invertible on $H_0([Z]_\Gamma)^\perp$; we can therefore write $f=Zg$.
%In our case we have that $f = Zg$, in fact $f\in H_0([Z]_\Gamma)^\perp$. 
It is not restrictive to assume that $g$ is smooth.
Let $\{\hat V, \hat Y, \hat Z\}$ be a frame of $1$-forms dual to $\{V, Y, Z\}$. 
By \eqref{eq:gamma_Y}, we have
$$
\int_{\gamma_{x,t}^{\sigma}} Z g \hat Z = \int_{\gamma_{x,t}^{\sigma}} \diff g - \int_{\gamma_{x,t}^{\sigma}} Y g \hat{Y} - \int_{\gamma_{x,t}^{\sigma}} V g \hat{V};
$$
hence, we derive the formula
$$
\begin{aligned}
\int_0^{\sigma}  (B_t \circ \phi^Y_s)  f\circ \phi^V_t\circ \phi^Y_s  \diff s = & \int_{\gamma_{x,t}^{\sigma}} \diff g -  \int_0^{\sigma}  Yg \circ \phi^V_t\circ \phi^Y_s  \diff s \\ & - 
\int_0^{\sigma}  (A_t \circ \phi^Y_s) Vg \circ \phi^V_t\circ \phi^Y_s  \diff s  \,.
\end{aligned}
$$
By Stokes Theorem,
$$
\left\lvert \int_0^{\sigma}  (B_t \circ \phi^Y_s)  f\circ \phi^V_t\circ \phi^Y_s  \diff s \right\rvert \leq 2 \norm{g}_{\infty} + \sigma \norm{Yg}_{\infty} + \norm{Vg}_{\infty} \int_0^{\sigma} |A_t \circ \phi^Y_s| \diff s,
$$
thus, we can bound the first term on the right-hand side of \eqref{eq:induction_case_1} by
$$
\begin{aligned}
& \left\lvert (B_t\circ \phi^Y_{\sigma})^{-1}  \int_0^{\sigma}  (B_t \circ \phi^Y_s)  f\circ \phi^V_t\circ \phi^Y_s  \diff s \right\rvert \\
& \quad \leq \frac{2 \norm{g}_{\infty} + \sigma \norm{Yg}_{\infty}}{at} + \frac{\norm{Vg}_{\infty}}{a} \int_0^{\sigma} \frac{|A_t \circ \phi^Y_s|}{t} \diff s.
\end{aligned}
$$
Since $|A_t \circ \phi^Y_s|/t$ converges to zero uniformly in $M$, we deduce that 
\begin{equation}
\label{eq:induction_1_first_term}
\lim_{t \to \infty}  (B_t\circ \phi^Y_{\sigma})^{-1}  \int_0^{\sigma}  (B_t \circ \phi^Y_s)  f\circ \phi^V_t\circ \phi^Y_s  \diff s =0.
\end{equation}

\smallskip
\noindent \emph{Second term of the RHS of \eqref{eq:induction_case_1}.}
We can rewrite the second term as 
$$
\begin{aligned}
& - \int_0^{\sigma} \left(\frac{\diff}{\diff s} (B_t\circ \phi^Y_s)^{-1}\right) \left( \int_0^s  (B_t \circ \phi^Y_r)  f\circ \phi^V_t\circ \phi^Y_r  \diff r \right) \diff s \\
& \quad \qquad \qquad \qquad \qquad =  \int_0^{\sigma} \frac{ \frac{\diff}{\diff s} B_t\circ \phi^Y_s}{ (B_t\circ \phi^Y_s)^2}\left( \int_0^s  (B_t \circ \phi^Y_r)  f\circ \phi^V_t\circ \phi^Y_r  \diff r \right) \diff s. 
\end{aligned}
$$
By definition of $B_t$, we have
$$
\frac{\diff}{\diff s} B_t\circ \phi^Y_s = \int_0^t \frac{\diff}{\diff s} \left( \frac{1}{\alpha} \circ \phi^V_{\tau} \circ \phi^Y_s \right) \diff \tau = \int_0^t [(\phi^V_{\tau})_{\ast}(Y)] \frac{1}{\alpha} \circ \phi^V_{\tau} \circ \phi^Y_s \diff \tau,
$$
and by Lemma~\ref{corollary:push_forward}, since $Z\alpha =0$, we get 
$$
\frac{\diff}{\diff s} B_t\circ \phi^Y_s = \int_0^t   ( A_\tau \circ \phi^Y_s )(  V \frac{1}{\alpha} \circ \phi^V_\tau \circ \phi^Y_s )  \diff \tau  + \int_0^t   Y\frac{1}{\alpha} \circ \phi^V_\tau \circ \phi^Y_s  \diff \tau.  
$$
The term
$$ 
\int_0^t    Y \frac{1}{\alpha}   \circ \phi^V_\tau \circ \phi^Y_s \diff \tau 
$$
clearly grows at most linearly with time. 
Integration by parts gives 
\begin{align*}
\int_0^t   A_\tau  \left( V \frac{1}{\alpha}   \circ \phi^V_\tau  \right)  \diff \tau &=  A_t   \int_0^t   V \frac{1}{\alpha}  \circ \phi^V_\tau  \diff \tau - \int_0^t \frac{\diff A_\tau}{\diff \tau}  \left( \int_0^\tau  V\frac{1}{\alpha}  \circ \phi^V_s \diff s \right)\diff \tau\\
&=  A_\tau  \left(\frac{1}{\alpha}    \circ \phi^V_\tau  -\frac{1}{\alpha} \right)  \diff \tau-   \int_0^t \left( - \frac{Y\alpha}{\alpha} \circ \phi^V_\tau  \right)  \left(\frac{1}{\alpha}  \circ \phi^V_\tau -\frac{1}{\alpha} \right)\diff \tau \\
&=\frac{A_t}{\alpha \circ \phi^V_t } - \frac{A_t}{\alpha} +  \int_0^t \left( \frac{Y\alpha}{\alpha^2}  \circ \phi^V_\tau \right) \diff \tau - \frac{1}{\alpha} \int_0^t \left( \frac{Y\alpha}{\alpha}  \circ \phi^V_\tau \right) \diff \tau
\\
&=\frac{A_t}{\alpha  \circ \phi^V_t} - \frac{A_t}{\alpha} -  \int_0^t Y\frac{1}{\alpha}  \circ \phi^V_\tau  d\tau +\frac{A_t}{\alpha}  =\frac{A_t}{\alpha \circ \phi^V_t } -  \int_0^t Y\left(\frac{1}{\alpha}\right)  \circ \phi^V_\tau  d\tau .
\end{align*}
Hence, it follows from the definition of $A_t$ that there exists a constant $C>1$ such that for all $x \in M$,
$$
\left\lvert \frac{\diff}{\diff s} B_t\circ \phi^Y_s(x) \right\rvert \leq C t.
$$
Therefore, 
\begin{equation*}
 \left\lvert \frac{ \frac{\diff}{\diff s} B_t\circ \phi^Y_s}{ (B_t\circ \phi^Y_s)^2}\left( \int_0^s  (B_t \circ \phi^Y_r)  f\circ \phi^V_t\circ \phi^Y_r  \diff r \right) \right\rvert %\\
 \leq \frac{C}{a} \left\lvert \frac{1}{B_t\circ \phi^Y_s} \int_0^s  (B_t \circ \phi^Y_r)  f\circ \phi^V_t\circ \phi^Y_r  \diff r \right\rvert,
\end{equation*}
which converges to zero by \eqref{eq:induction_1_first_term}.
This implies that the second term on the right-hand side of \eqref{eq:induction_case_1} converges to zero as well. We conclude that 
$$
\int_0^{\sigma}  f\circ \phi^V_t\circ \phi^Y_s (x) \diff s \to 0
$$ 
pointwise, hence the proof is complete by an application of Lemma~\ref{lemma:correlationseasy} and Remark~\ref{rk:mixingconclusion}.
\end{proof}

\subsection{Case (ii) (non-coboundary case)}\label{sec:noncoboundary_case}

In this subsection we consider the complementary case in which $
\alpha^{\perp}(x) = \alpha(x) - \overline{\alpha} \in H_0([Z]_{\Gamma})^{\perp}$
is  \emph{not} a  measurable coboundary w.r.t.~$X$. In this case we will show directly that the flow $\phi^V$ on $M$ is mixing, by proving % (see Proposition \ref{noncoboundary_case}).
 the Proposition~\ref{noncoboundary_case} below. It is for this case that, to prove mixing, we will exploit the shearing of curves produced by the growth of Birkhoff sums for non-coboundaries proved in \S\ref{sec:growth}.

The notation in the following proposition is the one fixed at the end of \S\ref{sec:decomposition}.
 \begin{proposition}\label{noncoboundary_case}
If $\alpha^{\perp}$ is not a measurable coboundary with respect to $X$, the flow $\{ \phi_t^{ V} \}_{t \in \R}$ given by $V=\frac{1}{\alpha}X$ on $ M$ is mixing. 
\end{proposition}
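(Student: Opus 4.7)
The plan is to apply the mixing-via-shearing criterion Lemma~\ref{lemma:correlations} with $W = Z$, the central element of the Heisenberg triple $(X,Y,Z)$. By Lemma~\ref{corollary:push_forward}, the pushed curve $c_{x,t}(r) := \phi^V_t\circ\phi^Z_r(x)$ satisfies
\[
\dot c_{x,t}(r) = Z(c_{x,t}(r)) - A_t(\phi^Z_r(x))\, V(c_{x,t}(r)), \qquad A_t(x) := \int_0^t \frac{Z\alpha}{\alpha}\circ\phi^V_\tau(x)\, d\tau.
\]
The heart of the argument is to show that $|A_t|\to\infty$ in measure, which produces strong shearing of $\phi^Z$-curves in the $V$-direction. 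Combined with integration by parts along $c_{x,t}$, this will force the integrals $\int_0^s f\circ \phi^V_t\circ\phi^Z_r \, dr$ appearing in Lemma~\ref{lemma:correlations} to converge to $0$ in measure, in a spirit analogous to the proof of Proposition~\ref{coboundary_case}.

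To control $A_t$, I would introduce the auxiliary flow $\phi^U$ with $U = X/\alpha_0$: since $\alpha_0$ is positive and $[Z]_\Gamma$-invariant, Lemma~\ref{lemma:time_change_commute} ensures $\phi^U$ commutes with $\phi^Z$. Writing $V = (\alpha_0/\alpha)\, U$ and changing variable via the reparametrization $\phi^V_\tau = \phi^U_{T(\tau,x)}$, where $T(\tau,x) = \int_0^\tau (\alpha_0/\alpha)\circ\phi^V_s(x)\, ds$, and using $Z\alpha_0 = 0$, one finds
\[
A_t(x) = \int_0^{T(t,x)} \frac{Z\alpha^\perp}{\alpha_0}\circ\phi^U_u(x)\, du = F_{T(t,x)}(x),
\]
where $F_T$ denotes the $\phi^U$-ergodic integral of $h := Z\alpha^\perp/\alpha_0$. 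The function $h$ is a trigonometric polynomial with respect to $[Z]_\Gamma$, supported on the same finite set of Fourier modes as $\alpha^\perp$, and lies in $H_0([Z]_\Gamma)^\perp$. I would then verify by contraposition that $h$ is not a measurable coboundary for $\phi^U$: if $h = Uk$, then $Xk = \alpha_0 Uk = Z\alpha^\perp$; since $Z\in Z(\mathfrak g)$, the operator $X$ preserves the Fourier decomposition $L^2(M) = \bigoplus_{\bf v} H_{\bf v}([Z]_\Gamma)$, and ergodicity of $\phi^X$ forces the $[Z]_\Gamma$-invariant part of $k$ to be constant, so $k$ may be assumed to lie in $H_0([Z]_\Gamma)^\perp$ with the same finite Fourier support as $\alpha^\perp$. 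Since $Z$ acts as multiplication by the nonzero scalar $2\pi i\langle{\bf v}, Z\rangle$ on each $H_{\bf v}([Z]_\Gamma)$ with ${\bf v}\neq 0$, the function $g := Z^{-1}k$ is well-defined and measurable, and $Xg = Z^{-1}Xk = \alpha^\perp$ contradicts the hypothesis. Theorem~\ref{thm:stretch_ergodic_integral} then gives $\mu\{|F_T|<C\}\to 0$ as $T\to\infty$; combined with the uniform convergence $T(t,x)/t \to 1$ (from unique ergodicity of $\phi^V$) and a transfer argument exploiting the Lipschitz dependence of $F_T$ on $T$, one concludes $\mu\{|A_t|<C\}\to 0$ for every $C>0$.

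To close the mixing argument, I would work with $f = Vg$ for $g \in \mathscr{C}^\infty(M)$ of zero mean, a dense class. The tangent identity yields
\[
Vg\circ c_{x,t}(r) = \frac{Zg\circ c_{x,t}(r) - \frac{d}{dr}(g\circ c_{x,t})(r)}{A_t(\phi^Z_r(x))}
\]
wherever $A_t\circ\phi^Z_r \ne 0$. Integrating over $r\in[0,s]$ and integrating by parts in the derivative term produces an expression controlled, up to boundary terms of order $\|g\|_\infty/|A_t|$, by quantities of order $1/|A_t(\phi^Z_r(x))|$ on the set where the denominator is large. The main obstacle is fiberwise uniformity in $r$: one would like $|A_t(\phi^Z_r(x))| \geq C_0$ for \emph{every} $r\in[0,s]$, whereas the shearing estimate only provides pointwise control of $A_t$ on $M$, and a priori $ZA_t$ can grow faster than $A_t$ itself. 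This is precisely why the more flexible form of Lemma~\ref{lemma:correlations} is needed: by choosing $\sigma$ small (depending on $\delta$) after $\delta$, and $T$ after $\eta$, Fubini on $[0,\sigma]\times M$ with respect to the $\phi^Z$-invariant measure $\mu$ trades pointwise control of $A_t\circ\phi^Z_r$ for an integral estimate, while exceptional parameters $r$ are absorbed by trivial $L^\infty$ bounds on the integrand. Combined with density of $\{Vg : g\in\mathscr{C}^\infty(M)\}$ and Remark~\ref{rk:mixingconclusion}, this yields mixing of $\phi^V$.
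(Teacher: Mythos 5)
Your high-level strategy matches the paper's: take $W=Z$ in the mixing-via-shearing criterion, compute the pushforward coefficient $A_t=\int_0^t (Z\alpha/\alpha)\circ\phi^V_\tau\,d\tau$ (the paper's $D_t=-A_t$), reduce the study of $A_t$ to ergodic integrals of the auxiliary uniquely ergodic flow $\phi^U$ with $U=X/\alpha_0$ via a change of variable, and invoke Theorem~\ref{thm:stretch_ergodic_integral}. Your identity $A_t(x)=F_{T(t,x)}(x)$ is correct, and your recognition that the finer Lemma~\ref{lemma:correlations} (rather than Lemma~\ref{lemma:correlationseasy}) is needed to handle the fiberwise non-uniformity of $A_t\circ\phi^Z_r$ is on the mark. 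However, there are two genuine gaps.

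\textbf{Gap 1: the transfer from $F_T$ to $F_{T(t,x)}$.} Theorem~\ref{thm:stretch_ergodic_integral} gives $\mu\{|F_T|<C\}\to 0$ as $T\to\infty$ for the deterministic time $T$, but you need $\mu\{|F_{T(t,x)}(x)|<C\}\to 0$ as $t\to\infty$ with a random time. The ``Lipschitz dependence'' argument you sketch does not close: one has $|F_{T(t,x)}(x)-F_t(x)|=|F_{T(t,x)-t}(\phi^U_t x)|\le \|h\|_\infty\,|T(t,x)-t|$, and unique ergodicity of $\phi^V$ only gives $|T(t,x)-t|=o(t)$ uniformly, which is not $O(1)$ and can dwarf both $C$ and $|F_t|$ itself, since $F_t$ grows sublinearly in measure. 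The paper's proof (Lemmas~\ref{lemma:sumD_t_grows}--\ref{lemma:distortion}) is precisely a bootstrap designed to solve this: assuming one is on the bad set $\hat{\mathcal B}_t(C)$ where $\sum_{\bf v}|D^{\bf v}_t|\le C$, the estimates~\eqref{eq:Wtau_bound},~\eqref{eq:D-tau},~\eqref{eq:D-tau_bis} show that $|\tilde\tau(x,t)-\bar\tau(x,t)|$ (hence $|T(t,x)-t|$) is actually $O(C)$ there---a much stronger a priori bound than $o(t)$---which makes the comparison with $\int_0^t(\alpha^\perp/\bar\alpha)\circ\phi^U_r\,dr$ and Theorem~\ref{thm:stretch_ergodic_integral} go through. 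This bootstrap, with the auxiliary comparison $\tilde\tau\approx\tilde\tau_Z\approx\bar\tau$ in the Fourier-component framework, is essential and is missing from your proposal.

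\textbf{Gap 2: the non-coboundary claim for $h=Z\alpha^\perp/\alpha_0$.} You argue that $h=Uk$ would give $Xk=Z\alpha^\perp$, then invert $Z$ on the Fourier modes of $k$ to produce $g$ with $Xg=\alpha^\perp$. But $k$ is only a measurable transfer function; the Fourier decomposition $L^2(M)=\bigoplus_{\bf v}H_{\bf v}([Z]_\Gamma)$ is an $L^2$ notion, and one cannot assume a priori that a measurable $k$ has well-defined Fourier components $k_{\bf v}$, let alone that its support is the same finite set as $\alpha^\perp$. The operator $Z^{-1}$ is unbounded on $H_0([Z]_\Gamma)^\perp$ when $[Z]_\Gamma$ is higher-dimensional (the eigenvalues $2\pi i\langle{\bf v},Z\rangle$ accumulate at $0$ as $|{\bf v}|\to\infty$), so the step needs justification. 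The paper avoids this entirely: it applies Theorem~\ref{thm:stretch_ergodic_integral} directly to $f=\alpha^\perp/\bar\alpha$, where the non-coboundary assertion is the trivially equivalent reformulation of the hypothesis ($Uk=\alpha^\perp/\bar\alpha\iff Xk=\alpha^\perp$). The growth of $D_t$ (which involves $Z\alpha^\perp$, not $\alpha^\perp$) is then recovered from the growth of $\int_0^{\bar\tau(x,t)}\alpha^\perp$ via the same bootstrap as above, not via inverting $Z$.

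A further, more minor divergence is in the final mixing step: you divide by $A_t(\phi^Z_r(x))$, which requires separate handling of the zero set of $A_t$; the paper works instead with the smooth partition $1=\frac{1}{1+D_t^2}+\frac{D_t^2}{1+D_t^2}$, which avoids the singularity and makes the distortion estimate of Lemma~\ref{lemma:distortion} apply cleanly. Your acknowledgment that $ZA_t$ must be controlled relative to $A_t$ is the right idea, but the precise form used in the paper is the sublevel-set estimate of Lemma~\ref{thm:sublevel_sets_polynomials} applied to the Fourier components $D^{\bf v}_t$ (Lemma~\ref{lemma:D_t_comparison}), which again belongs to the bootstrap machinery.

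In summary: the geometric picture is right, but the central analytic difficulty---that the shearing coefficient $A_t$ is an ergodic integral of a noncommuting time-change $\phi^V$ rather than of the commuting flow $\phi^U$, and that the resulting random time shift can be $o(t)$ but not $O(1)$---is not resolved by a direct transfer. The paper's bootstrap on the bad set via the Fourier components $D^{\bf v}_t$ and the quasi-invariance and comparison lemmas is the key missing ingredient.
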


The rest of the section is devoted to the proof of Proposition \ref{noncoboundary_case}.

\smallskip

We are assuming that the function $\alpha^{\perp}$ is not a measurable $X$-coboundary. Recall that 
$\alpha^{\perp}$ is the projection of $\alpha$ onto $H_0([Z]_{\Gamma})^{\perp}$; in particular it is also a trigonometric polynomial with respect to $\z=\exp([Z]_\Gamma)$ of the same degree as $\alpha$ without constant term.
For every $\sigma>0$, let $\gamma^\sigma_{x,t}$ be the path defined as
$$
\gamma^\sigma_{x,t} (s) = (\phi^V_t \circ \phi^{Z}_s) (x) \,, \quad \text{ for all } s\in [0,\sigma]\,.
$$
Let $D_t$ denote the function on $M$ defined as
\begin{equation}
\label{eq:time_change_int}
D_t (x) :=  -  \int_0^t  \frac{Z \alpha}{\alpha} \circ \phi^V_{\tau} (x) \diff \tau \,.
\end{equation}
By Lemma~\ref{corollary:push_forward}, we have 
$$
\frac{\diff \gamma^\sigma_{x,t}}{\diff s}  (s) = [D_t \circ \phi^Z_s(x)] V(\gamma^\sigma_{x,t}(s)) +   Z (\gamma^\sigma_{x,t}(s))\,,
$$
thus the function $D_t$ describes shearing of the curves $\gamma^\sigma_{x,t}$ in direction $V$. We can write
\begin{equation}
\label{eq:int_along_gamma}
\int_{\gamma^\sigma_{x,t}} f \hat V   =  \int_0^\sigma   (f \circ \phi^V_t \circ \phi^Z_s)(x)  \, 
(D_t \circ \phi^Z_s)(x) \diff s\,.
\end{equation}
In the above formula the LHS  is a line integral of a $1$-form, which equals the (uniformly bounded)  integral of an exact $1$-form up to a uniformly bounded error (with respect to time $t\geq 0$),  whenever the function $f$ is a coboundary for the flow $\phi^V$.  In our argument, mixing will follow from the decay of correlations of coboundaries, which by ergodicity form a dense set in space of square integrable functions. The decay of correlations of coboundaries will in turn be derived from the boundedness of the RHS in formula
\eqref{eq:int_along_gamma} and from the growth in measure of the function $D_t$.

\begin{remark}\label{rk:equiv_tautilde}
We can equivalently express $D_t$ as an integral along orbits of $\phi^X$ as follows.  Let us recall (see \S\ref{sec:basic}) that, for all $x\in M$, we have $\phi^V_{\tau(x,t)}(x) = \phi^X_t(x)$, where $\tau(x,t) = \int_0^t \alpha \circ \phi^X_r(x) \diff r$.
By changing variable (setting $\tau(x,r)=\int_0^r \alpha \circ \phi^X_s(x) \diff s$ so $\diff \tau =\alpha\circ \phi^X_r\diff r$) and since $\overline{\alpha}$ is $Z$-invariant we can rewrite $D_t$ in \eqref{eq:time_change_int} as follows:
\begin{align}
%\label{eq:D}
\nonumber
D_t (x) & =  -\int_0^t \frac{Z\alpha}{\alpha}\circ  \phi^V_{\tau(x,r)}(x) \diff \tau =- \int_0^{\widetilde{\tau}(x,t)} \frac{Z\alpha}{\alpha}\circ  \phi^X_{r}(x) \left( \alpha\circ \phi^X_r (x)\right) \diff t=\\ \nonumber
&  = -\int_0^{\widetilde{\tau}(x,t)} Z\alpha \circ \phi_r^X(x) \diff r= -\int_0^{ \widetilde{\tau} (x,t)} Z ( {\alpha}^{\perp} ) \circ \phi^X_{r}(x) \diff r, 
%= \widetilde{D}_{ \widetilde{\tau} (x,t)}(x),
\end{align}
where $\widetilde{\tau} (x,t)$ is such that $t = \int_0^{ \widetilde{\tau} (x,t) }\alpha \circ \phi^X_r(x) \diff r$, or, in other words,
\begin{equation}\label{def:tildetau}
 \widetilde{\tau} (x,t) = \int_0^t \frac{1}{\alpha} \circ \phi^V_r(x) \diff r.
\end{equation}
\end{remark}

The key estimate to prove mixing in this case is given by the following shearing result. 

\begin{proposition}[Growth of $D_t$]\label{prop:D_t_grows}
 For every constants $C>1$,  
$$
\lim_{t\to \infty} \mu \left\{x\in M :\,   \vert D_t (x) \vert <C\right\} \,=\, 0\,.
$$
\end{proposition}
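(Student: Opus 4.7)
\emph{Proof plan.} The plan is to adapt the scheme of Theorem~\ref{thm:stretch_ergodic_integral} (Gottschalk--Hedlund plus decoupling and inclusion--exclusion) to the $\phi^V$-cocycle $D_t$, using as cohomological input the change of variable of Remark~\ref{rk:equiv_tautilde}, which expresses $D_t(x)=-\int_0^{\tilde\tau(x,t)} Z\alpha^\perp\circ\phi^X_r(x)\,dr$ as an integral along the untwisted nilflow $\phi^X$.

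\emph{Non-coboundary property.} I would first argue that the generator $-Z\alpha/\alpha=-Z\alpha^\perp/\alpha$ of $D_t$ is not a measurable coboundary for $\phi^V$; equivalently, $-Z\alpha^\perp$ is not a measurable coboundary for $\phi^X$. Suppose $Xu=-Z\alpha^\perp$ for a measurable $u$, and decompose $u=u_0+u^\perp$ according to $L^2(M)=H_0([Z]_\Gamma)\oplus H_0([Z]_\Gamma)^\perp$. Since $[X,Z]=0$, the operator $X$ preserves this decomposition, so $Xu^\perp=-Z\alpha^\perp$. On each non-trivial Fourier component $H_{\bf v}([Z]_\Gamma)$, $Z$ acts as multiplication by $2\pi i\langle {\bf v},Z\rangle\neq 0$, with modulus uniformly bounded below by the constant $c_{m,Z}>0$ of the proof of Proposition~\ref{proposition:decoupling} (since $[Z]_\Gamma$ is the smallest rational subspace containing $Z$); hence $Z$ is invertible on $H_0([Z]_\Gamma)^\perp$. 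Writing $u^\perp=Zw$ for a measurable $w\in H_0([Z]_\Gamma)^\perp$ and using $[X,Z]=0$ gives $Z(Xw+\alpha^\perp)=0$, so $Xw=-\alpha^\perp$, contradicting the hypothesis of case~(ii).

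\emph{Cesaro convergence, decoupling, and inclusion--exclusion.} A Gottschalk--Hedlund argument parallel to Lemma~\ref{lemma:Gottschalk-Hedlund}, applied to $\phi^V$ with invariant measure $\mu_V=\alpha\mu$, yields $\frac{1}{T}\int_0^T\mu_V\{|D_t|<C\}\,dt\to 0$; an analog of Corollary~\ref{corollary:arithmetic_progression} then produces arithmetic progressions $\{it_0\}_{i=1}^\ell$ along which $\mu_V\{|D_{it_0}|<C_0\}<\varepsilon_0$. The key step is a decoupling estimate $\mu_V\{|D_T\circ\phi^V_t-D_T|<2C\}<\varepsilon$ for $T\geq T_0$: using the cocycle identity $D_T\circ\phi^V_t-D_T=D_t\circ\phi^V_T-D_t$ and the change of variable of Remark~\ref{rk:equiv_tautilde}, this difference is expressed as an integral of $Z\alpha^\perp$ along $\phi^X$ between the moving endpoints $\tilde\tau(x,T)$ and $\tilde\tau(x,T+t)$. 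Since $\phi^X$ does commute with $\phi^Z$ and $Z\alpha^\perp$ is a trigonometric polynomial with respect to $[Z]_\Gamma$ in $H_0([Z]_\Gamma)^\perp$ (central differentiation preserves the fibrewise Fourier decomposition), the argument of Proposition~\ref{proposition:decoupling}---based on pushing short $Y$-curves and exploiting the linear-in-$T$ growth of their $Z$-component $B_T$ from Corollary~\ref{corollary:push_forward}---extends to yield the desired estimate. The inclusion--exclusion finale of the proof of Theorem~\ref{thm:stretch_ergodic_integral} then gives $\mu_V\{|D_t|<C\}\to 0$, and the conclusion for $\mu$ follows by equivalence of $\mu$ and $\mu_V$.

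\emph{Main obstacle.} The decoupling step is the main difficulty: since $\phi^V$ does not commute with $\phi^Z$, Proposition~\ref{proposition:decoupling} does not apply directly to $D_t$. The change of variable of Remark~\ref{rk:equiv_tautilde} is essential, as it transfers the computation to the commuting pair $(\phi^X,\phi^Z)$, where the shearing mechanism of Proposition~\ref{proposition:decoupling} can be re-run.
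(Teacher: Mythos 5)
Your plan correctly identifies the central difficulty (that $\phi^V$ does not commute with $\phi^Z$, so Proposition~\ref{proposition:decoupling} cannot be applied directly to $D_t$) and correctly proposes using Remark~\ref{rk:equiv_tautilde} to pass to $\phi^X$. But the crucial step --- the claim that ``the argument of Proposition~\ref{proposition:decoupling} extends'' once one passes to the commuting pair $(\phi^X,\phi^Z)$ --- is a genuine gap, and I don't see how to fill it along the lines you sketch. The obstruction: although $\phi^X$ commutes with $\phi^Z$, the reparametrized time $\widetilde{\tau}(x,t)$ in $D_t(x) = -\int_0^{\widetilde\tau(x,t)} Z\alpha^\perp\circ\phi^X_r(x)\,dr$ is \emph{not} $\phi^Z$-invariant, so the functions $D^{\bf v}_t := -\int_0^{\widetilde\tau(\cdot,t)} Z\alpha_{\bf v}\circ\phi^X_r\,dr$ are not the $H_{\bf v}$-Fourier components of $D_t$ and the clean mode-by-mode decoupling breaks. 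Moreover, if you re-run the pushforward computation of Lemma~\ref{corollary:push_forward} for the actual dynamics $\phi^V_T$ (which is what the cocycle relation forces you to push by), the $V$-component coefficient acquires the extra term $\int_0^T(\frac{Z\alpha}{\alpha}\circ\phi^V_\tau)(\int_0^\tau\frac{1}{\alpha}\circ\phi^V_r\,dr)\,d\tau$, which is $\approx -T\,D_T + \int_0^T D_\tau\,d\tau$; this grows like $T\,|D_T|$ rather than $o(T)$, so the localization estimate in the decoupling proof (which requires the analogue of $\|A_T\|_\infty/T\to 0$ uniformly) fails precisely because $D_T$ is what you are trying to show is large. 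This circularity is the real problem.

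The paper's proof avoids re-running decoupling on $D_t$. It instead proves (Lemma~\ref{lemma:sumD_t_grows}, via the saturation argument using Gr\"onwall-type quasi-invariance along central directions) that on the bad set $\hat{\mathcal B}_t(C)=\{\sum_{\bf v}|D^{\bf v}_t| \lesssim C\}$ the function $\widetilde{\tau}(\cdot,t)$ is within $O(C)$ of a genuinely $[Z]_\Gamma$-\emph{invariant} reparametrized time $\overline{\tau}(\cdot,t)$, and that $\big|\int_0^{\overline\tau(x,t)}\alpha^\perp\circ\phi^X_r\,dr\big|\lesssim C$ on $\hat{\mathcal B}_t(C)$. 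Since $\int_0^{\overline\tau(x,t)}\alpha^\perp\circ\phi^X_r\,dr = \int_0^t(\alpha^\perp/\overline\alpha)\circ\phi^U_r\,dr$ with $U=(1/\overline\alpha)X$, this is an ergodic integral along a time-change $\phi^U$ that \emph{does} commute with $\phi^{[Z]_\Gamma}$, so Theorem~\ref{thm:stretch_ergodic_integral} applies directly (with $f=\alpha^\perp/\overline\alpha$; note the paper also works with $\alpha^\perp$ rather than $Z\alpha^\perp$, so your first step showing $Z\alpha^\perp$ is not a coboundary is not actually needed). This forces $\mu(\hat{\mathcal B}_t(C))\to 0$. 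Finally, Lemma~\ref{lemma:D_t_comparison} relates $|D_t|$ to $\sum_{\bf v}|D^{\bf v}_t|$ via sublevel-set estimates for relative trigonometric polynomials together with the quasi-invariance Lemma~\ref{lemma:quasi_invariance}. So the right strategy is to compare $\widetilde{\tau}$ to a $Z$-invariant reparametrization \emph{on the bad set} and then quote the already-established result for the commuting flow, not to rebuild the decoupling machinery for $\phi^V$.
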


The next subsection is fully devoted to the proof of Proposition~\ref{prop:D_t_grows}.

\subsubsection{Proof of Proposition~\ref{prop:D_t_grows}.} Let $\{\alpha_{\bf v}\vert {\bf v} \in \Lambda^\ast \}$ denote the (finite) set of Fourier components of the trigonometric
polynomial $\alpha^\perp$ and, for every ${\bf v}\in \Lambda^\ast$ and  let 
$$
D^{\bf v}_t(x) :=  -\int_0^{ \widetilde{\tau} (x,t)} Z\alpha_{\bf v}  \circ \phi^X_{r}(x) \diff r=-\int_0^{ \widetilde{\tau} (x,t)} Z\alpha^\perp_{\bf v}  \circ \phi^X_{r}(x) \diff r\,.
$$
We split the proof Proposition~\ref{prop:D_t_grows} into two Lemmas. The first (Lemma~\ref{lemma:sumD_t_grows}) shows  that  $\sum_{{{\bf v}}\in \Lambda^\ast}|D^{\bf v}_t|$ grows in measure;  the second (Lemma \ref{lemma:D_t_comparison}) compares $D_t$  with  $\sum_{{{\bf v}}\in \Lambda^\ast}|D^{\bf v}_t|$. 

 Along the way, we establish a quasi-invariance property of the
function $\sum_{{{\bf v}}\in \Lambda^\ast}|D^{\bf v}_t|$, which we apply in the the
proof of Lemma~\ref{lemma:D_t_comparison} and of Lemma~\ref{lemma:distortion}
on distortion bounds.

\begin{lemma}
\label{lemma:sumD_t_grows}
For every $C>1$, we have
$$
\lim_{t \to \infty} \mu \left\{ x\in M : \,  \sum_{{\bf v}\in \Lambda^\ast   |D^{\bf v}_t(x)| < C} \right\} = 0.
$$
\end{lemma}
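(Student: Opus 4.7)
The plan is to exploit the eigenfunction property of each Fourier component $\alpha_{\bf v}$ in order to express $D^{\bf v}_t$ as a $c_{\bf v}$-multiple of an ergodic integral of $\alpha_{\bf v}$ along $\phi^X$ evaluated at the random time $\widetilde{\tau}(x,t)$, and then to reduce to Theorem~\ref{thm:stretch_ergodic_integral} applied to $\phi^X$ itself.

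First, since $\alpha_{\bf v}\in H_{\bf v}([Z]_\Gamma)$, one has $Z\alpha_{\bf v}=c_{\bf v}\alpha_{\bf v}$ with $c_{\bf v}:=2\pi\imath\langle{\bf v},Z\rangle$, and $c_{\bf v}\neq 0$ for every ${\bf v}\in\Lambda^\ast\setminus\{0\}$ because $[Z]_\Gamma$ is the smallest $\Gamma$-rational subspace containing $Z$. Setting $c_{\min}:=\min\{|c_{\bf v}|:\alpha_{\bf v}\not\equiv 0\}>0$ and using Remark~\ref{rk:equiv_tautilde}, I would write
\[
D^{\bf v}_t(x)=-c_{\bf v}G^{\bf v}_{\widetilde{\tau}(x,t)}(x),\qquad\text{with}\quad G^{\bf v}_T(x):=\int_0^T\alpha_{\bf v}\circ\phi^X_r(x)\,dr.
\]
Summing over ${\bf v}$ and applying the triangle inequality yields
\[
\sum_{{\bf v}\in\Lambda^\ast}|D^{\bf v}_t(x)|\geq c_{\min}\sum_{{\bf v}\in\Lambda^\ast}|G^{\bf v}_{\widetilde{\tau}(x,t)}(x)|\geq c_{\min}\,|G_{\widetilde{\tau}(x,t)}(x)|,
\]
where $G_T:=\sum_{\bf v}G^{\bf v}_T=\int_0^T\alpha^\perp\circ\phi^X_r\,dr$. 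The inclusion $\{\sum_{\bf v}|D^{\bf v}_t|<C\}\subseteq\{|G_{\widetilde{\tau}(\cdot,t)}|<C/c_{\min}\}$ thus reduces the Lemma to proving $\mu\{x:|G_{\widetilde{\tau}(x,t)}(x)|<C'\}\to 0$ as $t\to\infty$, for every $C'>0$.

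Next, I would invoke Theorem~\ref{thm:stretch_ergodic_integral} in its simplest instance, taking $\beta\equiv 1$ (hence $U=X$) and $f=\alpha^\perp$. The function $\alpha^\perp$ is a trigonometric polynomial with respect to $[Z]_\Gamma$ lying in $H_0([Z]_\Gamma)^\perp$ by construction and, by the case (ii) hypothesis, is not a measurable coboundary for $\phi^X$. The theorem hence yields $\mu\{|G_T|<C'\}\to 0$ as the \emph{deterministic} time $T\to\infty$.

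The main obstacle is to transfer this conclusion from a deterministic $T$ to the random time $T=\widetilde{\tau}(x,t)$, which varies with $x$. For this I would exploit unique ergodicity of $\phi^V$ with respect to $\alpha\,d\mu$: Birkhoff together with unique ergodicity gives the uniform convergence $\widetilde{\tau}(x,t)/t\to 1/M_\alpha$ in $x\in M$, where $M_\alpha:=\int_M\alpha\,d\mu$, so that for any $\eta>0$ and $t$ sufficiently large, $\widetilde{\tau}(x,t)$ lies in an interval $I_t:=[t(1-\eta)/M_\alpha,\,t(1+\eta)/M_\alpha]$ independent of $x$. Discretizing $I_t$ into finitely many subintervals of length $\delta$ and using the cocycle relation $G_{T'}(x)=G_T(x)+G_{T'-T}(\phi^X_T(x))$ together with the uniform bound $\|\alpha^\perp\|_\infty$, one controls the variation of $|G_T|$ within each subinterval. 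The delicate point, and what I expect to be the technical heart of the proof, is balancing $\delta$ and the number of subintervals against the (non-quantitative) convergence from Theorem~\ref{thm:stretch_ergodic_integral}; I anticipate that the quasi-invariance of $\sum_{\bf v}|D^{\bf v}_t|$ under the central action $\Phi^{[Z]_\Gamma}$, to be established along the way and reused in Lemma~\ref{lemma:D_t_comparison}, will play a crucial role in closing this gap, in the spirit of the decoupling argument of Proposition~\ref{proposition:decoupling} (here adapted to compensate for the fact that $\phi^V$ does not commute with $\phi^Z$).
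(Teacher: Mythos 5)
Your reductions at the start are fine: the identity $D^{\bf v}_t(x)=-c_{\bf v}\int_0^{\widetilde{\tau}(x,t)}\alpha_{\bf v}\circ\phi^X_r(x)\,\diff r$, the lower bound through $c_{\min}$, and the observation that Theorem~\ref{thm:stretch_ergodic_integral} with $\beta\equiv 1$ applies to $f=\alpha^\perp$ are all correct, and you have correctly located the difficulty in the passage from the deterministic time $T$ to the random time $\widetilde{\tau}(x,t)$. But the scheme you sketch to close that gap does not work, and it is precisely here that the paper's proof takes a different route. Your interval $I_t$ has length of order $\eta t$, so the discretization into subintervals of fixed length $\delta$ produces a number of deterministic times $T_j$ growing linearly in $t$; since Theorem~\ref{thm:stretch_ergodic_integral} gives only a qualitative, non-quantitative decay of $\mu\{|G_{T_j}|<C'\}$, the union bound over these $\sim t/\delta$ bad sets cannot be controlled, no matter how $\delta$ and $\eta$ are tuned. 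Nor can one replace $\widetilde{\tau}(x,t)$ by a deterministic time up to bounded error: the deviation $\widetilde{\tau}(x,t)-t/M_\alpha$ is an ergodic integral which is in general unbounded for nilflows. Finally, the quasi-invariance of $\sum_{\bf v}|D^{\bf v}_t|$ under the central action concerns oscillation in the $[Z]_\Gamma$-directions only and does not address the fluctuation of $\widetilde{\tau}(x,\cdot)$ from point to point, so the appeal to it ``in the spirit of the decoupling argument'' does not fill the hole.

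The paper avoids the random-time problem altogether by never comparing $\widetilde{\tau}(x,t)$ to a deterministic $\phi^X$-time. Instead it introduces the cocycle $\overline{\tau}(x,t)$ defined by $t=\int_0^{\overline{\tau}(x,t)}\overline{\alpha}\circ\phi^X_r(x)\,\diff r$, which is $\Phi^{[Z]_\Gamma}$-invariant, and shows, via Gr\"onwall-type estimates obtained by differentiating $\widetilde{\tau}$ and $D^{\bf v}_t$ along central directions and saturating along the (periodic) orbits of a rational basis of $[Z]_\Gamma$, that \emph{on the sublevel set} $\mathcal B_t(C)$ (suitably saturated) one has $|\widetilde{\tau}(x,t)-\overline{\tau}(x,t)|\leq K C$ and hence $\bigl\lvert\int_0^{\overline{\tau}(x,t)}\alpha^\perp\circ\phi^X_r(x)\,\diff r\bigr\rvert\leq K'C$. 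Since $\int_0^{\overline{\tau}(x,t)}\alpha^\perp\circ\phi^X_r\,\diff r=\int_0^{t}(\alpha^\perp/\overline{\alpha})\circ\phi^U_r\,\diff r$ with $U=(1/\overline{\alpha})X$, Theorem~\ref{thm:stretch_ergodic_integral} is applied not with $\beta\equiv 1$ but with $\beta=\overline{\alpha}$ (which is $[Z]_\Gamma$-invariant, cf.\ Lemma~\ref{lemma:time_change_commute}) and $f=\alpha^\perp/\overline{\alpha}$, so the ergodic integral is evaluated at the \emph{deterministic} time $t$, and the smallness of its sublevel sets forces $\mu(\mathcal B_t(C))\to 0$. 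In short: the comparison is made between two random times ($\widetilde{\tau}$ and $\overline{\tau}$) and is needed only on the bad set itself, which is the key idea your proposal is missing.
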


\begin{proof}
Let $W$ be any central vector field such that  $W \in [Z]_\Gamma$. From the definition \eqref{def:tildetau} of $\widetilde{\tau}$, by differentiation by $W$ we have
\begin{equation}
\label{eq:Wtau}
0= W (t) = W \left( \int_0^{ \widetilde{\tau} (x,t) }\alpha \circ \phi^X_r(x) \diff r \right)  =
\left (W \widetilde{\tau}  (x,t) \right)  \alpha \circ \phi^X_{ \widetilde{\tau} (x,t) }(x)  +  \int_0^{ \widetilde{\tau} (x,t) } W\alpha \circ \phi^X_r (x)\diff r \,.
\end{equation}
It follows that 
\begin{equation}
\label{eq:tau_diff_bound}
\vert W\widetilde{\tau} (x,t)  \vert \leq  \frac{1}{\min \alpha}  \left \vert  \int_0^{ \widetilde{\tau} (x,t) } W\alpha \circ \phi^X_r(x) \diff r   \right\vert  \,.
\end{equation}
We also have
$$
\begin{aligned}
\frac{\diff}{\diff s}   D_t^{\bf v} \circ \phi^W_s (x) &=- \frac{\diff}{\diff s}  \left( \int_0^{\widetilde\tau(\phi^W_s(x),t)} Z\alpha_{\bf v} \circ \phi^X_r \circ \phi^W_s(x) \diff r   \right) \\ &=  -  \left( W\widetilde{\tau} (\phi^W_s(x),t)\right)   Z\alpha_{\bf v} \circ \phi^X_{\widetilde{\tau} (\phi^W_s(x),t)  }  \circ \phi^W_s(x)  - \int_0^{\widetilde\tau(\phi^W_s(x),t)} WZ \alpha_{\bf v} \circ \phi^X_r \circ \phi^W_s(x) \diff r
\end{aligned} 
$$
By the above identity combined with \eqref{eq:tau_diff_bound}, we have the following estimate
$$
\begin{aligned}
\left\vert \frac{\diff}{\diff s} D^{\bf v}_t \circ \phi^W_s (x) \right\vert  \leq  \frac{\max \vert Z \alpha \vert}{\min \alpha} & \left\vert   
\int_0^{ \widetilde{\tau} (\phi^W_s (x),t) } W\alpha \circ \phi^X_r \circ \phi^W_s (x) \diff r  \right \vert  \\ &+
\vert \langle W,{\bf v}\rangle \vert  \left \vert \int_0^{\widetilde\tau(\phi^W_s(x),t)} Z\alpha_{\bf v} \circ \phi^X_r \circ \phi^W_s(x) \diff r\right\vert\,,
\end{aligned}
$$
since $\langle Z, {\bf v}\rangle \not=0$ for all ${\bf v} \in \Lambda^\ast$, by the triangular inequality
we have
\begin{equation}
\label{eq:dir_der_bound}
\left\vert   
\int_0^{ \widetilde{\tau} (\phi^W_s (x),t) } W\alpha \circ \phi^X_r \circ \phi^W_s (x) \diff r  \right \vert
\leq \sum_{{\bf v}\in \Lambda^\ast  }  \left \vert \frac{\langle W,{\bf v}  \rangle}{\langle Z,{\bf v}  \rangle    } \right\vert \,  
 \left\vert   
\int_0^{ \widetilde{\tau} (\phi^W_s (x),t) } Z\alpha_{\bf v} \circ \phi^X_r \circ \phi^W_s (x) \diff r  \right \vert \,.
\end{equation}
Hence there exists a constant $K(\alpha)>1$ such that, for every $W \in [Z]_\Gamma$ and every ${\bf v}\in \Lambda^\ast$,  we have 
$$
\left\vert \frac{\diff}{\diff s} D^{\bf v}_t \circ \phi^W_s (x) \right\vert  \leq K(\alpha)  \sum_{{\bf v}\in \Lambda^\ast  }  
 \left\vert   
\int_0^{ \widetilde{\tau} (\phi^W_s (x),t) } Z\alpha_{\bf v} \circ \phi^X_r \circ \phi^W_s (x) \diff r  \right \vert
= K(\alpha) \sum_{{{\bf v}}\in \Lambda^\ast  }   \vert  D^{\bf v}_t \circ \phi^W_s(x) \vert\,.
$$
From the identity
$$
\begin{aligned}
\frac{1}{2} \frac{\diff}{\diff s} \left( \sum_{{\bf v} \in \Lambda^\ast} \vert D^{\bf v}_t \circ \phi^W_s (x) \vert^2 \right)  =
 \re \left[ \sum_{{\bf v} \in \Lambda^\ast} \overline{ D^{\bf v}_t} \circ \phi^W_s (x)   \frac{\diff}{\diff s} D^{\bf v}_t \circ \phi^W_s (x) \right]\,,
\end{aligned} 
$$
since $\alpha^\perp$ is a trigonometric polynomial with respect to $\z=\exp [Z]_\Gamma$ and hence all the sums  in the above formula are finitely supported,
we finally derive  that there exists a constants $K'(\alpha)>1$ such that the following bound holds:
\begin{equation}
\label{eq:D_diff_bound}
\frac{\diff}{\diff s} \left( \sum_{{\bf v} \in \Lambda^\ast} \vert D^{\bf v}_t \circ \phi^W_s (x) \vert^2 \right)  \leq K(\alpha)  \left( \sum_{{\bf v} \in \Lambda^\ast} \vert D^{\bf v}_t \circ \phi^W_s (x) \vert \right)^2 \leq 
K'(\alpha)  \left( \sum_{{\bf v} \in \Lambda^\ast} \vert D^{\bf v}_t \circ \phi^W_s (x) \vert^2 \right)\,.
\end{equation}
Let ${\mathcal B}_t (C)\subset M$ denote the set    
$$
{\mathcal B}_t (C) := \left\{x \in M :  \sum_{{\bf v}\in \Lambda^\ast  \vert D^{\bf v}_t (x)\vert < C} \right\} \,.
$$
Any rational central vector field $W \in [Z]_\Gamma$ has only closed orbits  of the same 
period~$T_W>0$, so that by the differential form of Gr\"onwall inequality (that is, by comparison) we derive from the bound in formula
\eqref{eq:D_diff_bound}  the following estimate  on  the set ${\mathcal B} ^{W}_t (C)$, defined as the saturation of the set ${\mathcal B}_t (C)$ by the orbit foliation of the flow $\phi^W$:  there exists a constant $K_W(\alpha) >1$ such that 
$$
\sum_{{\bf v} \in \Lambda^\ast} \vert D^{\bf v}_t (x) \vert^2 \leq K_W(\alpha)  C^2 \,, \quad \text{ for all } x\in {\mathcal B}^{W}_t (C)\,.
$$
Let then $\{W_1, \dots, W_d\}  \subset [Z]_\Gamma$ denote a rational basis. Let $\hat {\mathcal B}_t (C)
:= {\mathcal B}^{W_1, \dots, W_d}_t (C)$ denote the set obtained by successive saturation of the set
${\mathcal B}_t(C)$  by the orbit foliations of the completely periodic flows $\phi^{W_1}, \dots, \phi^{W_d}$. 
By the above argument, based on the estimate~\eqref{eq:D_diff_bound}, assuming that there exists a 
constant $K_{W_1, \dots, W_j} (\alpha)>1$ such that
$$
 \sum_{{\bf v} \in \Lambda^\ast} \vert D^{\bf v}_t (x) \vert^2 \leq K_{W_1, \dots, W_j} (\alpha) C^2 \,, \quad \text{ for all } x\in {\mathcal B}^{W_1, \dots, W_j}_t (C)\,,
$$
we derive that there exists a constant $K_{W_1, \dots, W_{j+1}} (\alpha)>1$ such that
$$
 \sum_{{\bf v} \in \Lambda^\ast} \vert D^{\bf v}_t (x) \vert^2 \leq K_{W_1, \dots, W_{j+1}} (\alpha) C^2 \,, \quad \text{ for all } x\in {\mathcal B}^{W_1, \dots, W_{j+1}}_t (C)\,.
$$
Thus, by finite induction we derive that  there exists a constant ${\hat K}(\alpha)>1$ such that 
\begin{equation}
\label{eq:D_bound}
\sum_{{\bf v} \in \Lambda^\ast} \vert D^{\bf v}_t (x) \vert  \leq {\hat K} (\alpha) C \,, \quad \text{ for all } x\in 
 \hat {\mathcal B}_t (C)\,.
\end{equation}
From the estimates~\eqref{eq:tau_diff_bound}, \eqref{eq:dir_der_bound} and \eqref{eq:D_bound} it
also follows that there exists a constant ${\hat K}'(\alpha)>1$ such that, for all $W \in [Z]_\Gamma$,
\begin{equation}
\label{eq:Wtau_bound}
\vert W\widetilde{\tau} (x,t)  \vert \leq {\hat K}' (\alpha) C \,, \quad \text{ for all } x\in 
 \hat {\mathcal B}_t (C)\,.
\end{equation}
Let $\widetilde{\tau}_Z (x,t)$ denote the ($\phi^{[Z]_\Gamma}$-invariant) average of the function $\widetilde{\tau} (y,t)$ over the orbit $y\in \T_{ [Z]_\Gamma}(x)$ of $x\in M$ under the central Abelian action  
$\phi^{[Z]_\Gamma}$ (which is a finite dimensional torus) with respect to the conditional volume measure. 
By the bound in formula~\eqref{eq:Wtau_bound} (which holds for all $W \in [Z]_\Gamma$ and hence for $W=Z$ and, by definition of $\mathcal{B}_t (C)$, for all points on the orbit $(\phi_s^Z(x))_{s\geq 0}$ if $x\in {\mathcal B} _t (C)$), there exists a constant $K''(\alpha)>1$  such that
\begin{equation}
\label{eq:D-tau}
\vert \widetilde{\tau}(x,t) - \widetilde{\tau}_Z(x,t) \vert \leq   {\hat K}''(\alpha) C  \,, \quad \text{ for all }  x \in \hat {\mathcal B} _t (C)\,,
\end{equation}
and, as a consequence, we have that 
\begin{equation}
\label{eq:meas_bound}
\left\vert \int_0^{ \widetilde{\tau}_Z (x,t)} Z {\alpha}^{\perp}  \circ \phi^X_{r}(x) \diff r   \right\vert  \leq 
({\hat K}(\alpha) + {\hat K}''(\alpha) \max\vert Z\alpha\vert      ) C   \,, \quad \text{ for all }  x \in \hat {\mathcal B}_t (C)\,.
\end{equation}
Since $\tilde \tau_Z(\cdot,t)$ is constant along the orbits of $\phi^{[Z]_\Gamma}$, for all $W\in [Z]_\Gamma$
we have that
$$
\int_0^{\widetilde\tau_Z(x,t)} W \alpha^{\perp} \circ \phi_{r}^X  (x) \diff r = \left.\frac{\diff }{\diff s}\right\vert_{s=0} \left( \int_0^{\widetilde\tau_Z(\cdot,t)}  \alpha^{\perp} \circ \phi_{r}^X \diff r \right) \circ \phi_s^W(x).
$$
It follows that there exists a constant ${\hat K}^{(3)}(\alpha)>1$ such that
\begin{equation}
\label{eq:perp_bound_1}
\left\vert \int_0^{ \widetilde{\tau}_Z (x,t)} {\alpha}^{\perp}  \circ \phi^X_{r}(x) \diff r   \right\vert  \leq 
 {\hat K}^{(3)}(\alpha) C   \,, \quad \text{ for all }  x \in \hat {\mathcal B}_t (C)\,.
\end{equation}
By the estimate~\eqref{eq:D-tau} it follows that, for all $x \in  \hat {\mathcal B}_t (C)$, we have
$$
\begin{aligned}
&\left\vert \int_0^{ \widetilde{\tau}_Z(x,t)} \overline{\alpha} \circ \phi^X_{r}(x) \diff r  -t \right\vert
= \left\vert \int_0^{ \widetilde{\tau}_Z(x,t)} \overline{\alpha} \circ \phi^X_{r}(x) \diff r  -
\int_0^{ \widetilde{\tau}(x,t)} \alpha \circ \phi^X_{r}(x) \diff r \right\vert  \\ 
&\leq \int_0^{ \widetilde{\tau}_Z(x,t) -  \widetilde{\tau}(x,t) } \overline{\alpha} \circ \phi^X_{r}(x) \diff r 
+  \left \vert \int_0^{ \widetilde{\tau}(x,t)
-\widetilde{\tau}_Z(x,t)} \alpha^\perp \circ \phi^X_{r}(x) \diff r+ \int_0^{ \widetilde{\tau}_Z(x,t)} \alpha^\perp \circ \phi^X_{r}(x) \diff r  \right\vert  \\
&\leq \left[(\max \overline{\alpha}  + \max \vert \alpha^\perp \vert){\hat K}''(\alpha)  + {\hat K}^{(3)}(\alpha)\right]   C 
\end{aligned} 
$$
For every $x \in M$,  let $\overline{\tau} (x,t)$ denote the unique solution of the equation
$$
t = \int_0^{\overline{\tau} (x,t)}  \overline{\alpha} \circ \phi^X_r (x) \diff r  \,.
$$
By the above estimates it follows immediately that 
\begin{equation}
\label{eq:D-tau_bis}
\vert \overline{\tau}(x,t) - \widetilde{\tau}_Z(x,t) \vert \leq 
 \left[ \frac{ (\max \overline{\alpha}  + \max \vert \alpha^\perp \vert) {\hat K}''(\alpha)  + {\hat K}^{(3)}(\alpha)}
 { \min  \overline {\alpha}  }\right]    C  \,, \quad \text{ for all }  x \in \hat {\mathcal B} _t (C)\,,
\end{equation}
hence by \eqref{eq:perp_bound_1} there exists a constant ${\hat K}^{(4)}(\alpha)>1$ such that
\begin{equation}
\label{eq:perp_bound_2}
\left\vert \int_0^{ \overline{\tau} (x,t)} {\alpha}^{\perp}  \circ \phi^X_{r}(x) \diff r   \right\vert  \leq 
 {\hat K}^{(4)}(\alpha) C   \,, \quad \text{ for all }  x \in \hat {\mathcal B}_t (C)\,.
\end{equation}
Since the function $\overline{\alpha}$ is $\phi^{[Z]_\Gamma}$-invariant  and  $\phi^X$ commutes
with  $\phi^{[Z]_\Gamma}$, it follows that the function $\overline{\tau} (x,t)$ is $\phi^{[Z]_\Gamma}$-invariant
for every (fixed) $t \in \R$.  In addition, by its definition the function $\overline {\tau}$ is a cocycle and defines a time-change of $\phi^X$. In fact, we have
$$
\begin{aligned}
t= t+s- s&= \int_0^{\overline{\tau} (x,t+s) }  \overline{\alpha} \circ \phi^X_r (x) \diff r - 
\int_0^{\overline{\tau}(x,s)}  \overline{\alpha} \circ \phi^X_r (x) \diff r \\ &= \int_{\overline{\tau}(x,s)} ^{\overline{\tau} (x,t+s)}  \overline{\alpha} \circ \phi^X_r (x) \diff r  = \int _0 ^{\overline{\tau} (x,t+s) - \overline{\tau} (x,s)}   \overline{\alpha} \circ \phi^X_r \circ \phi^X_{\overline{\tau} (x,s)}(x)  \diff r 
\end{aligned} 
$$
which by uniqueness of the solution implies that
$$
  \overline{\tau} (\phi^X_{\overline{\tau} (x,s)}(x) ,t) =  \overline{\tau} (x,t+s) - \overline{\tau} (x,s) \,.
$$
We can therefore express the ergodic integral of formula \eqref{eq:perp_bound_2}  for the flow $\phi^X$, up to the $\phi^{[Z]_\Gamma}$-invariant time $\overline{\tau}(x,t)$, as an ergodic integral up to time $t$ for a time-change $\phi^U$ of $\phi^X$ which commutes with $\phi^{[Z]_\Gamma}$. 
Let $U$ be the vector field defined as $U= (1/{\overline\alpha}) X$. For every continuous function $f$ on $M$
and for all $(x,t) \in M\times \R$ we have 
$$
\int_0^t  (f /\overline{\alpha})\circ \phi^U_r (x) \diff r =   \int_0^{\overline{\tau}(x,t)}  f \circ \phi^X_r (x) \diff r 
$$
Since the function $1/{\overline\alpha} \in$ is $[Z]_\Gamma$-invariant (that is it belongs to $H_0([Z]_\Gamma)$, the function $\alpha^\perp$ is a trigonometric polynomial of degree $m$ with respect to $[Z]_\Gamma$, and $\alpha^\perp$ is not a coboundary with respect to $X$, or, equivalently, $\alpha^\perp/ {\overline\alpha}$ is not a coboundary with respect to $U$, by Theorem~\ref{thm:stretch_ergodic_integral}, for every $C>1$, we have that
\begin{equation}
\label{eq:D^Wgrowth}
\lim_{t \to \infty} \mu 
\left \{ \left\vert  \int_0^{\overline {\tau}(x,t)}\alpha^\perp \circ \phi^X_r (x) \diff r \right\vert < C\right\}=\lim_{t \to \infty} \mu \left\{ \left\vert  \int_0^t(\alpha^\perp/ {\overline \alpha}) \circ \phi^U_r (x) \diff r \right\vert < C\right\} =   0.
\end{equation}
hence there exists $T_0 > 0$ such that for all $t \geq T_0$ we have
$$
\mu \left\{ x \in M :\left\vert  \int_0^{\overline {\tau}(x,t)}\alpha^\perp \circ \phi^X_r (x) \diff r \right\vert  \leq
{\hat K}^{(4)}(\alpha)  \,C \right\} \leq \varepsilon,
$$
which  by \eqref{eq:perp_bound_2} implies that, for all $t\geq T_0$, we have
$$
\mu \left\{x\in M :  \sum_{{\bf v}\in \Lambda^\ast} \vert D^{\bf v}_t(x)\vert <C \right\}  =  \mu \left( \hat {\mathcal B} _t (C)  \right) \leq \varepsilon\,.
$$
\end{proof}

We then prove that $D_t$ is comparable with $\sum_{{{\bf v}}\in \Lambda^\ast} \vert D^{\bf v}_t\vert$ on sets of large measure (Lemma~\ref{lemma:D_t_comparison} below), so that we can then deduce that it also grows in measure. To do so, we estimate the oscillations of the function $\widetilde{\tau}$ along the orbits of the flow $\phi^Z$ and show a \emph{quasi-invariance} result (Lemma~\ref{lemma:quasi_invariance} below) which shows that the sum of the moduli of the components $D^{\bf v}_t(x)$  changes at most by a fixed multiplicative constant along any bounded orbit segment of  $\phi^Z$ (in addition to 
Lemma~\ref{lemma:D_t_comparison}, Lemma~\ref{lemma:quasi_invariance} is also used later in Lemma~\ref{lemma:distortion} to control \emph{distorsion}). 

\begin{lemma}[Quasi-invariance]  
\label{lemma:quasi_invariance} There exists a constant $C_X(\alpha) >1$ such that, for all 
$x\in M$ and for all $s \in [-1,1]$ and for all $t\geq 0$, we have
$$
 \sum_{{\bf v} \in \Lambda^\ast} \vert D^{\bf v}_t( \phi^Z_s(x)) \vert
 \leq  C_X(\alpha) \sum_{{\bf v} \in \Lambda^\ast} \vert D^{\bf v}_t(x) \vert \,.
$$
\end{lemma}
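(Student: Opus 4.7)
The plan is to derive the estimate as a direct consequence of the differential inequality \eqref{eq:D_diff_bound} already established in the proof of Lemma~\ref{lemma:sumD_t_grows}, specialized to $W = Z \in [Z]_\Gamma$, combined with Grönwall's inequality and the equivalence of $\ell^1$ and $\ell^2$ norms on the finite-dimensional space indexed by the (finite) set of non-zero Fourier components of the relative trigonometric polynomial $\alpha^\perp$.

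More precisely, I would set $\psi_{x,t}(s) := \sum_{{\bf v} \in \Lambda^\ast} |D^{\bf v}_t(\phi^Z_s(x))|^2$ and first observe that the derivation leading to \eqref{eq:D_diff_bound} bounds $|\psi'_{x,t}(s)|$ (not only $\psi'_{x,t}(s)$) by $K'(\alpha)\,\psi_{x,t}(s)$, since the Cauchy-Schwarz step used there controls the absolute value of the scalar product. Applying the differential form of Grönwall's inequality in both time directions on the interval $[-1,1]$ then yields
\begin{equation*}
\psi_{x,t}(s) \leq e^{K'(\alpha)|s|}\,\psi_{x,t}(0) \leq e^{K'(\alpha)}\,\psi_{x,t}(0), \qquad s \in [-1,1].
\end{equation*}
Letting $N$ denote the (finite) cardinality of the set of indices ${\bf v} \in \Lambda^\ast$ such that $\alpha_{\bf v} \not\equiv 0$, Cauchy-Schwarz gives both $\sum_{\bf v} |D^{\bf v}_t(\phi^Z_s(x))| \leq \sqrt{N}\,\psi_{x,t}(s)^{1/2}$ and $\psi_{x,t}(0)^{1/2} \leq \sum_{\bf v} |D^{\bf v}_t(x)|$, so chaining the inequalities produces
\begin{equation*}
\sum_{\bf v} |D^{\bf v}_t(\phi^Z_s(x))| \leq \sqrt{N}\,e^{K'(\alpha)/2}\,\sum_{\bf v} |D^{\bf v}_t(x)|,
\end{equation*}
which is the claim with $C_X(\alpha) := \sqrt{N}\,e^{K'(\alpha)/2}$.

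There is no real obstacle here since the main analytic input, the differential inequality \eqref{eq:D_diff_bound}, is already in place; the only point that deserves care is the observation that the bound on the derivative of $\psi_{x,t}$ holds in absolute value (so that Grönwall can be run for negative $s$ as well), and that passing from the $\ell^2$-type quantity $\psi_{x,t}$ to the $\ell^1$-type quantity appearing in the statement costs only a harmless factor depending on the finite degree of $\alpha^\perp$ as a trigonometric polynomial with respect to $[Z]_\Gamma$.
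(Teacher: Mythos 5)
Your argument is correct: the derivation of \eqref{eq:D_diff_bound} in the proof of Lemma~\ref{lemma:sumD_t_grows} does indeed bound the \emph{absolute value} of $\frac{\diff}{\diff s}\sum_{\bf v}\vert D^{\bf v}_t\circ\phi^W_s(x)\vert^2$ (the Cauchy--Schwarz step controls the modulus of the real part), it is valid for every $W\in[Z]_\Gamma$ and in particular for $W=Z$, and nothing in its proof depends on the conclusion of Lemma~\ref{lemma:sumD_t_grows}, so there is no circularity; two-sided Gr\"onwall on $[-1,1]$ plus the $\ell^1$--$\ell^2$ comparison on the finitely many non-zero modes then gives the claim with $C_X(\alpha)=\sqrt{N}\,e^{K'(\alpha)/2}$. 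This is, however, not quite the route the paper takes: the paper re-derives an a priori bound on $\vert\frac{\diff}{\diff s}\widetilde{\tau}(\phi^Z_s(x),t)\vert$, applies Gr\"onwall to the \emph{oscillation of $\widetilde\tau$} along the $\phi^Z$-orbit (yielding \eqref{eq:taudiff}), and then uses that each $Z\alpha_{\bf v}$ is a $\phi^Z$-eigenfunction to compare $D^{\bf v}_t(\phi^Z_s(x))$ with $D^{\bf v}_t(x)$ term by term. Your version is more economical, since it recycles an inequality already in place instead of repeating the $\widetilde\tau$ computation, at the harmless cost of the dimensional factor $\sqrt N$; the paper's version, besides giving a constant of the form $1+O(e^{C\vert s\vert}-1)$ which is close to $1$ for small $s$, produces the intermediate estimate \eqref{eq:taudiff} on $\vert\widetilde{\tau}(\phi^Z_s(x),t)-\widetilde{\tau}(x,t)\vert$, which is not just a byproduct: it is invoked again in the proof of Lemma~\ref{lemma:D_t_comparison}. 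So if you adopted your shortcut in the paper, you would still need to establish \eqref{eq:taudiff} separately for the later comparison argument; as a proof of Lemma~\ref{lemma:quasi_invariance} alone, your argument is complete.
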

\begin{proof}
Let us  first estimate the oscillation of the function
$\widetilde{\tau}$ along the orbits of the flow $\phi^Z$.  From the definition \eqref{def:tildetau} of $\widetilde{\tau}$, by differentiation, we have
$$
\frac{\diff}{\diff s} \widetilde{\tau}(\phi^Z_s(x), t) = Z \widetilde{\tau}(\phi^Z_s(x), t) =
-[ \alpha \circ \phi^X_{\widetilde{\tau}(\phi^Z_s(x),t) } (\phi^Z_s(x))]^{-1} \int_0^{\widetilde{\tau}(\phi^Z_s(x),t)}  Z\alpha^\perp \circ \phi^X_r (\phi^Z_s(x)) \diff r\,.
$$
Thus, we derive the following a priori estimate: there exists a constant $C'_X(\alpha)>1$ such that 
$$
\left\vert \frac{\diff}{\diff s} \widetilde{\tau}(\phi^Z_s(x), t) \right\vert \leq  C'_X(\alpha) \left( \vert \widetilde{\tau}(\phi^Z_s(x), t)-  \widetilde{\tau}(x, t)\vert   + \sum_{{{\bf v}}\in \Lambda^\ast} \vert D^{\bf v}_t(x)  \vert  \right) \,.
$$
By Gr\"onwall inequality the above a priori bound implies that, for all $s\in \R$, we have
\begin{equation}\label{eq:taudiff}
\vert \widetilde{\tau}(\phi^Z_s(x), t)-  \widetilde{\tau}(x, t)\vert  \leq (e^{C'_X(\alpha) \vert s \vert} -1)
  \sum_{{{\bf v}}\in \Lambda^\ast} \vert D^{\bf v}_t(x)\vert \,.
\end{equation}
Since for every ${\bf v} \in \Lambda^\ast$, the function $Z\alpha_{\bf v}$ is
an eigenfunction for $\phi^Z$ (which commutes with $\phi^X$), it follows that we have the bound
$$
\begin{aligned}
\vert D^{\bf v}_t( \phi^Z_s(x)) \vert &\leq \left\vert   \int_{\widetilde{\tau}(x, t)}^{\widetilde{\tau}(\phi^Z_s(x), t) } Z\alpha_{\bf v} \circ \phi^X_r \circ \phi_s^Z(x) \diff r      \right\vert  + \left\vert   \int_0^{\widetilde{\tau}(x, t)} Z\alpha_{\bf v} \circ \phi^X_r \circ \phi_s^Z(x) \diff r  \right\vert  \\
& \leq  \max \vert Z\alpha_{\bf v} \vert  (e^{C'_X(\alpha) \vert s \vert} -1) 
  \sum_{{{\bf v}}\in \Lambda^\ast} \vert D^{\bf v}_t(x)\vert + \vert D_t^{\bf v} (x) \vert\,,
\end{aligned}
$$
from which the statement easily follows.
\end{proof}

In the following,  for any $C>1$  and for all $t\geq 0$,  we recall  the notation
\begin{equation}
\label{eq:B_t_C_set}
\mathcal B_t(C) := \left\{x\in M :\,  \sum_{{\bf v} \in \Lambda^\ast} \vert D^{\bf v}_t(x) \vert \leq C \right\} \,.
\end{equation}

\begin{lemma}[Comparison]  
\label{lemma:D_t_comparison}
There exist constants $\Delta(\alpha)>0$, $d_m>0$  and, for every $\delta \in (0,1)$, 
there exists a constant $C_\delta:=C_\delta(\alpha)>1$ such that the following holds. For every $t\geq 0$ and every $\varepsilon>0$, whenever $\mu (\mathcal B_t(C_\delta)) <\varepsilon$, then we also have 
$$
\mu \left\{ x\in  M :\,   \left\vert D_t(x) \right\vert \leq \delta \sum_{{\bf v} \in \Lambda^\ast} \left\vert D^{\bf v}_t(x) \right\vert \right\} 
\leq \Delta(\alpha) \delta^{d_m} + \varepsilon \,.
$$ 	
\end{lemma}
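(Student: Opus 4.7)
The goal is to reduce this statement to Lemma \ref{thm:sublevel_sets_polynomials}, which has exactly the same structural form but applies to genuine relative trigonometric polynomials. The plan is therefore to approximate $D_t$ by a function $\widehat{D}_t$ that truly belongs to $\bigoplus_{{\bf v}\neq 0} H_{{\bf v}}([Z]_\Gamma)$ and to apply the Lemma to this auxiliary function.

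Concretely, I will work with the cocycle $\widetilde{\tau}_Z(x,t)$ introduced in the proof of Lemma \ref{lemma:sumD_t_grows}, namely the $\Phi^{[Z]_\Gamma}$-invariant average of $\widetilde{\tau}(\cdot,t)$ on the orbit of $x$ under the central torus. Setting
$$
\widehat{D}_t^{\bf v}(x) := -\int_0^{\widetilde{\tau}_Z(x,t)} Z\alpha^{\perp}_{\bf v}\circ \phi^X_r(x)\,\diff r, \qquad \widehat{D}_t := \sum_{{\bf v}\neq 0} \widehat{D}_t^{\bf v},
$$
a direct computation using that $\widetilde{\tau}_Z$ is $\Phi^{[Z]_\Gamma}$-invariant, that $\phi^X$ commutes with $\Phi^{[Z]_\Gamma}$, and that $Z\alpha^{\perp}_{\bf v} \in H_{{\bf v}}([Z]_\Gamma)$, shows that $\widehat{D}_t^{\bf v} \in H_{{\bf v}}([Z]_\Gamma)$. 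Hence $\widehat{D}_t$ is a relative trigonometric polynomial of degree $m$ without constant term in the fibers, and Lemma \ref{thm:sublevel_sets_polynomials} applies to it: for the universal constants $\Delta_m, d_m$ and any $\varepsilon',C'>0$ with $\mu\{\sum_{\bf v}|\widehat{D}_t^{\bf v}|\leq C'\}\leq \varepsilon'$,
$$
\mu\bigl\{|\widehat{D}_t|\leq \eta \textstyle\sum_{\bf v}|\widehat{D}_t^{\bf v}|\bigr\}\leq \Delta_m\eta^{d_m}+\varepsilon' \quad\text{for all }\eta>0.
$$

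The next step is to compare $D_t$ with $\widehat{D}_t$ and $\sum|D_t^{\bf v}|$ with $\sum|\widehat{D}_t^{\bf v}|$. Following the pattern of the proof of Lemma \ref{lemma:quasi_invariance} but taking derivatives along any rational $W\in [Z]_\Gamma$ (using the a priori bound \eqref{eq:tau_diff_bound}--\eqref{eq:dir_der_bound} combined with Gr\"onwall along the compact fiber), one obtains the pointwise oscillation estimate $|\widetilde{\tau}(x,t)-\widetilde{\tau}_Z(x,t)|\leq K(\alpha)\sum_{\bf v}|D_t^{\bf v}(x)|$, hence $|D_t^{\bf v} - \widehat{D}_t^{\bf v}|\leq K_1(\alpha)\sum_{{\bf v}'}|D_t^{{\bf v}'}|$ and two-sided comparability of $\sum_{\bf v}|D_t^{\bf v}|$ with $\sum_{\bf v}|\widehat{D}_t^{\bf v}|$. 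Combined with the quasi-invariance of Lemma \ref{lemma:quasi_invariance}, this transfers the hypothesis $\mu(\mathcal{B}_t(C_\delta))\leq \varepsilon$ into a corresponding bound on $\mu\{\sum_{\bf v}|\widehat{D}_t^{\bf v}|\leq C'\}$ for a suitable $C'=C'(\alpha,C_\delta)$.

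The main obstacle is the \emph{transfer} of the sublevel set estimate from $\widehat{D}_t$ back to $D_t$: the naive bound $|D_t-\widehat{D}_t|\leq K_1(\alpha)\sum_{\bf v}|D_t^{\bf v}|$ is not small in comparison to the threshold $\delta \sum_{\bf v}|D_t^{\bf v}|$, so one cannot directly conclude. The resolution is to exploit the freedom in choosing $C_\delta$ large as a function of $\delta^{-1}$: on the complement of $\mathcal{B}_t(C_\delta)$, where $\sum_{\bf v}|D_t^{\bf v}|>C_\delta$, one iterates the comparison on nested sets $\hat{\mathcal{B}}_t(C)$ (in the spirit of the proof of Lemma \ref{lemma:sumD_t_grows}) to obtain a refined control of $|\widetilde{\tau}-\widetilde{\tau}_Z|$ whose relative magnitude with respect to $\sum_{\bf v}|D_t^{\bf v}|$ becomes small. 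Plugging the refined control into Lemma \ref{thm:sublevel_sets_polynomials} with $\eta = c(\alpha)\delta$ and absorbing the comparison constants $K_j(\alpha)^{d_m}$ into the final constant $\Delta(\alpha)$, the desired conclusion $\mu\{|D_t|\leq \delta\sum_{\bf v}|D_t^{\bf v}|\}\leq \Delta(\alpha)\delta^{d_m}+\varepsilon$ follows.
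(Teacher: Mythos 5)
Your plan --- replace $D_t$ by a genuine relative trigonometric polynomial $\widehat{D}_t$ built from the $\Phi^{[Z]_\Gamma}$-invariant time $\widetilde{\tau}_Z$, and then apply Lemma~\ref{thm:sublevel_sets_polynomials} to $\widehat{D}_t$ globally --- is structurally different from what the paper does. The paper instead fixes $x$, introduces the one-variable function $f_{x,t}(s) = \int_0^{\widetilde{\tau}(x,t)} Z\alpha^\perp \circ \phi^X_r \circ \phi^Z_s(x)\,\diff r$ (which \emph{is} a trigonometric polynomial in $s$ precisely because the upper limit of integration is frozen at $\widetilde{\tau}(x,t)$ rather than $\widetilde{\tau}(\phi^Z_s(x),t)$), proves an inclusion of bad parameter sets $s\in[-1,1]$ for each $x\notin\mathcal B_t(C_\delta)$, applies the classical sublevel estimate orbit-by-orbit, and concludes by a Fubini argument along $\phi^Z$-orbit segments. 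Your version, if it could be completed, would bypass the Fubini step.

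However, the central step in your scheme --- controlling $|D_t-\widehat{D}_t|$ at scale $\delta\sum_{\bf v}|D^{\bf v}_t|$ --- has a genuine gap, and the proposed remedy does not fill it. You correctly observe that the naive bound $|D_t-\widehat{D}_t|\leq K_1(\alpha)\sum_{\bf v}|D^{\bf v}_t|$ is useless here (it is $K_1/\delta$ times the relevant threshold), but ``iterating the comparison on nested sets $\hat{\mathcal B}_t(C)$'' does not change this. The saturation argument in Lemma~\ref{lemma:sumD_t_grows} propagates an \emph{upper} bound on $\sum_{\bf v}|D^{\bf v}_t|$ across a compact fiber by Gr\"onwall; it has no bearing on the \emph{relative} size of $|\widetilde\tau-\widetilde\tau_Z|$ versus $\sum_{\bf v}|D^{\bf v}_t|$, which is a genuine order-one ratio and does not shrink on the complement of $\mathcal B_t(C_\delta)$. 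For the same reason, the claimed two-sided comparability of $\sum_{\bf v}|D^{\bf v}_t|$ and $\sum_{\bf v}|\widehat{D}^{\bf v}_t|$ does not follow from the naive bound either, since summing $N$ errors each of size $K_1\sum_{\bf v}|D^{\bf v}_t|$ can dominate $\sum_{\bf v}|D^{\bf v}_t|$ when $NK_1>1$.

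The missing ingredient is the unique-ergodicity estimate that the paper introduces at the start of its proof, namely formula~\eqref{eq:ue}: since each $Z\alpha^\perp_{\bf v}$ has zero $\mu$-average, its Birkhoff integrals along $\phi^X$ grow sublinearly, so for every $\delta'>0$ there is $\overline{C}_{\delta'}(\alpha)$ with $\bigl\lvert\int_0^L Z\alpha^\perp_{\bf v}\circ\phi^X_r(x)\,\diff r\bigr\rvert \leq \delta'\,|L| + \overline{C}_{\delta'}(\alpha)$ uniformly in $x$ and $L$. Applied to $D^{\bf v}_t-\widehat{D}^{\bf v}_t = -\int_{\widetilde\tau_Z(x,t)}^{\widetilde\tau(x,t)} Z\alpha^\perp_{\bf v}\circ\phi^X_r(x)\,\diff r$ together with the quasi-invariance bound $|\widetilde\tau-\widetilde\tau_Z|\leq K(\alpha)\sum_{\bf v}|D^{\bf v}_t|$, this yields $|D^{\bf v}_t-\widehat{D}^{\bf v}_t|\leq \delta' K(\alpha)\sum_{\bf v}|D^{\bf v}_t| + \overline{C}_{\delta'}(\alpha)$; choosing $\delta'$ small relative to $\delta$ and $C_\delta$ large enough to absorb $\overline{C}_{\delta'}(\alpha)$ would then make both the comparability and the transfer to $\widehat D_t$ go through. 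Without this sublinearity input, the error term is of the wrong order and the argument cannot close.
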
 
\begin{proof}
By unique ergodicity of $\phi^X$,  since $\alpha$ is a relative trigonometric polynomial (and so it has
finitely many non-zero coefficients $\alpha_{\bf v}$), for every $\delta \in (0,1)$ there exists a constant $\overline{C}_\delta (\alpha)>1$ such that, for all $x\in M$ and for all $s\in [-1, 1]$, 
\begin{equation}\label{eq:ue}
\sum_{{\bf v} \in \Lambda^\ast}\left\vert \int_{\widetilde{\tau}(x, t)}^{\widetilde{\tau}(\phi^Z_s(x), t) } Z\alpha_{\bf v}\circ \phi^X_r (\phi_s^Z(x)) \diff r \right\vert< \delta(e^{C'_X(\alpha)} -1)^{-1} \vert \widetilde{\tau}(\phi^Z_s(x), t) - \widetilde{\tau}(x, t) \vert     + \overline{C}_\delta (\alpha) \,.
\end{equation}
Thus by the estimate in formula \eqref{eq:taudiff} on $\vert \widetilde{\tau}(\phi^Z_s(x), t) - \widetilde{\tau}(x, t) \vert$, it follows that, for any $x\in M$ and any $s\in [-1,1]$, we have
\begin{equation}\label{eq:diff}
\left\vert \int_{\widetilde{\tau}(x, t)}^{\widetilde{\tau}(\phi^Z_s(x), t) } Z\alpha^\perp \circ \phi^X_r \circ \phi_s^Z(x) \diff r \right\vert<  \delta \sum_{{{\bf v}}\in \Lambda^\ast} \vert D^{\bf v}_t(x)\vert   + \overline{C}_\delta(\alpha)\, .
\end{equation}
We now choose $C_\delta:=C_\delta(\alpha)$ to be any constant bigger than  $\overline{C}_\alpha(\delta)/\delta $, so that if $x$ does not belong to the set 
$\mathcal B_t(C_\delta)$ then, recalling its definition in \eqref{eq:B_t_C_set}, we have
\begin{equation}
\label{eq:B_t_C_bound}
 \delta \sum_{{{\bf v}}\in \Lambda^\ast} \vert D^{\bf v}_t(x)\vert \geq \delta C_\delta \geq \overline{C}_\delta(\alpha).
\end{equation}
 Then, we claim that there exists a constant $C'_X(\alpha)>1$ such that,  for any $x\in M\setminus \mathcal B_t(C_\delta) $, we have
$$
\left\{ s\in [-1, 1]:\,   \left\vert D_t\left(\phi^Z_s(x)\right) \right\vert < \delta \sum_{{{\bf v}}\in \Lambda^\ast} \vert D^{\bf v}_t(\phi^Z_s(x))\vert\right\}\subset 
\left\{s \in [-1, 1]:\,  \vert f_{x,t}(s) \vert < C_X'(\alpha)
 \delta \sum_{{{\bf v}}\in \Lambda^\ast} \vert D^{\bf v}_t(x)\vert \right\}\,.
 $$
where 
$$f_{x,t}(s):= \int_0^{\widetilde{\tau}(x, t)}  Z\alpha^\perp \circ  \phi^X_r \circ \phi^Z_s(x) \diff r .$$
Indeed,  if $s$ belongs to the set on the LHS in the above inclusion, using, in order, the definition of~$f_{x,t}$,  the definition of $D_t$ (see Remark~\ref{rk:equivalent}) and  formula~\eqref{eq:diff}, the definition of the set and formula~\eqref{eq:B_t_C_bound}, and finally Lemma~\ref{lemma:quasi_invariance}, we get
\begin{align*}
\vert f_{x,t}(s)\vert & = \left\vert \int_0^{\widetilde{\tau}(x, t)}  Z\alpha^\perp \circ  \phi^X_r \circ \phi^Z_s(x)\right\vert \leq \vert D_t(\phi^Z_s(x)) \vert  +   \delta  \sum_{{{\bf v}}\in \Lambda^\ast} \vert D^{\bf v}_t(x)\vert + \overline{C}_\delta(\alpha)  \\ & \leq   \delta  \sum_{{{\bf v}}\in \Lambda^\ast} \vert D^{\bf v}_t(\phi^Z_s(x))\vert + 2 \delta  \sum_{{{\bf v}}\in \Lambda^\ast} \vert D^{\bf v}_t(x)\vert\leq  (C_X(\alpha)+2)\delta  \sum_{{{\bf v}}\in \Lambda^\ast} \vert D^{\bf v}_t(x)\vert ,
\end{align*}
which shows that, if we set $C'_X(\alpha) := C_X(\alpha)+2$,  $s$ belongs to the set on the RHS. This concludes the proof of the claim.

\smallskip
\noindent Notice now that the function $f_{x,t}(s):= \int_0^{\widetilde{\tau}(x, t)}  Z\alpha^\perp \circ  \phi^X_r \circ \phi^Z_s(x) \diff r$ is a  trigonometric polynomial in the variable $s\in \R$ (since we removed the dependence from $s$ in the integration limits and $\alpha$ -and hence $Z\alpha^\perp$- are relative trigonometric polynomials).  Thus, by the basic properties of (level sets of) trigonometric polynomials (see also Lemma~\ref{thm:sublevel_sets_polynomials} and its proof), for constants $\Delta_m$, $d_m>0$ depending only on the degree $m$,  for all $x\in M$, 
\begin{equation}\label{trig:estimate}
\text{\rm Leb} \left\{s\in [-1, 1]:\, \vert f_{x,t}(s)\vert  < C'_X(\alpha)
 \delta \sum_{{{\bf v}}\in \Lambda^\ast} \vert D^{{{\bf v}}}_t(x)\vert\right\} \leq \Delta_m (C'_X(\alpha) \delta)^{d_m} = \Delta(\alpha)  \delta^{d_m}  \,,
 \end{equation}
where we defined $\Delta(\alpha):= \Delta_m C'_X(\alpha)^{d_m}$. 
 Hence, by the inclusion proved above we derive that, whenever $x\in M\setminus \mathcal B_t(C_\delta)$, we have
 \begin{equation}\label{eq:meas_est}
\text{\rm Leb} \left\{ s\in [-1, 1]:\,   \left\vert D_t\left(\phi^Z_s(x)\right) \right\vert < \delta \sum_{{{\bf v}}\in \Lambda^\ast} \vert D^{\bf v}_t(\phi^Z_s(x))\vert\right\} \leq \Delta(\alpha)  \delta^{d_m}  \,.
 \end{equation} 
We now show that these estimates allow to conclude by a Fubini-type argument. Consider a partition of $M$ as a union of orbit segments $I$ of unit length of the flow $\phi^Z$. The union $M'$ of all partition intervals~$I$ such that $I \cap (M\setminus \mathcal B_t(C_\delta))= \emptyset$ is a subset of the set $\mathcal B_t(C_\delta)$, hence by hypothesis it has volume at most $\varepsilon>0$.  Let us then consider a partition interval $I$ such that $I \cap (M\setminus \mathcal B_t(C_\delta))\not= \emptyset$ and let $x$ be any point in the intersection.  Since $x \in M\setminus\mathcal B_t(C_\delta)$ and since $I \subset \{\phi^Z_s(x) \vert  s\in [-1, 1]\}$, by the bound in 
formula \eqref{eq:meas_est} it follows that 
 $$
 \text{\rm Leb}_I \left\{ y \in I :\,   \left\vert D_t(y) \right\vert < \delta \sum_{{{\bf v}}\in \Lambda^\ast} \vert D^{\bf v}_t(y)\vert\right\} \leq \Delta(\alpha)  \delta^{d_m} \,.
 $$
 By Fubini's theorem, on the union $M''$ of all partitions intervals $I$ such that $I \cap (M\setminus \mathcal B_t(C_\delta))\not= \emptyset$ we derive the bound
 $$
\mu \left\{ x \in M''  :\,   \left\vert D_t(x) \right\vert < \delta \sum_{{{\bf v}}\in \Lambda^\ast} \vert D^{\bf v}_t(x)\vert\right\} \leq \Delta(\alpha)  \delta^{d_m} \,.
 $$
Since $M= M' \cup M''$, with $\mu(M') <\epsilon$, as remarked above,  the statement follows. 
\end{proof}

\subsubsection{Distortion bound}\label{sec:distortion}
Finally, for the proof of mixing (which will be given in the next subsection~\ref{sec:finalmixing}), we also need to control the \emph{distortion} of the pushforwards through estimates on $ZD_t$.  

\begin{lemma}[Distortion control]\label{lemma:distortion}
Let $D_t(x)$ be the function defined in \eqref{eq:time_change_int}. There exists a constant~$C'_X(\alpha)>1$ such that the following hold:
\begin{itemize}
\item[(i)] For every $x \in M$ and for every $t\geq 0$,
$$
\vert Z D_t(x)  \vert \leq C'_X(\alpha) \sum_{{\bf v} \in \Lambda^\ast} \vert D^{\bf v}_t(x) \vert \,.
$$	
\item[(ii)]
Let $\Delta(\alpha), d_m>0$ be constants as in Lemma~\ref{lemma:D_t_comparison}. For every $\delta \in (0,1)$,
there exist constants $C'_\delta(\alpha)>1$ and $\sigma_0=\sigma_0(\delta)>0$ such that,  whenever for some $t\geq 0$
 and $\varepsilon>0$, we have
$$\mu \left\{x\in M :\, \sum_{{\bf v} \in \Lambda^\ast} \vert D^{\bf v}_t(x) \vert \leq C'_\delta(\alpha) \right\} \,<\,\varepsilon\,,$$ 
then we also have
$$
\mu \left\{ x\in  M :\,   \vert Z D_t\left(\phi_s^Z(x)\right) \vert \leq  \frac{C'_X(\alpha)}{\delta} \vert D_t\left(\phi_s^Z(x)\right) \vert \, \, \textrm{for\, all}\,\, s\in [0,\sigma_0]\right\} 
\geq 1- \Delta(\alpha) \delta^{d_m} - \varepsilon \,.
$$
\end{itemize}
\end{lemma}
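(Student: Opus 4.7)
My plan is to prove the two parts separately: part (i) will follow from a direct differentiation of the integral defining $D_t$, while for part (ii) I will reduce to the argument of Lemma~\ref{lemma:D_t_comparison} and add a ``thickening'' estimate for sublevel sets of one-dimensional trigonometric polynomials, which is the main new ingredient.

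\textbf{Part (i).} I differentiate $D_t(x)=-\int_0^{\widetilde\tau(x,t)} Z\alpha^\perp\circ\phi^X_r(x)\,dr$ along $Z$. Since $Z$ is central it commutes with $X$, so the $Z$-derivative passes through $\phi^X_r$ under the integral. Two contributions arise: the boundary term, involving $Z\widetilde\tau(x,t)$, which by~\eqref{eq:Wtau} with $W=Z$ equals $D_t(x)/[\alpha\circ\phi^X_{\widetilde\tau(x,t)}(x)]$ (using $Z\overline{\alpha}=0$, hence $Z\alpha=Z\alpha^\perp$); and the integrand term, which, using $Z\alpha_{\bf v}=2\pi i\langle{\bf v},Z\rangle\alpha_{\bf v}$, evaluates to $2\pi i\sum_{\bf v}\langle{\bf v},Z\rangle D^{\bf v}_t(x)$. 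Taking absolute values and bounding $|D_t(x)|\leq\sum_{\bf v}|D^{\bf v}_t(x)|$ then yields part (i), with $C'_X(\alpha)$ depending only on $\min\alpha$, $\max|Z\alpha^\perp|$ and $\max_{\bf v}|\langle{\bf v},Z\rangle|$.

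\textbf{Part (ii).} By part (i), the distortion inequality $|ZD_t(\phi^Z_s x)|\leq (C'_X(\alpha)/\delta)|D_t(\phi^Z_s x)|$ is implied by
\[
(\star)\qquad |D_t(\phi^Z_s x)|\;\geq\;\delta\sum_{\bf v}|D^{\bf v}_t(\phi^Z_s x)|,
\]
so it suffices to bound the measure of those $x$ for which $(\star)$ fails for \emph{some} $s\in[0,\sigma_0]$. Picking $C'_\delta(\alpha)$ as in Lemma~\ref{lemma:D_t_comparison}, the chain of inequalities in that proof shows that, for $x\notin\mathcal{B}_t(C'_\delta)$, failure of $(\star)$ at $\phi^Z_s(x)$ forces $|f_{x,t}(s)|<C'_X(\alpha)\delta\sum|D^{\bf v}_t(x)|$, where $f_{x,t}(s):=\int_0^{\widetilde\tau(x,t)} Z\alpha^\perp\circ\phi^X_r\circ\phi^Z_s(x)\,dr$ is, for fixed $x,t$, a one-dimensional real trigonometric polynomial in $s$ of degree at most $m$, with frequencies $\langle{\bf v},Z\rangle$.

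\textbf{Main obstacle: the thickening step.} Whereas Lemma~\ref{lemma:D_t_comparison} needed only the pointwise estimate at $s=0$, I now need to exclude failure uniformly for $s\in[0,\sigma_0]$, and this is the main technical point. The crucial observation is that the sublevel set $S_x:=\{s\in[-1,1]:|f_{x,t}(s)|<C'_X(\alpha)\delta\sum|D^{\bf v}_t(x)|\}$ is a union of at most $K_m=O(m)$ intervals, because a one-dimensional trigonometric polynomial of degree $m$ has $O(m)$ sign changes. Combined with the sublevel measure bound $\text{Leb}(S_x)\leq\Delta(\alpha)\delta^{d_m}$ from~\eqref{trig:estimate}, the set of $s\in[-1,1]$ such that $[s,s+\sigma_0]\cap S_x\neq\emptyset$ has measure at most $\Delta(\alpha)\delta^{d_m}+K_m\sigma_0$. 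Choosing $\sigma_0(\delta):=\delta^{d_m}/K_m$ keeps this of order $\delta^{d_m}$; the Fubini partitioning argument concluding the proof of Lemma~\ref{lemma:D_t_comparison} then yields the desired global measure bound, up to absorbing a universal constant into $\Delta(\alpha)$.
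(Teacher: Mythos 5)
Your Part~(i) matches the paper's proof essentially verbatim: differentiate $D_t=-\int_0^{\widetilde\tau}Z\alpha^\perp\circ\phi^X_r\,dr$ along $Z$, use \eqref{eq:Wtau} for the boundary term, and bound the integrand term by $\sum_{\bf v}|D^{\bf v}_t|$.

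Part~(ii), however, takes a genuinely different route. The paper does \emph{not} prove the comparison $|D_t|\geq\delta\sum_{\bf v}|D^{\bf v}_t|$ at all the shifted points $\phi^Z_s(x)$; instead it only needs the comparison at $s=0$ (from Lemma~\ref{lemma:D_t_comparison}), then chains it with Part~(i) and quasi-invariance (Lemma~\ref{lemma:quasi_invariance}) to bound $|ZD_t\circ\phi^Z_s(x)|\leq C(\alpha)C_X(\alpha)\delta^{-1}|D_t(x)|$ for $s\in[-1,1]$, and finally uses a Mean-Value-Theorem/Gr\"onwall argument with $\sigma_0\sim\delta$ to get $|D_t\circ\phi^Z_s(x)|\geq\tfrac12|D_t(x)|$, transferring the denominator from $x$ to $\phi^Z_s(x)$. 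You instead bound directly the measure of $x$ for which $(\star)$ fails at \emph{some} $s\in[0,\sigma_0]$ by thickening the sublevel set $S_x$ of the trigonometric polynomial $f_{x,t}$, which buys you an argument that bypasses the MVT step at the cost of needing an additional structural ingredient: a uniform bound $K_m$ on the number of connected components of $\{|f_{x,t}|<c\}$. Both routes are valid, and your observation that $(\star)$ at $\phi^Z_s(x)$ together with Part~(i) (applied at $\phi^Z_s(x)$) already closes the estimate is clean.

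Two technical cautions. First, the interval-count: $f_{x,t}(s)=\sum_{\bf v}c_{\bf v}e^{2\pi i\langle{\bf v},Z\rangle s}$ has \emph{real, not integer}, frequencies $\langle{\bf v},Z\rangle$, so ``a trig polynomial of degree $m$ has $O(m)$ sign changes'' is not the right justification; you need a zero-count for real exponential sums (e.g.\ Langer/Tur\'an, or the extended Chebyshev-system property of $\{e^{i\lambda_j s}\}$) applied to $f_{x,t}^2-c^2$, giving a bound in terms of the number $N$ of nonzero Fourier modes and of $\max_{\bf v}|\langle{\bf v},Z\rangle|$ (both of which are fixed once $\alpha$ is fixed), not in terms of the abstract degree $m$. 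This makes $K_m$ a constant $K(\alpha)$; the argument still works, but the justification as written is inaccurate. Second, the thickening produces $(\Delta(\alpha)+1)\delta^{d_m}+\varepsilon$ rather than $\Delta(\alpha)\delta^{d_m}+\varepsilon$; since the statement fixes $\Delta(\alpha)$ as the constant of Lemma~\ref{lemma:D_t_comparison}, you should either renormalize $\sigma_0$ or note explicitly that you are proving the lemma with a larger (but still $\alpha$-dependent, $t$-independent) constant, which is harmless for its use in \S\ref{sec:finalmixing}.
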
 

\begin{proof}
Let us recall that, by Remark~\ref{rk:equiv_tautilde}, we have 
$$
D_t(x) =- \int_0^{\widetilde{\tau}(x,t)}  Z\alpha^\perp \circ \phi^X_r(x) \diff r 
$$
and that, by formula \eqref{eq:Wtau} (for $W=Z$), we also have
$$
Z  \widetilde{\tau}(x,t) = -[ \alpha \circ \phi^X_{\widetilde{\tau}(x,t) } (x)]^{-1} \int_0^{\widetilde{\tau}(x,t)}  Z\alpha^\perp \circ \phi^X_r(x) \diff r = [ \alpha \circ \phi^X_{\widetilde{\tau}(x,t) } (x)]^{-1} D_t (x) \,.
$$
From the above formula it follows that
$$
ZD_t(x)= - \int_0^{\widetilde{\tau}(x,t)}  Z^2\alpha^\perp \circ \phi^X_r(x) \diff r - 
Z  \widetilde{\tau}(x,t)  \,Z\alpha^\perp \circ \phi^X_{\widetilde{\tau}(x,t) } (x)\,.
$$
Since the functions $\alpha$ and $Z\alpha^\perp$ are bounded, there exists a constant $C_X(\alpha)>1$
such that 
$$
\begin{aligned}
\vert  D_t(x) \vert &\leq C_X(\alpha)  \sum_{{\bf v} \in \Lambda^\ast} \vert D^{\bf v}_t(x) \vert  \,, \\
\vert ZD_t(x) \vert &\leq C_X(\alpha)  \sum_{{\bf v} \in \Lambda^\ast} \vert D^{\bf v}_t(x) \vert  +
\frac{ \max \vert Z\alpha^\perp\vert}{\min \alpha} \vert  D_t(x) \vert \,.
\end{aligned} 
$$
Combining the previous two estimates,  the proof of Part $(i)$ of the statement is concluded.

\smallskip

Let us now prove Part $(ii)$. Fix $\delta \in (0,1)$. Let $C_\delta (\alpha)$ be the constant given by  Lemma \ref{lemma:D_t_comparison} and 
let $C'_\delta:=C'_\delta (\alpha) \geq C_\delta(\alpha)$.  Assume that $\mu(\mathcal B_{t}(C'_\delta)) < \varepsilon$, for some $t\geq 0$ and $\varepsilon >0$, and remark  that, since $C'_\delta \geq C_\delta(\alpha)$, the assumption of Lemma \ref{lemma:D_t_comparison} holds.  Thus, by  Part $(i)$  of this Lemma, by Lemma~\ref{lemma:quasi_invariance} and by Lemma \ref{lemma:D_t_comparison}, 
there exists a set $\mathcal B_{t,\delta} \supset \mathcal B_{t}(C'_\delta)$ of measure $\mu(\mathcal B_{t,\delta})\leq \Delta(\alpha)\delta^{d_m} + \varepsilon$,  
such that, for all $x\in M\setminus \mathcal B_{t,\delta}$ and  for all $s\in [-1,1]$,  we have
\begin{equation}\label{eq:lemma_distortion_1}
\vert Z D_t \circ \phi_s^Z(x) \vert \leq C(\alpha) \sum_{{{\bf v}}\in \Lambda^\ast} \vert D^{\bf v}_t \circ \phi^Z_s(x) \vert 
\leq C(\alpha) C_X(\alpha) \sum_{{{\bf v}}\in \Lambda^\ast} \vert D^{\bf v}_t(x)\vert
 \leq \frac{C(\alpha) C_X(\alpha)}{\delta} \vert D_t(x) \vert.
\end{equation}
Thus, let us define  $\sigma_0:=\sigma_0(\delta) \in (0,1)$ by setting $\sigma_0:= {\delta}/{\left(2 C(\alpha)C_X(\alpha)\right)}$. Thus, 
 by the Mean-Value Theorem and by the upper bound in formula~\eqref{eq:lemma_distortion_1}, 
 for all $x \notin  \mathcal B_{t,\delta}$ (i.e. on a set of measure at least $1- \Delta(\alpha)\delta^{d_m} - \varepsilon$), for all $s\in [0,\sigma_0]$,
$$
\vert D_t \circ \phi_s^Z(x) - D_t(x)\vert \leq \sigma_0 \cdot \max_{0\leq r\leq s} \vert Z D_t \circ \phi_r^Z(x) \vert \leq \sigma_0 \frac{C(\alpha) C_X(\alpha)}{\delta} \vert D_t(x) \vert \leq \frac{1}{2}  \vert D_t(x) \vert,
$$
from which it follows that 
$$
\vert D_t \circ \phi_s^Z(x)\vert\geq \left\vert\vert D_t(x)\vert - \vert D_t \circ \phi_s^Z(x) - D_t(x)\vert\right\vert \geq \vert D_t(x)\vert - \frac{ \vert D_t(x)\vert}{2} = \frac{ \vert D_t(x)\vert}{2},
$$
or, in other words, $\vert D_t(x)\vert \leq 2 \vert D_t \circ \phi_s^Z(x)\vert $. Combining this last inequality with the one in formula~\eqref{eq:lemma_distortion_1} and setting $C_X'(\alpha):= 2C(\alpha)C_X(\alpha)$ gives the conclusion.

\end{proof}

\subsubsection{Final mixing arguments}\label{sec:finalmixing}
By Lemma~\ref{lemma:correlationseasy} (or Lemma~\ref{lemma:correlations}) and Remark~\ref{rk:mixingconclusion}, in order to prove mixing it is sufficient to study integrals of the form 
$$
\int_0^{\sigma}  f \circ \phi^V_t\circ \phi^Z_s (x) \diff s
$$ 
for every $0<\sigma\leq \sigma_0$ (where $\sigma_0$ will be determined later). 
We write 
\begin{equation}
\label{eq:induction_case_2}
\begin{split}
&\int_0^{\sigma}  f \circ \phi^V_t\circ \phi^Z_s  \diff s = \int_0^{\sigma} \left( \frac{1+ D^2_t}{1+ D^2_t} \,\,
f \circ \phi^V_t \right ) \circ \phi^Z_s  \diff s \\ 
& \quad = \int_0^{\sigma} \left( \frac{ f \circ \phi^V_t}{1+ D^2_t} \right) \circ \phi^Z_s  \diff s  + 
\int_0^{\sigma} \left( \frac{D^2_t}{1+ D^2_t} \,\, f \circ \phi^V_t \right) \circ \phi^Z_s  \diff s\,,
\end{split}
\end{equation}
and estimate  separately the two terms on the right-hand side of the above formula. 

\smallskip
\noindent 
\emph{First term in the RHS of \eqref{eq:induction_case_2}.}
We will prove that for any fixed $\sigma > 0$, the first term on the right-hand side (RHS for short) of \eqref{eq:induction_case_2} converges to zero in measure, as needed to be able to apply Lemma~\ref{lemma:correlationseasy}.

\smallskip
\noindent By Proposition~\ref{prop:D_t_grows}, $1/ (1 + D_t^2) \to 0$ in measure.
In particular, for every $s \in [0,\sigma]$, since $\mu$ is $\phi^Z$-invariant,
$$
\left( \frac{1}{1+ D^2_t} \right) \circ\phi^Z_s \to 0 \text{\ \ \ in measure}.
$$
Since we can bound
$$
\left\lvert \int_0^{\sigma} \left( \frac{f \circ \phi^V_t}{1+ D^2_t }  \right) \circ \phi^Z_s  \diff s  \right\rvert \leq \norm{f}_{\infty} \int_0^{\sigma} \left\lvert  \left(\frac{1}{1+ D^2_t}\right)\circ\phi^Z_s  \right\rvert \diff s,
$$
in order to conclude, we apply the following type of dominated convergence result, whose proof is included in  \S\ref{app:technical_Lemma} (Appendix B) for completeness.
\begin{lemma}[see Appendix B]
\label{technical_lemma}
Let $\{ g_t \colon M \to \R\}_{t \in \R}$ be a family of smooth functions. Assume that the functions $g_t$ are uniformly bounded over all $t\in \R$, and that $g_t \circ \phi^Z_s$ converges to zero  in measure as $t\to \infty$,  for every fixed $s \in \R$. Then, for every $\sigma >0$, we have
$$
\int_0^{\sigma} \left\lvert g_t \circ \phi^Z_s (x) \right\rvert \diff s \to 0 \text{\ \ \ in measure for $t \to \infty$.}
$$
\end{lemma}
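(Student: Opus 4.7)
The proof is essentially routine, combining Fubini with a bounded convergence argument. The plan is as follows.

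First, I would reduce the hypothesis to a statement about $g_t$ alone. Since $Z$ lies in the center and $\phi^Z_s$ preserves the Haar measure $\mu$, convergence of $g_t \circ \phi^Z_s$ to zero in measure (for any fixed $s$) is equivalent to convergence of $g_t$ to zero in measure. Combined with the uniform bound $\|g_t\|_\infty \leq C$ for some $C>0$, a standard bounded convergence argument then yields
$$
\int_M |g_t| \diff \mu \to 0 \quad \text{as } t \to \infty.
$$
Indeed, for any $\varepsilon > 0$ and $E_t := \{|g_t|>\varepsilon\}$, one has $\int_M|g_t|\diff\mu \leq \varepsilon + C\,\mu(E_t)$, and $\mu(E_t)\to 0$ by assumption.

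Second, I would apply Fubini's theorem together with the $\phi^Z_s$-invariance of $\mu$ to obtain
$$
\int_M \left( \int_0^\sigma |g_t \circ \phi^Z_s(x)| \diff s \right) \diff \mu(x)
= \int_0^\sigma \left( \int_M |g_t \circ \phi^Z_s| \diff \mu \right) \diff s
= \sigma \int_M |g_t| \diff \mu,
$$
which tends to zero by the first step. In other words, the family of nonnegative functions $x \mapsto \int_0^\sigma |g_t \circ \phi^Z_s(x)| \diff s$ converges to zero in $L^1(M,\diff\mu)$.

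Finally, convergence in measure follows from Markov's inequality: for every $\eta > 0$,
$$
\mu \left\{ x \in M : \int_0^\sigma |g_t \circ \phi^Z_s(x)| \diff s \geq \eta \right\} \leq \frac{1}{\eta} \int_M \left( \int_0^\sigma |g_t \circ \phi^Z_s(x)| \diff s \right) \diff \mu(x) \leq \frac{\sigma}{\eta} \int_M |g_t| \diff \mu,
$$
which tends to zero as $t \to \infty$. No step presents a genuine obstacle; the only point requiring care is the initial observation that uniform boundedness upgrades convergence in measure to convergence in $L^1$, but this is classical.
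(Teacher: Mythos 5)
Your proof is correct, but it takes a genuinely cleaner route than the one in the paper. The paper works on the product space $M\times[0,\sigma]$ with the product measure $\nu$, introduces level sets $E_t^{\delta}$ and fibers $E_t^{\delta}(x)$, defines a family of ``bad'' sets $\text{Bad}_t^{\delta_1,\delta_2}=\{x: |E_t^{\delta_1}(x)|>\delta_2\}$, and runs a Fubini-plus-contradiction argument to show these have small measure before assembling the final estimate. You observe instead that $\phi^Z_s$-invariance of $\mu$ makes the hypothesis independent of $s$ (so it reduces to $g_t\to 0$ in measure), then upgrade this to $L^1$-convergence via uniform boundedness, compute via Fubini that the integrated family has $L^1$-norm equal to $\sigma\,\|g_t\|_{L^1}$, and finish with Markov's inequality. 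This exploits the measure invariance that the paper never uses (indeed the ``term in brackets'' in the paper's Lebesgue-theorem step is actually constant in $s$, something the paper does not notice). Both arguments are sound; yours is shorter and isolates the key structural fact, while the paper's would also work if $\mu$ were merely quasi-invariant or if the hypothesis really did depend nontrivially on $s$.
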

\noindent This concludes the estimate of the first term in the RHS.

\smallskip
\noindent 
\emph{Second term in the RHS of \eqref{eq:induction_case_2}.}
Integrating the second term on the right-hand side of \eqref{eq:induction_case_2} by parts, we obtain
$$
\begin{aligned}
\int_0^{\sigma} \left(\frac{D^2_t}{1+ D^2_t} \,\, f \circ \phi^V_t \right)  \circ \phi^Z_s  \diff s =&
\int_0^{\sigma} \left(\frac{D_t}{1+ D^2_t}\right)  \circ\phi^Z_s  \,\,  \left[   (f \circ \phi^V_t ) D_t  \right] \circ\phi^Z_s \diff s  \\
= &  \left[ \left(\frac{D_t}{1+ D^2_t}\right) \circ\phi^Z_{\sigma} \right] \int_0^{\sigma}  \left[(f \circ \phi^V_t ) D_t \right]  \circ \phi^Z_s  \diff s \\
- & \int_0^{\sigma} \frac{\diff }{\diff s}\left[ \left(\frac{D_t}{1+ D^2_t}\right) \circ\phi^Z_s \right]  \left[ \int_0^s  \left[( f \circ \phi^V_t)   D_t \right] \circ \phi^Z_r  \diff r\right]  \diff s .
\end{aligned}
$$
By \eqref{eq:int_along_gamma}, we then have the following identity:
$$
\begin{aligned}
\int_0^{\sigma} \left( \frac{D^2_t}{1+ D^2_t} \,\, f \circ \phi^V_t \right)\circ \phi^Z_s(x)  \diff s = &
\left[\left(\frac{D_t}{1+ D^2_t} \right)  \circ\phi^Z_{\sigma}(x) \right]  \left( \int_{\gamma^{\sigma}_{x,t}} f \hat V \right) \\
\\ & -  \int_0^{\sigma} \frac{\diff }{\diff s}\left[ \left(\frac{D_t}{1+ D^2_t}\right) \circ\phi^Z_s(x)\right]  \left( \int_{\gamma^s_{x,t}} f \hat V \right) \diff s\,.
\end{aligned}
$$
The flow $\phi^V$ is uniquely ergodic, hence we can assume that $f$ is a smooth $V$-coboundary, namely $f = V g$ for some smooth function $g$. 
Under this assumption  the terms $\int_{\gamma^s_{x,t}} f \hat V$ are uniformly bounded over all $t\in \R$ and $s\in [-1,1]$. In fact, we have
$$
\left\lvert \int_{\gamma^s_{x,t}} f \hat V \right\rvert \leq \left\lvert \int_{\gamma^s_{x,t}} \diff g \right\rvert + \left\lvert \int_{\gamma^s_{x,t}} Zg \hat Z \right\rvert \leq 2 \norm{g}_{\infty} + s \norm{Z g}_{\infty}.
$$
Thus we obtain
\begin{equation*}
\begin{split}
\left\lvert \int_0^{\sigma} \left(\frac{D^2_t}{1+ D^2_t}  \,\, f \circ \phi^V_t \right) \circ \phi^Z_s(x)  \diff s \right\rvert  \leq (2 \norm{g}_{\infty} + \sigma \norm{Z g}_{\infty} ) &  \left( \left\lvert \left(\frac{D_t}{ 1+ 
 D^2_t} \right)\circ\phi^Z_{\sigma}(x) \right\rvert +  \right.\\& \left.\qquad   \int_0^{\sigma} \left\lvert \frac{\diff }{\diff s} \left[ \left(\frac{D_t}{1+ D^2_t} \right)\circ\phi^Z_s(x)\right]\right\rvert  \diff s \right).
\end{split}
\end{equation*}
Since by Proposition~\ref{prop:D_t_grows}
$$
\left\lvert \left(\frac{D_t}{1+ D^2_t} \right) \circ\phi^Z_{\sigma} \right\rvert \to 0 \text{\ \ \ in measure},
$$
it remains to estimate the term
\begin{equation*}
\begin{split}
\int_0^{\sigma} \left\lvert \frac{\diff }{\diff s} \left[  \left(\frac{D_t}{1+ D^2_t} \right)\circ\phi^Z_s(x)\right] \right\rvert \diff s & \leq \norm{\frac{1- D^2_t}{1+ D^2_t}}_{\infty}\int_0^{\sigma} \left\lvert  \left(\frac{ZD_t}{1+ D^2_t} \right) \circ\phi^Z_s(x)  \right\rvert \diff s \\
& \leq \int_0^{\sigma} \left\lvert  \left(\frac{ZD_t}{1+ D^2_t} \right) \circ\phi^Z_s(x)  \right\rvert \diff s.
\end{split}
\end{equation*}
\noindent We cannot directly apply Lemma~\ref{technical_lemma} (and therefore verify the assumptions of Lemma~\ref{lemma:correlationseasy}) because the integrand functions might not be uniformly bounded. 
We will require here the full strength of Lemma~\ref{lemma:correlations} and verify that we can apply it by showing that, for every $\varepsilon >0$, there exists $0< \sigma <1$ such that, for every $\eta>0$, there exists $T >0$ such that, for every $t\geq T$ we have that, for any $ s\in [0, \sigma]$, 
\begin{equation}\label{eq:remaintoprove}
\mu \left( x \in M : \int_0^{s} \left\lvert  \left( \frac{Z D_t }{1+ D^2_t}\right) \circ\phi^Z_r(x)  \right\rvert \diff r \geq \eta \right) \leq \varepsilon.
\end{equation}
\noindent Let $\varepsilon > 0$. Let $\Delta(\alpha)$, $d_m$ be as in Lemma~\ref{lemma:D_t_comparison}, and choose $0< \delta < 1$ such that $\Delta(\alpha) 
\delta^{d_m} < \varepsilon/3$. Let $\sigma_0=\sigma_0(\delta)>0$ and $C_\delta'(\alpha)$ be given by Part $(ii)$ of Lemma~\ref{lemma:distortion}. Take $\sigma=\sigma_0$ 
and fix any $\eta >0$. 
By Proposition~\ref{prop:D_t_grows}, we may apply Lemma~\ref{technical_lemma} to the family of functions $g_t = \frac{ D_t}{1+ D^2_t}$ and conclude that there exists $T>0$ such that for all $t \geq T$ we have 
$$
\mu \left( x \in M : \int_0^{\sigma} \left\lvert  \left(\frac{ D_t }{1+ D^2_t}\right) \circ\phi^Z_s(x) \right\rvert \diff s \geq \frac{\delta \eta}{C_X'(\alpha)} \right) \leq \frac{\varepsilon}{3}.
$$
Furthermore, by Lemma~\ref{lemma:sumD_t_grows}, we obtain that, for a possibly larger $T>0$, for all $t \geq T$ we also have
$$
\mu \left( x \in M : \sum_{{\bf v}\in \Lambda^\ast}|D^{{\bf v}}_t(x)| <C'_\delta(\alpha) \right) \leq \frac{\varepsilon}{6}.
$$
Thus, the assumptions of Part $(ii)$ of Lemma~\ref{lemma:distortion} hold and, by the choice of $\delta$, we deduce that, outside a set of measure $\leq \Delta(\alpha) \delta^{d_m} +\varepsilon/6 < 2\varepsilon/3$, for any $s \in [0,\sigma]$ we have
$$
\begin{aligned}
\int_0^{s} \left\lvert \left( \frac{Z D_t }{1+ D^2_t} \right) \circ\phi^Z_r(x) \right\rvert \diff r &\leq \frac{C_X'(\alpha)}{\delta} \int_0^{s} \left\lvert \left( \frac{D_t }{1+ D^2_t} \right) \circ\phi^Z_r(x) \right\rvert \diff r \\ &\leq \frac{C_X'(\alpha)}{\delta} \int_0^{\sigma} \left\lvert  \left( \frac{ D_t }{1+ D^2_t} \right) \circ\phi^Z_s(x) \right\rvert \diff s.
\end{aligned}
$$
Therefore, we conclude that for every $s \in [0, \sigma_0]$, 
$$
\mu \left( x \in M : \int_0^{s} \left\lvert \left( \frac{ ZD_t }{1+ D^2_t} \right) \circ\phi^Z_r(x) \right\rvert \diff r \geq \eta \right) \leq \frac{2\varepsilon}{3} + \frac{\varepsilon}{3} \leq \varepsilon.
$$
This proves the bound in formula~\eqref{eq:remaintoprove} and hence by Remark~\ref{rk:mixingconclusion} completes the proof.

\subsection{Final arguments}\label{sec:final}
We can now  combine the two cases to conclude the proof of Theorem~\ref{thm:main}.

\begin{proof}[Proof of Theorem~\ref{thm:main}]
 Let  $\mathcal{T}_{M,X}$  be the tower of Heisenberg extensions for $M$ based at $X$ fixed at the beginning of this Section~\ref{sec:proof}, whose existence is guaranteed by Corollary \ref{corollary:existence_of_towers}.  
The dense set of smooth time-changes we want to consider is the class  $\mathscr{P}_{\mathcal{T}}$ of trigonometric polynomial with  respect to the tower $\mathcal{T}_{M,X}$. 
By Remark \ref{remark:independent}, $\mathscr{P}_{\mathcal{T}}$ can be taken independent of $X$.
This class is dense in $\mathscr{C}^\infty(M)$ by Corollary~\ref{cor:density}.   
Let  $\alpha \in \mathscr{P}_{\mathcal{T}}$  and assume that it is  \emph{not} measurably trivial. The theorem will be  proved if we show that the corresponding time-change is mixing.  
 
Let $\alpha^{(0)}:=\alpha $ and let  $ \alpha^{(i)}$, for $0\leq i \leq h$, be its projections along the tower $\mathcal{T}_{M,X}$ (defined in \S\ref{sec:decomposition}). 
We claim that there exists a $0\leq i_0\leq h-1$ such that $ (\alpha^{(i_0)})^\perp$ is \emph{not} a coboundary for $X^{(i_0)}$. Indeed, assume by contradiction that  $ (\alpha^{(i)})^\perp$ is a coboundary for $X^{(i)}$ for every $0\leq i\leq h-1$. 
Recall that the base of the tower $M^{(h)}$ is a torus and that the induced vector field $X^{(h)}$ generates a completely irrational linear toral flow. By definition of the class of time-changes, the function $\alpha^{(h)}$ is a trigonometric polynomial on the torus $M^{(h)} = \mathbb{T}^k$ in the classical sense. It is a standard result that $\alpha^{(h)}$ is an almost coboundary (i.e., cohomologous to a constant)  for the linear flow generated by $X^{(h)}$. Hence, its pull-back on $M^{(h-1)}$ is an almost coboundary  for $X^{(h-1)}$. Since we are assuming that $ (\alpha^{(h-1)})^\perp$ is also a measurable coboundary for $X^{(h-1)}$, we deduce that $\alpha^{(h-1)}$ is a measurable almost coboundary for $X^{(h-1)}$. Proceeding in the same way for all $i = h-2, \dots, 0$, we conclude that $\alpha^{(0)}=\alpha$ is an almost coboundary for $X$, in contradiction with the original assumption.

Let  $0\leq i_0\leq h-1$ be \emph{minimal} $i$ such that $ (\alpha^{(i)})^\perp$ is \emph{not} a coboundary for $X^{(i)}$. Applying Proposition \ref{noncoboundary_case} (the \emph{not} coboundary case) to $ {(\alpha^{(i_0)})}^\perp$, we have that the flow on $M^{(i_0)}$ generated by $V^{(i_0)} = \frac{1}{\alpha^{(i_0)} } X^{(i_0)}$ is mixing.
By definition of $i_0$, each $ (\alpha^{(i)})^\perp$, for $0 \leq i \leq i_0-1$, is a coboundary w.r.t.~$X^{(i)}$. Applying now Proposition \ref{coboundary_case} (the coboundary case) to all $i = i_0-1, \dots, 0$, we get in $i_0$ steps that the flow on $M$ generated by $\frac{1}{\alpha} X = V$  is mixing. 
\end{proof}

\section{Appendix A: Proof of Lemma \ref{lemma:Gottschalk-Hedlund}}
\label{sec:Appendix_B}

\begin{proof}[Proof of Lemma \ref{lemma:Gottschalk-Hedlund}]
We notice that we can rewrite
\begin{equation*}
\begin{split}
\frac{1}{T} \int_0^T \mu_\beta \{ |F_t| < C\} \diff t &= \frac{1}{T} \int_0^T \left( \int_M  \one_{(-C,C)} \circ F_t(x) \diff \mu_\beta \right) \diff t \\
&= \int_M \left( \frac{1}{T} \int_0^T \one_{(-C,C)} \circ F_t(x) \diff t \right) \diff \mu_\beta, 
\end{split}
\end{equation*}
therefore, by the Dominated Convergence Theorem, it is sufficient to show that the function $\frac{1}{T} \int_0^t \one_{(-C,C)} \circ F_t(x) \diff t$ converges pointwise to zero.

For all $x \in M$, denote by $\nu_{T,x}$ the probability measure on $M \times \R$ supported on the parametrized curve $t \mapsto (\phi^U_t(x),F_t(x))$, for $t \in [0,T]$, defined by
$$
\nu_{T,x}(A \times [a,b]) = \frac{1}{T} \text{Leb} \left( t \in [0,T] : \phi^U_t(x) \in A \text{ and } F_t(x) \in [a,b] \right), 
$$
for $A \subset M$ and $[a,b] \subset \R$.
We will prove that, for all $x \in M$, $\nu_{T,x}$ converges weakly to~$0$.

Suppose on the contrary that there exist $\overline{x} \in M$ and a strictly increasing sequence $T_n \to \infty$ such that $\nu_{T_n, \overline{x}}$ converges weakly to a measure $\nu$ with non-zero total mass.
We claim that $\nu$ is $\Psi_t$-invariant, where $\Psi_t(x,s) = (\phi_t^U(x), s+F_t(x))$. Indeed, for every continuous function $g$ on $M \times \R$ we have
\begin{equation*}
\begin{split}
\int_{M \times \R} g \diff(\Psi_t)_{\ast} \nu_{T_n,\overline{x}} &= \int_{M \times \R} g \circ \Psi_t \diff \nu_{T_n,\overline{x}} =  \int_{M \times \R} g (\phi_t^U(\overline{x}), s+F_t(\overline{x})) \diff \nu_{T_n,\overline{x}} \\
&= \frac{1}{T_n} \int_0^{T_n} g \left(\phi_{\tau}^U \circ \phi_{t}^U(\overline{x}), s+F_{t}(x) + F_{\tau}(\phi_{t}^U(\overline{x})) \right) \diff \tau 
\end{split}
\end{equation*}
By the cocycle relation \eqref{eq:cocycle_relation} and by definition of $\mu_{T_n,\overline{x}}$ we obtain
\begin{equation*}
\begin{split}
& \left\lvert \int_{M \times \R} g \diff(\Psi_t)_{\ast} \nu_{T_n,\overline{x}} - \int_{M \times \R} g \diff\nu_{T_n,\overline{x}}  \right \rvert \\
& \quad = \frac{1}{T_n} \left\lvert \int_0^{T_n} g \left(\phi_{t+\tau}^U(\overline{x}), s+F_{t+\tau}(\overline{x}) \right)  \diff \tau -  \int_0^{T_n} g \left(\phi_{\tau}^U(\overline{x}), s+F_{\tau}(\overline{x}) \right)  \diff \tau \right \rvert \leq \frac{2 \norm{g}_{\infty} t}{T_n}.
\end{split}
\end{equation*}
Therefore, $\lim_{n \to \infty} (\Psi_t)_{\ast} \nu_{T_n,\overline{x}} = \nu$, and the claim follows from the continuity of $(\Psi_t)_{\ast}$.

Let $\widehat{\nu}$ be an ergodic component of $\nu$. 
By unique ergodicity of $\phi^X$, and hence of $\phi^U$, we have that $\pi_{\ast}\widehat{\nu} = \mu_{\beta}$, where $\pi \colon M \times \R \to M$ is the projection onto the nilmanifold. 
In particular, for almost every $x \in M$ there exists a point $(x,s) \in M\times \R$ which is generic for $\widehat{\nu}$. 
Assume that there exists a fiber $\{x\} \times \R$ over $M$ with more than one generic point, that is, assume that the points $(x,s)$ and $(x,s+r)$ are both generic for  $\widehat{\nu}$. 
This implies that  $\widehat{\nu}$ is $T_r$-invariant, where $T_r$ denotes the vertical translation on the fibers by $r$:
for any continuous, compactly supported function $g \in \mathscr{C}_c(M \times \R)$, we have
$$
\frac{1}{T} \int_0^t g \circ \Psi_t (x,s+r) \diff t \to \int_{M \times \R} g \diff \widehat{\nu},
$$
but also
\begin{equation*}
\begin{split}
&\frac{1}{T} \int_0^t g \circ \Psi_t \circ T_r (x,s) \diff t = \frac{1}{T} \int_0^t g \circ T_r \circ \Psi_t (x,s) \diff t \\
& \quad \to \int_{M \times \R} g \circ T_r  \diff \widehat{\nu} = \int_{M \times \R} g\diff (T_r)_{\ast} \widehat{\nu}.
\end{split}
\end{equation*}
Since $g$ was arbitrary, we deduce $\widehat{\nu} = (T_r)_{\ast} \widehat{\nu}$. As $\widehat{\nu}$ is a finite measure, we must have $r = 0$, namely for almost every $x \in M$ there exists only one point $(x, u(x))\in M \times \R$ which is generic for $\widehat{\nu}$.
The function $u \colon x \mapsto u(x)$ implicitly defined above is measurable, since its graph is a measurable set.
Uniqueness implies that
$$
\Psi_t(x,u(x)) = \left(\phi_t^U(x), u(x) + F_t(x) \right) = \left(\phi_t^U(x), u(\phi_t^U(x)) \right),
$$
from which we deduce $u (\phi_t^U(x)) - u(x) = F_t(x)$, in contradiction with the assumption that $f$ is not a measurable coboundary for $U$.
\end{proof}

\section{Appendix B: Proof of Lemma~\ref{technical_lemma}}
\label{app:technical_Lemma}

\begin{proof}[Proof of Lemma~\ref{technical_lemma}]
Let us fix $\sigma > 0$. Denote by $\nu$ the product measure $\diff \nu = \diff \mu \, \diff r$ on ${\hat M} := M \times [0,\sigma]$. 
We first claim that $g_t \circ \phi^Z_s $ converges to zero in measure on ${\hat M} $. 
Define 
$$
E_t^{\delta} := \{ (x,s) \in {\hat M}  : | g_t\circ \phi^Z_s (x)| > \delta \}.
$$
We have 
$$
\nu (E_t^{\delta}) = \int_{\hat M}  \one_{E_t^{\delta}}(x,s) \diff \nu = \int_0^{\sigma} \left( \int_M \one_{E_t^{\delta}}(x,s) \diff \mu(x) \right) \diff s.
$$
By assumption, the term in brackets converges to zero for all $s \in [0,\sigma]$, hence, by Lebesgue Theorem, $\nu(E_t^{\delta}) \to 0$.

Let $E_t^{\delta}(x) := E_t^{\delta} \cap \{x\} \times [0,\sigma] = \{ s \in [0,\sigma] : | g_t\circ \phi^Z_s (x) | > \delta \}$ and denote by $|E_t^{\delta}(x)|$ its measure. 
Define also 
$$
\text{Bad}^{\delta_1, \delta_2}_t := \{ x \in M : | E_t^{\delta_1}(x) | > \delta_2 \}.
$$
We claim that for all $\delta_1, \delta_2 >0$, $\mu (\text{Bad}^{\delta_1, \delta_2}_t ) \to 0$.
If this was not the case, there would exist $\delta_1, \delta_2, \eta>0$ and an increasing sequence $t_n \to \infty$ such that $\mu(\text{Bad}^{\delta_1, \delta_2}_{t_n} )\geq \eta$ for all $n \in \N$. This would imply that for all $n \in \N$, by Fubini's Theorem,
$$
\nu( E_{t_n}^{\delta_1}) = \int_M | E_{t_n}^{\delta_1}(x) | \diff \mu (x) \geq \int_{\text{Bad}^{\delta_1, \delta_2}_{t_n}} | E_{t_n}^{\delta_1}(x) | \diff \mu (x) \geq \delta_2 \cdot \eta,
$$
in contradiction with $\nu(E_t^{\delta_1}) \to 0$.

Let $C>0$ be such that $\norm{g_t}_{\infty} \leq C$. Fix $\varepsilon, \eta >0$ and choose $\delta_1, \delta_2>0$ such that $C\delta_1 + \sigma \delta_2 < \eta$. Let $T >0$ be such that $\mu (\text{Bad}^{\delta_1, \delta_2}_t ) < \varepsilon$ for all $t \geq T$. 
For any $t \geq T$ and for any $x \notin \text{Bad}^{\delta_1, \delta_2}_t$, we have $| E_t^{\delta_1}(x) | \leq \delta_2$; hence 
\begin{equation*}
\begin{split}
\int_0^{\sigma}\left\lvert g_t\circ \phi^Z_s (x) \right\rvert \diff s &\leq \int_0^{\sigma} \one_{E_t^{\delta_1}(x)}(s) \left\lvert g_t\circ \phi^Z_s (x) \right\rvert \diff s + \int_0^{\sigma} \one_{(E_t^{\delta_1}(x))^c}(s) \left\lvert g_t\circ \phi^Z_s (x) \right\rvert \diff s \\
& \leq C \delta_2 + \sigma \delta_1 < \eta.
\end{split}
\end{equation*}
Therefore,
$$
\mu \left( x \in M :  \int_0^{\sigma} \left\lvert g_t\circ \phi^Z_s (x) \right\rvert \diff s > \eta \right) \leq \mu \left( \text{Bad}^{\delta_1, \delta_2}_t \right) < \varepsilon,
$$
which concludes the proof.
\end{proof}

\subsection*{Acknowledgement}
The authors would like to thank Livio Flaminio for useful discussions and an anonymous referee for his/her comments and questions on an earlier version of this manuscript. We thank the referee also for suggesting some stronger variants of some of the algebraic Lemmas, in particular of Lemma~\ref{thm:Heisenberg_triple}.
For their hospitality  at the very initial stages of this project, C.U.~thanks the Department of Mathematics of the University of Maryland and both A.A.~and C.U.~acknowledge the Hausdorff Institute for Mathematics in Bonn for hosting them during  the program  {\it Geometry and Dynamics of Teichm{\"u}ller Space}. 
G.F.~thanks the Department of Mathematics of the University of Bristol, UK, for its hospitality
during the final stages of preparation of the paper. G.F.~acknowledges support of NSF grant DMS 1600687. 
C.U.~and D.R.~acknowledge the ERC Grant {\it ChaParDyn} and the Leverhulme Trust for the funding which made this research possible. 
C.U. is also supported 
by the Royal Society through a Wolfson Research Merit Award.  
The research leading to these results has received funding from the European Research Council under the European Union
Seventh Framework Programme (FP/2007-2013) / ERC Grant Agreement n. 335989.

 \end{document}